\newtheorem{theorem}{Theorem}[section]
\newtheorem{lemma}[theorem]{Lemma}
\newtheorem{corollary}[theorem]{Corollary}
\newtheorem{conjecture}[theorem]{Conjecture}
\newenvironment{proof}[1][Proof]{\begin{trivlist}
\item[\hskip \labelsep {\bfseries #1}]}{\end{trivlist}}
\newenvironment{remark}[1][Remark]{\begin{trivlist}
\item[\hskip \labelsep {\bfseries #1}]}{\end{trivlist}}
\newcommand{\qed}{\nobreak \ifvmode \relax \else
      \ifdim\lastskip<1.5em \hskip-\lastskip
      \hskip1.5em plus0em minus0.5em \fi \nobreak
      \vrule height0.75em width0.5em depth0.25em\fi}
\renewcommand{\Re}{\operatorname{Re}}
\renewcommand{\Im}{\operatorname{Im}}
\begin{document}
\title{On Riemann zeroes, Lognormal Multiplicative Chaos, and Selberg Integral}

\author{Dmitry Ostrovsky}

\date{} 

\maketitle
\noindent

\begin{abstract}
\noindent Rescaled Mellin-type transforms of the exponential functional of 
the Bourgade-Kuan-Rodgers statistic of Riemann zeroes are conjecturally related
to the distribution of the total mass of the limit lognormal stochastic measure of Mandelbrot-Bacry-Muzy. The conjecture implies that a non-trivial, log-infinitely divisible probability distribution is associated with Riemann zeroes. For application, integral moments, covariance structure, multiscaling spectrum, and asymptotics associated with the exponential functional are computed in closed form using the known meromorphic extension of the Selberg integral. 
\end{abstract}

\noindent {\bf Keywords:} Riemann zeroes, multiplicative chaos, Selberg integral, multifractal stochastic measure, gaussian free field, infinite divisibility, double gamma function.

\noindent {\bf Mathematics Subject Classification (2010):} 11M26, 33B15, 60E07, 60G15, 60G57.

\section{Introduction}
\noindent In this paper we contribute to the literature on the statistical distribution of Riemann zeroes in the mesoscopic regime. The study of the values of the Riemann zeta function in the mesoscopic regime was pioneered by Selberg \cite{Selberg44}, \cite{Selberg46} and then extended to the distribution of zeroes by Fujii \cite{Fujii}, Hughes and Rudnick \cite{HR}, Bourgade \cite{Bourgade10}, and most recently by Bourgade and Kuan \cite{BK}, Rodgers \cite{Rodg}, and Kargin \cite{Kar}.
Assuming the Riemann hypothesis (except for Selberg), these authors rigorously established various central limit theorems for the distribution of Riemann zeroes. 
The principal technical tools that were used to obtain these theorems were Selberg's formula for $\zeta'/\zeta,$ explicit formulas of Guinand and Weil, and certain moment calculations. Alternatively, beginning with the seminal work of Montgomery \cite{Mont}, a great deal of progress has been made in formulating precise conjectures
about the statistical distribution of the zeroes. These conjectures are all motivated by the empirical fact that the statistical properties of the zeroes are very close to
those of eigenvalues of large Hermitian matrices with independent entries, \emph{i.e.} the so-called GUE matrices, up to small arithmetic corrections,
and calculations are typically justified by means of semi-classical methods for quantum chaotic systems, Keating-Snaith philosophy of modeling the value distribution
of the Riemann zeta function on the critical line by the characteristic polynomials of certain large random matrices, and conjectural forms of the approximate 
functional equation for the zeta function. 
For example, Berry \cite{Berry} calculated the GUE term and arithmetic corrections for the number variance, Bogomolny and Keating \cite{BoKe} did the same for the pair correlation function, which was later extended to multiple-point correlations by Conrey and Snaith \cite{CS} and Bogomolny and Keating \cite{BoKe2}, 
Keating and Snaith \cite{KS} calculated the moments of the zeta function on the critical line, Conrey \emph{et. al.} \cite{Conreyetal} formulated the ratios conjecture for its
average values, Farmer \emph{et. al.} \cite{Fetal} and Fyodorov and Keating \cite{YK} estimated the magnitude of its extreme values on the critical line, to name a few.

In this paper we conjecture a mod-gaussian limit theorem associated with the distribution of Riemann zeroes in the mesoscopic regime by combining the approach of Bourgade and Kuan with our previous work on the limit lognormal stochastic measure (also known as lognormal multiplicative chaos) and Selberg integral.
Bourgade and Kuan and Rodgers independently proved
that a class of linear statistics of Riemann zeroes converge to gaussian vectors and, most importantly, computed 
the covariance of the limiting vector explicitly. The starting point of our approach is that this limiting gaussian vector approximates the centered gaussian free field when the statistic is based on a smoothed indicator function of subintervals of the unit interval, and
it approximates the gaussian free field plus an independent gaussian random variable when the statistic is based on a smoothed indicator function of subintervals of a particular
unbounded interval. The limit lognormal measure is defined as a limit of the exponential functional of the gaussian free field, 
hence by taking the Mellin transform\footnote{It is more natural to define the Mellin transform as $\int_0^\infty
x^q\,f(x)\,dx$ as opposed to the usual $\int_0^\infty x^{q-1}\,f(x)\,dx$ for our purposes.} of 
the exponential functional of the Bourgade-Kuan-Rodgers statistic in an appropriately rescaled limit, we conjecturally 
obtain the Mellin transform of the total mass of the measure. In a series of papers, cf. \cite{Me2}, \cite{Me3}, \cite{Me4}, \cite{MeIMRN}, we investigated the total mass and made a precise conjecture about its probability distribution.\footnote{The terms
``probability distribution'', ``law'', and ``random variable'' are used
interchangeably in this paper.} The positive integral moments of the total mass are known to be given by the classical Selberg integral. In \cite{Me4} we rigorously constructed a probability distribution
(called the Selberg integral distribution) whose $n$th moment coincides with the Selberg integral of dimension $n$ and conjectured that this distribution is the same as the distribution of the total mass. Thus, the main result of this paper is a conjecture that  particular rescaled limits of two Mellin-type transforms of the exponential functional of the Bourgade-Kuan-Rodgers statistic corresponding to a smoothed indicator function of certain bounded or unbounded intervals 
coincide with the Mellin transform of the Selberg integral distribution. In \cite{MeIMRN}, \cite{Me13},
\cite{Me14} we established many properties of the Mellin transform so that we can make a number of precise statements about our rescaled limit as corollaries of the main conjecture. The type of rescaling and convergence that we use in this paper is closely related to the rescaling that was used by Keating and Snaith \cite{KS} and 
Nikeghbali and Yor \cite{NikYor}, formalized by Jacod \emph{et. al.} \cite{Jacod} in their theory of mod-gaussian convergence, and significantly extended in recent publications
of Feray \emph{et. al.} \cite{Feray} and M\'eliot and Nikeghbali \cite{Meliot}.

The main technical innovation of our work is the explicit use of the gaussian free field, limit lognormal measure, Selberg integral, and Selberg integral distribution in the context of the statistical distribution of Riemann zeroes. We note that the Selberg integral and Selberg integral distribution previously
appeared, respectively, in conjectures of Keating and Snaith \cite{KS} about the moments and of Fyodorov and Keating \cite{YK} about extreme values of the Riemann zeta function on the critical line. These conjectures are based on the analogy between the value distribution of $\zeta(1/2+it)$ and that of the characteristic polynomials of certain large random matrices. Our conjecture deals instead with the zeroes of the zeta function on the critical line and is based on the 
convergence of particular statistics of the zeroes to the gaussian free field or its centered version.  In particular, we prove that our statistics or, equivalently, particular integrals of $\Im\log \zeta(1/2+it)$ along the critical line, exhibit logarithmic correlations and calculate the corresponding covariances explicitly. We believe that these calculations are new.\footnote{The idea that $\Im\log \zeta(1/2+it)$ is logarithmically correlated is not new. Keating and Snaith \cite{KS} and later Farmer \emph{et. al.} \cite{Fetal} argued that $\Im\log \zeta(1/2+it)$ can be modeled
by the imaginary part of the logarithm of the characteristic polynomial of CUE matrices, and Hughes \emph{et. al.} \cite{Hughesetal} proved that the latter
is logarithmically correlated, thereby conjecturing the same about $\Im\log \zeta(1/2+it).$ The novelty of our work is the computation of 
the logarithmic covariance structure of $\Im\log \zeta(1/2+it)$ from first principles.}

The limit lognormal measure was introduced and reviewed by Mandelbrot \cite{secondface}, \cite{Lan} in the context of mathematical modeling of intermittent turbulence and constructed explicitly by Bacry and Muzy \cite{MRW},
\cite{BDM}, \cite{BM1}, \cite{BM}. Its existence and basic properties follow from the general theory of multiplicative chaos of Kahane \cite{K2}. This measure is of significant interest in mathematical physics as it naturally appears in a wide spectrum of problems ranging from conformal field theory \cite{BenSch}, \cite{RV1} and two-dimensional quantum gravity \cite{DS}, to 
statistical mechanics of disordered energy landscapes \cite{YO}, \cite{FLDR},  \cite{FLDR2}, \cite{YK}, to name a few. A periodized version of the limit lognormal measure appears in a random energy model \cite{FyoBou} and in the theory of conformal weldings 
\cite{Jones}. We also mention a multidimensional extension of the measure \cite{RV2} and a recent construction of the critical lognormal multiplicative chaos in \cite{dupluntieratal} and \cite{barraletal}. 
One of the most remarkable properties of this measure is that it is stochastically self-similar with lognormal multipliers (hence its name), so that the moments of its total mass do not scale linearly but rather quadratically, \emph{i.e.} the measure is multifractal. 
The aforementioned problems in mathematical physics all exhibit multifractal behavior so that the significance of our conjecture extends beyond the distribution of Riemann zeroes \emph{per se} for it suggests that the phenomenon of multifractality might have a number theoretic origin in the sense that the distribution of Riemann zeroes (conjecturally) provides a natural model for such phenomena. 

We do not have a mathematically rigorous proof of our conjecture and provide instead some exact calculations (a "physicist's proof") that explain how we arrived at it. If one assumes the conjecture to be true, the resulting corollaries are mathematically rigorous and their proofs can be found in \cite{MeIMRN} and \cite{Me14}.

The plan of this paper is as follows. In Section 2 we give a brief review of 
the key results of Bourgade and Kuan and Rodgers and then state our conjecture and its implications.
This section does  not require any knowledge of the limit lognormal measure.
In Section 3 we review the limit lognormal measure and the Selberg integral distribution. In Section 4 we present a heuristic derivation of our conjecture.
Conclusions are given in Section 5.

\section{Results}
\noindent We begin this section with a brief description of the statistic of Riemann zeroes that was introduced by Bourgade and Kuan \cite{BK} and Rodgers \cite{Rodg} (henceforth referred to as the BKR statistic), following the 
approach and notations of Bourgade and Kuan. The Riemann zeta function is defined\footnote{We will use the symbol $\triangleq$ to mean that the left-hand side is defined by the right-hand side.} by
\begin{equation}
\zeta(s) \triangleq \sum\limits_{m=0}^\infty (m+1)^{-s}, \, \Re(s)>1,
\end{equation}
and is continued analytically to the complex plane having a simple pole at $s=1.$ Its non-trivial zeroes are known to be located in the critical strip $0<\Re(s)<1$ and, according to the Riemann hypothesis, are thought to lie on the critical line $\Re(s)=1/2,$ cf. \cite{Titchmarsh} for details. 
Assuming the Riemann hypothesis, we write non-trivial zeroes of the Riemann zeta function in the form $\{1/2+i\gamma\},$
$\gamma\in\mathbb{R}.$ Let $\lambda(t)$ be a function of $t>0$ that satisfies
the asymptotic condition
\begin{equation}\label{lt}
1\ll \lambda(t) \ll \log t
\end{equation}
in the limit $t\rightarrow \infty,$ where the number theoretic notation $a(t)\ll b(t)$ means  $a(t)=o\bigl(b(t)\bigr).$
Let $\omega$ denote a uniform random variable over $(1, 2),$ $\gamma(t) \triangleq \lambda(t) (\gamma-\omega t),$ 
and define the statistic
\begin{equation}\label{St}
S_t(f) \triangleq \sum\limits_{\gamma} f\bigl(\gamma(t)\bigr) -\frac{\log t}{2\pi \lambda(t)} \int f(u) du
\end{equation}
given a test function $f(x).$ The class of test functions $\mathcal{H}^{1/2}$ that was considered in \cite{BK} is primarily defined by the condition $\langle f, \,f\rangle<\infty,$ where 
\begin{align}
\langle f,\,g\rangle \triangleq & \Re\int |w| \hat{f}(w)\overline{\hat{g}(w)}\,dw, \label{scalarf} \\
  = & -\frac{1}{2\pi^2} \int f'(x) g'(y)\log|x-y|dx\,dy \label{scalar}
\end{align}
plus some mild conditions on the growth of $f(x)$ and its Fourier transform 
$\hat{f}(w) \triangleq 1/2\pi \int f(x) e^{-iwx} \, dx$
at infinity and a bounded variation condition (that are satisfied by compactly supported $\mathcal{C}^2$ functions, by example). We note that $S_t(f)$ is centered\footnote{Centered means that its expectation is zero. All expectations, covariances, etc in this section are with respect to the distribution of $\omega.$}
in the limit $t\rightarrow \infty$ as it is well known that the number of Riemann zeroes in the interval $[t,\,2t]$ is asymptotic to $t\log t/2\pi$ in this limit. The principal
results of \cite{BK} and \cite{Rodg}\footnote{Rodgers considered a more restrictive class of test functions and stated the formula for the variance only.}
and the starting point of our construction are the following theorems.
\begin{theorem}[Convergence to a gaussian vector]\label{strong}
Given test function $f_1,$ $\cdots$ $f_k$ in $\mathcal{H}^{1/2},$
the random vector $\bigl(S_t(f_1),\cdots, S_t(f_k)\bigr)$ converges in law
in the limit $t\rightarrow\infty$ to
a centered gaussian vector $\bigl(S(f_1),\cdots S(f_k)\bigr)$ having the covariance 
\begin{equation}
{\bf Cov} \bigl(S(f_i), \,S(f_j)\bigr) = \langle f_i,\,f_j\rangle.
\end{equation}
\end{theorem}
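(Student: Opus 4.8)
The plan is to establish Theorem~\ref{strong} by reducing it, via the explicit formula for $\zeta'/\zeta$, to a statement about Dirichlet polynomials over primes, and then proving a central limit theorem for those polynomials by the method of moments. First I would recall the Guinand--Weil explicit formula: for a suitable test function $g$ whose Fourier transform is compactly supported (or decays rapidly), the sum $\sum_\gamma g(\gamma)$ equals an archimedean term (the main counting term, which the centering in \eqref{St} is designed to cancel) plus a term of the shape $-\frac{1}{2\pi}\sum_{p}\sum_{m\geq 1}\frac{\log p}{p^{m/2}}\bigl(\hat g(\tfrac{m\log p}{2\pi})+\hat g(-\tfrac{m\log p}{2\pi})\bigr)$, coming from the nontrivial zeros through the functional equation and the Euler product. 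Applying this with $g(\cdot)=f\bigl(\lambda(t)(\cdot-\omega t)\bigr)$, the rescaling turns each $\gamma$-translate into a factor $e^{-i\omega t\,\xi}$ times $\hat f(\xi/\lambda(t))$ evaluated at $\xi = m\log p$; because $\lambda(t)\to\infty$, only primes with $\log p \ll \lambda(t)$, i.e. $p \ll e^{\lambda(t)}$, contribute non-negligibly, and one is left with analysing $X_t(f) \triangleq \Re\sum_{p\ll e^{\lambda(t)}} \frac{\log p}{\sqrt p\,\log t}\,\hat f\!\bigl(\tfrac{\log p}{\lambda(t)}\bigr)\,p^{-i\omega t}$ (plus lower-order prime-power terms and an error from truncating the explicit formula, which must be controlled under RH and the hypothesis $\lambda(t)\ll\log t$).

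The second step is a moment computation for the multivariate object $\bigl(X_t(f_1),\dots,X_t(f_k)\bigr)$. Since $\omega$ is uniform on $(1,2)$ and $t\to\infty$, the family $\{p^{-i\omega t}\}_p$ behaves, in the relevant range of primes, like independent uniform points on the unit circle: $\mathbf{E}[p^{-i\omega t} q^{i\omega t}]$ is $1+O(1/(t\log(p/q)))$ when $p=q$ and small otherwise. I would therefore compute $\mathbf{E}\bigl[\prod_i X_t(f_i)^{a_i}\bigr]$ by expanding, pairing the $p^{\pm i\omega t}$ factors, and showing the pairings reproduce exactly the Gaussian (Wick) combinatorics with covariance given by $\lim_t \sum_p \frac{(\log p)^2}{p(\log t)^2}\,\hat f_i(\tfrac{\log p}{\lambda(t)})\overline{\hat f_j(\tfrac{\log p}{\lambda(t)})}$. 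Converting the sum over primes to an integral via the prime number theorem ($\sum_{p\leq x}\frac{(\log p)^2}{p}\sim \frac{(\log x)^2}{2}$, more precisely $\sum_p \frac{(\log p)^2}{p}\phi(\tfrac{\log p}{\lambda})\sim \lambda^2\int_0^\infty u\,\phi(u)\,du$ after the substitution $u=\log p/\lambda$, using $\log x\sim\lambda$ at the cutoff scale so that $(\log t)^2$ in the denominator matches $\lambda^2\cdot(\log t/\lambda)^2$... here one must track the normalisation carefully), the limiting covariance becomes $\int_0^\infty u\,\hat f_i(u)\overline{\hat f_j(u)}\,du$ up to symmetrisation, which one identifies with $\langle f_i,f_j\rangle = \Re\int|w|\hat f_i(w)\overline{\hat f_j(w)}\,dw$ by parity of the integrand. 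The prime-power terms ($m\geq 2$) contribute $\sum_p (\log p)^2/p^m$, which converges and hence is $o((\log t)^2)$, so they drop out.

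The main obstacle I expect is not the Gaussian combinatorics but the analytic control of the two approximations: (i) showing that replacing $S_t(f)$ by the truncated prime sum $X_t(f)$ incurs an error that is $o(1)$ in probability (equivalently in $L^2(d\omega)$), which requires bounding the tail of the explicit formula — the sum over zeros or primes beyond the cutoff — using RH for the vertical distribution of zeros and the hypothesis $\lambda(t)\ll\log t$ to keep the cutoff $e^{\lambda(t)}$ comfortably below $t$; and (ii) justifying that the ``independence'' of $\{p^{-i\omega t}\}$ is good enough uniformly over the growing family of primes, i.e. that the off-diagonal terms $\mathbf{E}[p^{-i\omega t}q^{i\omega t}]$, summed against the weights, are negligible — this is where the fact that $\omega$ ranges over an interval of positive length (rather than a point) is essential, and one needs the $\log(p/q)$ in the denominator to not conspire badly when $p,q$ are close. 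Both points are genuinely delicate; the bounded-variation and Fourier-decay side conditions on $\mathcal H^{1/2}$ are exactly what is needed to make $\hat f$ integrable against $|w|$ and to control the truncation, and the mild growth conditions on $f$ ensure the archimedean main term is precisely the counting term subtracted in \eqref{St}. Once (i) and (ii) are in place, the method of moments (the limiting Gaussian being determined by its moments) upgrades convergence of all joint moments to convergence in law of the vector, completing the proof.
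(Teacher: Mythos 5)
Note first that this paper does not prove Theorem \ref{strong} at all: it is imported from Bourgade--Kuan \cite{BK} and Rodgers \cite{Rodg}, and only the appendix reproduces the relevant machinery (the approximate explicit formula and Lemmas 4--6 of \cite{BK}) when proving the modified statement, Lemma \ref{modified}. Your plan --- an explicit-formula reduction of $S_t(f)$ to a Dirichlet polynomial over primes weighted by $\hat f\bigl(\log p/\lambda(t)\bigr)$, followed by a CLT obtained from averaging $p^{-i\omega t}$ over $\omega\in(1,2)$, with the prime number theorem producing the covariance $\Re\int |w|\,\hat f_i(w)\overline{\hat f_j(w)}\,dw$ --- is essentially the argument of \cite{BK}, i.e.\ the same approach as the source on which the paper relies; the only slips are minor normalisation issues you partly flag yourself (the prefactor in the prime sum is $1/\lambda(t)$ rather than $1/\log t$, and the natural truncation coming from Selberg's smoothed von Mangoldt function $\Lambda_{\sqrt t}(n)$ is at $n\le t$, i.e.\ $|w|\le \log t/\lambda(t)$ as in \eqref{limitvar}, rather than at $p\ll e^{\lambda(t)}$).
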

The second theorem deals with the case of diverging limiting variance.
Define
\begin{equation}\label{limitvar}
\sigma_t(f)^2 \triangleq \int\limits_{-\log t/\lambda(t)}^{\log t/\lambda(t)} |w| |\hat{f}(w)|^2\,dw,
\end{equation}
then, under the assumption that $\sigma_t(f)\rightarrow \infty$ as $t\rightarrow \infty,$ 
\begin{theorem}[Convergence in the case of diverging variance]\label{divergvar}
\begin{equation}
S_t(f)/\sigma_t(f) \overset{{\rm in \,law}}{\longrightarrow} \mathcal{N}(0, 1),
\end{equation}
where $\mathcal{N}(0, 1)$ denotes the standard gaussian random variable with the zero mean and unit variance.
\end{theorem}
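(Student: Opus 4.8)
The plan is to prove this central limit theorem by the method of moments, the idea being to express $S_t(f)$ as a linear form in the random phases $n^{-i\omega t}$, $n$ a prime power, by means of the Guinand--Weil explicit formula, and then to invoke a Wick-type pairing. First I would insert the rescaled function $x\mapsto f\bigl(\lambda(t)(x-\omega t)\bigr)$, whose Fourier transform equals $\lambda(t)^{-1}e^{-i\omega tw/\lambda(t)}\hat f\bigl(w/\lambda(t)\bigr)$, into the explicit formula. The smooth density $\tfrac{1}{2\pi}\log\tfrac{\cdot}{2\pi}$ of the zeros, which in the explicit formula arises from the pole of $\zeta$ at $s=1$ and the archimedean factor $\Gamma'/\Gamma$, is cancelled up to an error of order $1/\lambda(t)$ by the subtracted term $\tfrac{\log t}{2\pi\lambda(t)}\int f\,du$ in \eqref{St}. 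Since $\sigma_t(f)\to\infty$ by hypothesis and $\lambda(t)\to\infty$ by \eqref{lt}, this error is $o\bigl(\sigma_t(f)\bigr)$ and may be dropped after normalization, leaving $S_t(f)$ equal, up to $o(\sigma_t(f))$, to a fixed nonzero multiple of
\[
\frac{1}{\lambda(t)}\sum_{n\ge 2}\frac{\Lambda(n)}{\sqrt n}\,\Re\!\Bigl(n^{-i\omega t}\,\hat f\bigl(\tfrac{\log n}{\lambda(t)}\bigr)\Bigr),
\]
where $\Lambda$ denotes the von Mangoldt function; in this sum only the range $n\le t$, that is $\log n/\lambda(t)\le\log t/\lambda(t)$, will matter, and this is the origin of the cutoff in the definition \eqref{limitvar} of $\sigma_t(f)^2$.

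Next I would compute the moments of $S_t(f)$, the only source of randomness being $\omega$, which enters through $n^{-i\omega t}=e^{-i\omega t\log n}$. The elementary identity $\int_1^2 e^{i\omega t c}\,d\omega=O\bigl(1/(t|c|)\bigr)$, valid for $c\ne0$, shows that $\mathbf{E}\bigl[\prod_j n_j^{\epsilon_j i\omega t}\bigr]$, with $\epsilon_j\in\{\pm1\}$, equals $1$ when $\prod_{\epsilon_j=+1}n_j=\prod_{\epsilon_j=-1}n_j$ and is small otherwise; hence in the $m$-th moment of the prime sum only ``resonant'' tuples survive, and for $m=2\ell$ the leading contribution comes from the $(2\ell-1)!!$ perfect pairings, each matching a $+1$ phase with a $-1$ phase at the same prime power, while odd moments have no leading resonance and vanish. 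This gives $\mathbf{E}\bigl[S_t(f)^{2\ell}\bigr]\sim(2\ell-1)!!\,\bigl(\mathbf{E}[S_t(f)^2]\bigr)^{\ell}$, and a computation of the second moment by partial summation and the prime number theorem in the form $\sum_{p\le x}(\log^2 p)/p\sim(\log^2 x)/2$ identifies it with $\int_{-\log t/\lambda(t)}^{\log t/\lambda(t)}|w||\hat f(w)|^2\,dw=\sigma_t(f)^2$, the higher prime powers $p^k$ with $k\ge2$ contributing only lower order and the half-line $w<0$ being accounted for by $|\hat f(-w)|=|\hat f(w)|$. Therefore the $2\ell$-th moment of $S_t(f)/\sigma_t(f)$ tends to $(2\ell-1)!!=\mathbf{E}\bigl[\mathcal{N}(0,1)^{2\ell}\bigr]$ and the odd moments to $0$, and since the standard gaussian is determined by its moments, the asserted convergence in law follows. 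This regime is complementary to that of Theorem \ref{strong}, which covers the case where $\langle f,f\rangle=\int|w||\hat f(w)|^2\,dw<\infty$ and $\sigma_t(f)$ stays bounded.

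The main obstacle is the control of the non-leading contributions, and this is where the two hypotheses \eqref{lt} and $\sigma_t(f)\to\infty$ become indispensable. On the one hand one must justify discarding the prime powers $n\gg t$: their phases $n^{-i\omega t}$ have pairwise correlations $\mathbf{E}[(n/m)^{\pm i\omega t}]$ decaying only like $n/(t|n-m|)$, so they fail to decorrelate fast enough to yield a gaussian, and their total $L^2$-contribution to $S_t(f)$ must be bounded by $o(\sigma_t(f))$ by a separate estimate, the boundary range $n\asymp t$ needing particular care. On the other hand one must bound the off-diagonal resonances---tuples with $\prod_{+}n_j=\prod_{-}n_j$ that are not perfect pairings---together with the near-resonances, in which this ratio is close to but not equal to $1$; these are estimated by divisor-type sums and shown to be $o\bigl(\sigma_t(f)^{m}\bigr)$ precisely because $\sigma_t(f)\to\infty$, while $\lambda(t)\to\infty$ keeps the arithmetic weights bounded, and it is the upper bound $\lambda(t)\ll\log t$ that ultimately guarantees $t\,\bigl|\sum_j\epsilon_j\log n_j\bigr|\to\infty$ over the relevant tuples. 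An alternative to the method of moments would be a Lindeberg-type argument applied directly to the prime sum, whose summands decorrelate by the same phase estimate $\int_1^2 e^{i\omega t c}\,d\omega=O(1/(t|c|))$ and each of which has variance $O(1/\lambda(t)^2)$ against a total variance $\sim\sigma_t(f)^2\to\infty$, so that the Lindeberg condition holds; the moment method, however, organizes the combinatorics more transparently, and it is essentially the route taken in \cite{BK} and \cite{Rodg}.
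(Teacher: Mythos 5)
Your sketch is correct in outline and follows essentially the same route the paper relies on: the paper does not reprove Theorem \ref{divergvar} but cites \cite{BK} and \cite{Rodg}, whose argument — approximate explicit formula reducing $S_t(f)$ to a Dirichlet-polynomial-type sum over prime powers with random phases $n^{i\omega t}$, negligibility of higher prime powers and error terms, and a moment/pairing computation identifying the variance with \eqref{limitvar} — is exactly what you describe, and is also the template the paper's appendix follows verbatim when proving Lemma \ref{modified}. The delicate points you flag (off-diagonal and near-resonant tuples, the range $n\asymp t$) are indeed where the cited proofs do the real work, so nothing essential is missing from your plan.
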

The significance of the condition $\lambda(t)\ll \log t$ is that the number of zeroes that are visited by $f$ as $t\rightarrow \infty$ goes to infinity, \emph{i.e.} Theorems \ref{strong} and \ref{divergvar}  are mesoscopic central limit theorems. 

The intuitive meaning of the BKR theorems and the statistic $S_t(f)$ can be established from the connection of $S_t(f)$ with the error 
term $S(t)$ in the zero counting function $N(t).$ Let $N(t)$ denote
the number of Riemann zeroes having their imaginary part (``height'') between zero and $t.$ Let the function $S(t)$ be defined by
\begin{equation}
S(t) \triangleq \frac{1}{\pi}\arg \zeta(1/2+it) = \frac{1}{\pi}\Im \log\zeta(1/2+it).
\end{equation}
Then, the Riemann-von Mangoldt formula states
\begin{align}
N(t) & =  \frac{1}{\pi} \Bigl(\Im \log\Gamma(\frac{1}{4}+\frac{it}{2}) -\frac{t}{2}\log\pi\Bigr)+1 + S(t),\\
& =  \frac{t}{2\pi} \log \frac{t}{2\pi} -\frac{t}{2\pi} + S(t) + \frac{7}{8} + O(1/t) \label{RMasym}
\end{align}
in the limit $t\rightarrow \infty.$ Let $u$ be fixed and $\lambda(t)$ be as in \eqref{lt}, then the asymptotic in equation \eqref{RMasym} implies that the number of zeroes 
in the random interval $[\omega t, \omega t+u/\lambda(t)]$ satisfies in the same limit
\begin{equation}
N\Bigl(\omega t+\frac{u}{\lambda(t)}\Bigr)-N(\omega t) = u\, \frac{\log t}{2\pi\lambda(t)} + S\Bigl(\omega t+\frac{u}{\lambda(t)}\Bigr)-S(\omega t) +O\bigl(1/\lambda(t)\bigr).\label{numinterval}
\end{equation}
It follows that the expected number of zeroes in $[\omega t, \omega t+u/\lambda(t)]$ is given by the leading asymptotic $u\log t/2\pi\lambda(t) ,$
whereas $S\bigl(\omega t+u/\lambda(t)\bigr)-S(\omega t)$ gives ``the error'', \emph{i.e.} the fluctuation of the number of zeroes in this interval from its leading
asymptotic. Note that the condition in \eqref{lt} means that the length of the interval
goes to zero, whereas the expected number of zeroes goes to infinity in the limit $t\rightarrow \infty,$ \emph{i.e.} the interval is mesoscopic.
Let $\chi_u(x)$ denote the indicator function of the interval $[0, \,u],$ then the corresponding BKR statistic satisfies by \eqref{St} and
\eqref{numinterval}
\begin{align}
S_t(\chi_u) & = N\Bigl(\omega t+\frac{u}{\lambda(t)}\Bigr)-N(\omega t) - u \, \frac{\log t}{2\pi\lambda(t)}, \\
& = S\Bigl(\omega t+\frac{u}{\lambda(t)}\Bigr)-S(\omega t) + O\bigl(1/\lambda(t)\bigr).\label{StS}
\end{align}
Hence the statistics measures the fluctuation of the error term over the random interval $[\omega t, \omega t+u/\lambda(t)].$
It is easy to see from \eqref{limitvar} that the corresponding variance has the leading asymptotic $\sigma_t^2(\chi_u) \approx \log \bigl(\log t/\lambda(t)\bigr)/\pi^2$
so that Theorem \ref{divergvar} gives us
\begin{equation}\label{FujiiCLT}
\frac{S\bigl(\omega t+u/\lambda(t)\bigr)-S(\omega t)}{\sqrt{\log \bigl(\log t/\lambda(t)\bigr)}/\pi} \overset{{\rm in \,law}}{\longrightarrow}  \mathcal{N}(0, 1).
\end{equation}
This special case of Theorem \ref{divergvar} is known as Fujii's central limit theorem, cf. \cite{Fujii}, \cite{Fujii2}. It turns out that the interpretation of the BKR
statistic as a measure of fluctuation of the error term remains true in general. We have the following identity for compactly supported test functions\footnote{
This equation does not appear in \cite{BK} but follows easily from intermediate steps in their derivation of Proposition 3. 
In all of our applications, cf. \eqref{Su} and \eqref{Sut} below, the constant in $O\bigl(1/\lambda(t)\bigr)$ is of the order $O\bigl(||f||_1\bigr)$ so
that the $O\bigl(1/\lambda(t)\bigr)$ term is negligible for our purposes due to $||f||_1/\lambda(t)\ll 1.$ A proof of \eqref{resultS} is given in the appendix.
}
\begin{equation}\label{resultS}
S_t(f) = - \int f'(x) \,S\Bigl(\omega t + \frac{x}{\lambda(t)}\Bigr)\,dx + O\bigl(1/\lambda(t)\bigr).
\end{equation}
Clearly, this equation recovers \eqref{StS} in the case of $f=\chi_u$ since $f'$ is simply the difference of the delta functions at the endpoints.

We now proceed to state our results. Let $0<u<1$ and $\chi^{(i)}_u(x),$ $i=1,2,$ denote the indicator functions of the intervals $[0, \,u]$
and $[-1/\varepsilon, \,u]$ for some fixed $\varepsilon>0,$ respectively. Let $\phi(x)$ be a smooth bump function supported on $(-1/2, \,1/2),$ 
and denote
\begin{equation}\label{kappa}
\kappa\triangleq -\int
\phi(x)\phi(y)\log|x-y|\,dxdy. 
\end{equation}
Define the $\varepsilon$-rescaled bump function by $\phi_\varepsilon(x)\triangleq 1/\varepsilon\phi(x/\varepsilon),$
and let $f^{(i)}_{\varepsilon, u}(x)$ be the smoothed indicator functions of 
the intervals $[0, \,u]$ and $[-1/\varepsilon, \,u]$ given by 
the convolution of $\chi^{(i)}_u(x)$ with $\phi_\varepsilon(x)$
\begin{equation}\label{fu}
f^{(i)}_{\varepsilon, u}(x) \triangleq (\chi^{(i)}_u\star\phi_\varepsilon)(x) = \frac{1}{\varepsilon}\int \chi^{(i)}_u(x-y)\phi(y/\varepsilon)\,dy.
\end{equation}
Clearly, $f^{(1)}_{\varepsilon, u}(x)=1$ for $x\in[\varepsilon/2,\,u-\varepsilon/2],$
$f^{(2)}_{\varepsilon, u}(x)=1$ for $x\in[-1/\varepsilon+\varepsilon/2,\,u-\varepsilon/2],$
$f^{(1)}_{\varepsilon, u}(x)=0$ for $x\geq u+\varepsilon/2$ and $x\leq -\varepsilon/2,$ 
and $f^{(2)}_{\varepsilon, u}(x)=0$ for $x\geq u+\varepsilon/2$ and $x\leq -1/\varepsilon-\varepsilon/2.$ 
Moreover, 
\begin{align}
f'^{(1)}_{\varepsilon, u}(x) = & \phi_\varepsilon(x) - \phi_\varepsilon(x-u), \label{fuprime1} \\
f'^{(2)}_{\varepsilon, u}(x) = & \phi_\varepsilon(x+1/\varepsilon) - \phi_\varepsilon(x-u). \label{fuprime2} 
\end{align}
Theorem \ref{strong} applies to $f^{(i)}_{\varepsilon, u}(x)$ for all $u>\varepsilon>0.$ Fix $\varepsilon>0$ and denote the BKR statistic based on some constant\footnote{The choice of the constant $\pi\sqrt{2\mu},$ $0<\mu<2$ will be explained in Section 4, cf. Corollary \ref{covstructureS}.} times $f^{(i)}_{\varepsilon, u}(x)$ by $S^{(i)}_t(\mu,u,\varepsilon)$
\begin{equation}\label{Su}
S^{(i)}_t(\mu, u, \varepsilon)\triangleq \pi\sqrt{2\mu}\Bigl[\sum\limits_{\gamma} (\chi^{(i)}_u\star\phi_{\varepsilon})\bigl(\gamma(t)\bigr) - \frac{\log t}{2\pi \lambda(t)} \int (\chi^{(i)}_u\star\phi_{\varepsilon})(x) dx\Bigr].
\end{equation}
The meaning of the first statistic $(i=1)$ is that it counts zeroes in the interval
$[t, 2t]$ over three asymptotic scales. Specifically,
over the interval of length $u/\lambda(t)-\varepsilon/\lambda(t),$
\begin{equation}\label{middle1}
\gamma-\frac{u}{\lambda(t)}+\frac{\varepsilon}{2\lambda(t)} < \omega t <
\gamma - \frac{\varepsilon}{2\lambda(t)},
\end{equation}
the zero is counted with the weight 1, whereas it is counted with a diminishing
weight that is determined by $\varepsilon$ and the bump function over the boundary intervals of length $\varepsilon/\lambda(t),$
\begin{align}
\gamma-\frac{\varepsilon}{2\lambda(t)} <& \omega t <
\gamma + \frac{\varepsilon}{2\lambda(t)}, \label{rboundary1}\\
\gamma-\frac{u}{\lambda(t)}-\frac{\varepsilon}{2\lambda(t)} <& \omega t <
\gamma - \frac{u}{\lambda(t)}+ \frac{\varepsilon}{2\lambda(t)}.
\end{align}
The second statistic $(i=2)$ has the same interpretation except that instead of \eqref{middle1} and \eqref{rboundary1} we have
\begin{align}
\gamma-\frac{u}{\lambda(t)}+\frac{\varepsilon}{2\lambda(t)} <& \omega t <
\gamma - \frac{\varepsilon}{2\lambda(t)} + \frac{1}{\varepsilon\lambda(t)}, \\
\gamma-\frac{\varepsilon}{2\lambda(t)} +\frac{1}{\varepsilon\lambda(t)} <& \omega t <
\gamma + \frac{\varepsilon}{2\lambda(t)} +\frac{1}{\varepsilon\lambda(t)},
\end{align}
respectively, so that  the middle interval 
has length $u/\lambda(t)-\varepsilon/\lambda(t)+1/\varepsilon\lambda(t) ,$ whereas the boundary intervals have length $\varepsilon/\lambda(t)$ as before.
Hence, the three scales are $\varepsilon/\lambda(t),$ $u/\lambda(t),$ $t,$
and they satisfy the asymptotic condition
\begin{equation}
\frac{1}{\log t}\ll \frac{\varepsilon}{\lambda(t)} < \frac{u}{\lambda(t)}\ll 1\ll t.
\end{equation}
This means that $t$ defines the global scale and $u/\lambda(t)\gg \text{average spacing}$ so that $u/\lambda(t)$ 
and $\varepsilon/\lambda(t)$ are on the mesoscopic scale.

The meaning of the statistics $S^{(i)}_t(\mu, u, \varepsilon)$ can be elucidated further by means of Theorem \ref{strong} and \eqref{resultS}.
Given the expressions for the derivatives in \eqref{fuprime1} and \eqref{fuprime2}, which show that the derivatives are smooth approximations 
of the difference of the delta functions at the endpoints, we can write
\begin{align}
S^{(1)}_t(\mu, u, \varepsilon) & = \pi\sqrt{2\mu} \int \bigl(\phi_\varepsilon(x-u) - \phi_\varepsilon(x) \bigr)\,
S\Bigl(\omega t + \frac{x}{\lambda(t)}\Bigr)\,dx + O\bigl(1/\lambda(t)\bigr), \label{ourstats1}\\
S^{(2)}_t(\mu, u, \varepsilon) & = \pi\sqrt{2\mu} \int \bigl(\phi_\varepsilon(x-u)-\phi_\varepsilon(x+1/\varepsilon)\bigr) \,
S\Bigl(\omega t + \frac{x}{\lambda(t)}\Bigr)\,dx +O\bigl(1/\lambda(t)\bigr),\label{ourstats2}
\end{align}
so that the statistics are smooth approximations of 
\begin{align}
 & \pi\sqrt{2\mu} \Bigl[S\Bigl(\omega t + \frac{u}{\lambda(t)}\Bigr) - S(\omega t)\Bigr], \\
 & \pi\sqrt{2\mu} \Bigl[S\Bigl(\omega t + \frac{u}{\lambda(t)}\Bigr) - S\Bigl(\omega t-\frac{1}{\varepsilon \lambda(t)} \Bigr)\Bigr],
\end{align}
and can be interpreted as fluctuations of the smoothed error term over $[\omega t,\,\omega t +u/\lambda(t)]$ and $[\omega t-1/\varepsilon\lambda(t),\,\omega t+u/\lambda(t)],$ respectively. 
Moreover, by Theorem \ref{strong} and calculations in Corollaries \ref{covstructureS} and \ref{covstructureS2},
the processes $u\rightarrow S^{(i)}_t(\mu, u, \varepsilon),$ $u\in (0, 1),$
converge in law in the limit $t\rightarrow\infty$  to centered gaussian fields having the asymptotic covariances,
for $i=1,$
\begin{align}\label{limcov1}
\begin{cases}
& -\mu 
\bigl(\log\varepsilon - \kappa + \log|u-v| - \log|u| - \log|v| \bigr) + O(\varepsilon), \; \text{if $|u-v|\gg \varepsilon$}, 
 \\
 &  -2\mu 
\bigl(\log\varepsilon - \kappa - \log|u| \bigr) + O(\varepsilon),
 \; \text{if $u=v,$}
\end{cases}
\end{align}
and, for $i=2,$
\begin{align}\label{limcov2}
\begin{cases}
 -\mu 
\bigl(3\log\varepsilon - \kappa + \log|u-v| \bigr) + O(\varepsilon), \; \text{if $|u-v|\gg \varepsilon$}, 
 \\
-\mu \bigl(4\log\varepsilon - 2\kappa  \bigr) + O(\varepsilon),
 \; \text{if $u=v,$}
\end{cases}
\end{align}
thereby exhibiting logarithmic correlations. These covariances
will be identified in Section 4 as corresponding to the centered gaussian free field and the gaussian free field plus an independent gaussian random variable,
respectively. We see from \eqref{limcov1} and \eqref{limcov2} that the asymptotic covariances
of both statistics diverge as $\log\varepsilon$ in the limit $\varepsilon\rightarrow 0.$ We are primarily interested in the statistical structure
of the $\log|u-v|$ terms, which is hiding behind these divergences. We will remove them so as to reveal the underlying structure by
introducing rescaling factors into the Mellin transforms as shown below. 

It should also be noted that the need for smoothing is necessitated by the singularity of our statistics in the limit
$\varepsilon\rightarrow 0.$ Indeed, $S^{(2)}_t(\mu, u, \varepsilon)$ is not even defined in this limit, and 
${\bf Var}[  S^{(1)}_t(\mu, u, 0)] \propto \log(\log t/\lambda(t))$
in Fujii's theorem, cf. \eqref{FujiiCLT}, is
asymptotically different from ${\bf Var}[\lim\limits_{t\rightarrow \infty}  S^{(1)}_t(\mu, u, \varepsilon)] \propto -\log(\varepsilon)$ in
\eqref{limcov1}. 
This is explained by the difference between the formulas for the variance in 
\eqref{scalarf} and \eqref{limitvar}, in fact, as shown in Lemma \ref{limit}, the difference between the two for 
smoothed indicator functions is of the order $O(\lambda(t)/\varepsilon \log t)$ and so becomes significant if 
one takes the $\varepsilon\rightarrow 0$ limit first. To bypass this singularity, in what follows we will always take
the $t$ limit first and, when the two limits are taken simultaneously, $\varepsilon(t)$ will vary ``slowly''
enough to achieve he same end.

Motivated by the proof of Theorem \ref{divergvar} in \cite{BK}, we are also interested in the same type of statistics that 
are based on $t$-dependent $\varepsilon(t).$ 
Let
\begin{gather}
\frac{\lambda(t)}{\log t}\ll\varepsilon(t) \ll 1, \; \text{for}\;i=1,\label{vart}\\
\frac{\lambda(t)}{\log t}\ll\varepsilon(t) \ll 1 \; \text{and} \;\frac{1}{\lambda(t)}\ll\varepsilon(t),\; \text{for}\; i=2.\label{vart2}
\end{gather}
In other words, we replace 
$\varepsilon$ with $\varepsilon(t)$
in \eqref{Su}, resulting in the statistics\footnote{We note that $\chi^{(2)}_u(x) =\chi_{[-1/\varepsilon(t),\,u]}(x)$
becomes $t$-dependent so that $||\chi^{(2)}_u||_1=O\bigl(1/\varepsilon(t)\bigr)$ and the $O\bigl(1/\lambda(t)\bigr)$ term in \eqref{resultS} is negligible due to $\varepsilon(t)\lambda(t)\gg1$ in \eqref{vart2}.}
$S^{(i)}_t(\mu, u) \triangleq S^{(i)}_t(\mu, u, \varepsilon(t)),$
\begin{equation}\label{Sut}
S^{(i)}_t(\mu, u)\triangleq \pi\sqrt{2\mu}\Bigl[\sum\limits_{\gamma} (\chi^{(i)}_u\star\phi_{\varepsilon(t)})\bigl(\gamma(t)\bigr) - \frac{\log t}{2\pi \lambda(t)} \int (\chi^{(i)}_u\star\phi_{\varepsilon(t)})(x) dx\Bigr].
\end{equation}
These statistics also count zeroes in the interval
$[t, 2t]$ over the three scales $\varepsilon(t)/\lambda(t),$ $u/\lambda(t),$ $t,$ as before, except that $u/\lambda(t)\gg\varepsilon(t)/\lambda(t) \gg \text{average spacing},$
\begin{equation}
\frac{1}{\log t} \ll \frac{\varepsilon(t)}{\lambda(t)} \ll\frac{u}{\lambda(t)}\ll 1\ll t.
\end{equation}
\begin{remark}
We note that the asymptotic conditions in \eqref{vart} and \eqref{vart2} have a simple interpretation of being ``slow'' decay conditions
that require that the lengths of intervals, over which the zeroes are counted, go to zero, whereas the expected numbers of the zeroes counted go to infinity.
The lengths of the middle and boundary intervals of the first statistic satisfy $O\bigl(1/\lambda(t)\bigr)$ 
and $O\bigl(\varepsilon(t)/\lambda(t)\bigr),$ so that the corresponding expected numbers of zeroes are 
$O\bigl(\log t/\lambda(t)\bigr)$ and $O\bigl(\varepsilon(t)\,\log t/\lambda(t)\bigr)$ by \eqref{numinterval}, respectively. Thus, the lengths go to zero by \eqref{lt} and 
the expected numbers go to infinity by \eqref{lt} and \eqref{vart}. Similarly, the lengths of the middle and boundary intervals of the second statistic are
$O\bigl(1/\lambda(t)+1/\varepsilon(t)\,\lambda(t)\bigr)$ and $O\bigl(\varepsilon(t)/\lambda(t)\bigr)$ so that the above argument remains valid
provided $\varepsilon(t)\,\lambda(t)\gg 1,$ hence \eqref{vart2}. We finally note that the interpretation of $S^{(i)}_t(\mu, u)$ as fluctuations
of smoothed error terms and calculations of limiting covariances in \eqref{limcov1} and \eqref{limcov2} remain valid provided \eqref{vart} and \eqref{vart2}
are satisfied as shown in Lemmas \ref{modified} and \ref{limit}, \emph{i.e.} the limiting fields are gaussian and have $\varepsilon(t)$-dependent asymptotic covariance given in \eqref{limcov1} and \eqref{limcov2} with $\varepsilon=\varepsilon(t).$
\end{remark}

The statements of our results below require some familiarity with the double gamma function of Barnes (or the Alexeiewsky-Barnes $G-$function). We will give a brief summary of its definition and properties here and refer the reader to \cite{Genesis}, \cite{mBarnes}  for the original construction, to \cite{Ruij} for a modern treatment, and to \cite{MeIMRN} and \cite{Me14} for detailed reviews. One starts with the double zeta function
\begin{equation}
\zeta_2(s, w\,|\,\tau) \triangleq \sum\limits_{k_1, k_2=0}^\infty (w+k_1+k_2\tau)^{-s}
\end{equation}
that is defined for $\Re(s)>2,$ $\Re(w)>0,$ and $\tau>0.$ It can be analytically extended to $s\in\mathbb{C}$ with simple poles at $s=1$ and $s=2.$
The double gamma function is then defined as the exponential of the $s$-derivative of $\zeta_2(s, w|\tau)$ at $s=0,$
\begin{equation}
\Gamma_2(w\,|\,\tau) \triangleq \exp\Bigl(\partial_s\big\vert_{s=0} \zeta_2(s, w\,|\,\tau)\Bigr).
\end{equation}
The resulting function can be analytically extended to $w\in\mathbb{C}$ having no zeroes and poles at 
\begin{equation}
w=-(k_1+k_2\tau), \, k_1, k_2\in\mathbb{N}.
\end{equation}
Barnes gave an infinite product formula for $\Gamma_2(w\,|\,\tau),$
which in our normalization takes on the form
\begin{equation}
\Gamma^{-1}_2(w\,|\,\tau) = e^{P(w\,|\,\tau)}\,w\prod\limits_{k_1, k_2=0}^\infty
{}' \Bigl(1+\frac{w}{k_1+k_2\tau}\Bigr)\exp\left(-\frac{w}{k_1+k_2\tau}+
\frac{w^2}{2(k_1+k_2\tau)^2}
\right), 
\end{equation}
where $P(w\,|\,\tau)$ is a polynomial in $w$ of degree $2,$ and the prime indicates that the product is over all indices except $k_1=k_2=0.$
The double gamma function satisfies the functional equations
\begin{align}
\frac{\Gamma_2(z\,|\,\tau)}{\Gamma_2(z+1\,|\,\tau)}
& = \frac{\tau^{z/\tau-1/2}}
{\sqrt{2\pi}}\Gamma\Bigl(\frac{z}{\tau}\Bigr), \\
\frac{\Gamma_2(z\,|\,\tau)}{\Gamma_2(z+\tau\,|\,\tau)}
& = \frac{1}
{\sqrt{2\pi}}\Gamma\bigl(z\bigr),
\end{align}
where $\Gamma(z)$ denotes Euler's gamma function.
An explicit integral representation of $\log\Gamma_2(w\,|\,\tau)$ and additional
infinite product representations of $\Gamma_2(w\,|\,\tau)$ 
can be found in \cite{MeIMRN}.

From now on, it is always assumed that $\lambda(t)$ and $\varepsilon(t)$ satisfy \eqref{lt} and \eqref{vart} or \eqref{vart2}, respectively, and $\kappa$ is 
as in \eqref{kappa}. Given $0<\mu<2,$ henceforth let 
\begin{equation}
\tau\triangleq \frac{2}{\mu}>1.
\end{equation}
The results given below are stated for the $S^{(1)}_t(\mu,u,\varepsilon)$ and $S^{(1)}_t(\mu,u)$ statistics to avoid
redundancy, for the same formulas apply to $S^{(2)}_t(\mu,u,\varepsilon)$ provided one simultaneously replaces
\begin{align}
e^{S^{(1)}_t(\mu,u,\varepsilon)} &\longleftrightarrow u^{\mu q} e^{S^{(2)}_t(\mu,u,\varepsilon)}, \label{transl1}\\
e^{\mu(\log \varepsilon-\kappa)\frac{q(q+1)}{2}} &\longleftrightarrow e^{\mu\log\varepsilon q^2} e^{\mu(\log \varepsilon-\kappa)\frac{q(q+1)}{2}},\label{transl2}
\end{align}
and, similarly to $S^{(2)}_t(\mu,u),$ provided one replaces $e^{S^{(1)}_t(\mu,u)} \longleftrightarrow u^{\mu q} e^{S^{(2)}_t(\mu,u)}$ and uses \eqref{vart2} instead of \eqref{vart}.
For clarity, this translation is shown explicitly in Conjectures \ref{Mellinvar} and \ref{Mellin}. Given these preliminaries, 
our results are as follows. 

\begin{conjecture}[Mellin-type transforms: weak version]\label{Mellinvar} 
Let $\Re(q)<\tau,$ then
\begin{align}
&\lim\limits_{\varepsilon\rightarrow 0} e^{\mu(\log \varepsilon-\kappa)\frac{q(q+1)}{2}} \Bigl[\lim\limits_{t\rightarrow\infty} 
{\bf E} \Bigl[\Bigl(\int_0^1 u^{-\mu q} e^{S^{(1)}_t(\mu,u,\varepsilon)} du\Bigr)^q\Bigr]\Bigr], \label{M2transf}\\ 
&= \lim\limits_{\varepsilon\rightarrow 0}  e^{\mu\log\varepsilon q^2} e^{\mu(\log \varepsilon-\kappa)\frac{q(q+1)}{2}} \Bigl[\lim\limits_{t\rightarrow\infty} 
{\bf E} \Bigl[\Bigl(\int_0^1 e^{S^{(2)}_t(\mu,u,\varepsilon)} du\Bigr)^q\Bigr]\Bigr], \label{M2transfstrong}\\
& =
\tau^{\frac{q}{\tau}} (2\pi)^{q}\,\Gamma^{-q}\bigl(1-1/\tau\bigr)
\frac{\Gamma^2_2(1-q+\tau\,|\,\tau)}{\Gamma^2_2(1+\tau\,|\,\tau)}
\frac{\Gamma_2(-q+\tau\,|\,\tau)}{\Gamma_2(\tau\,|\,\tau)}
\frac{\Gamma_2(2-q+2\tau\,|\,\tau)}{\Gamma_2(2-2q+2\tau\,|\,\tau)}
\label{M2}
.
\end{align}
Let $-(\tau+1)/2<\Re(q)<\tau,$ then
\begin{align}
& \lim\limits_{\varepsilon\rightarrow 0} e^{\mu(\log \varepsilon-\kappa)\frac{q(q+1)}{2}} \Big[\lim\limits_{t\rightarrow\infty}  
{\bf E} \Bigl[\Bigl(\int_0^1 e^{S^{(1)}_t(\mu,u,\varepsilon)} du\Bigr)^q\Bigr]\Bigr], \label{M1transf}\\
& = \lim\limits_{\varepsilon\rightarrow 0}  e^{\mu\log\varepsilon q^2} e^{\mu(\log \varepsilon-\kappa)\frac{q(q+1)}{2}} \Bigl[\lim\limits_{t\rightarrow\infty} 
{\bf E} \Bigl[\Bigl(\int_0^1 u^{\mu q}e^{S^{(2)}_t(\mu,u,\varepsilon)} du\Bigr)^q\Bigr]\Bigr], \label{M1transfstrong}\\
& =  \tau^{\frac{q}{\tau}} (2\pi)^{q}\,\Gamma^{-q}\bigl(1-1/\tau\bigr)
\frac{\Gamma_2(1+q+\tau\,|\,\tau)}{\Gamma_2(1+2q+\tau\,|\,\tau)}
\frac{\Gamma_2(1-q+\tau\,|\,\tau)}{\Gamma_2(1+\tau\,|\,\tau)}
\frac{\Gamma_2(-q+\tau\,|\,\tau)}{\Gamma_2(\tau\,|\,\tau)}
\frac{\Gamma_2(2+q+2\tau\,|\,\tau)}{\Gamma_2(2+2\tau\,|\,\tau)}.
\label{M1}
\end{align}
\end{conjecture}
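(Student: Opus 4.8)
Since the statement is a conjecture, the argument I would present is the heuristic derivation promised for Section 4. The plan has three movements. First, for fixed $\varepsilon>0$, take the limit $t\to\infty$: by Theorem \ref{strong} the process $u\mapsto S^{(1)}_t(\mu,u,\varepsilon)$ converges in law to the centered Gaussian field $u\mapsto S^{(1)}(\mu,u,\varepsilon)$ with covariance \eqref{limcov1} (up to an $O(\varepsilon)$ term, harmless in the later $\varepsilon\to 0$ limit); granting convergence of the pertinent $q$-th moments, the inner bracket of \eqref{M2transf} becomes $\mathbf{E}\bigl[\bigl(\int_0^1 u^{-\mu q}e^{S^{(1)}(\mu,u,\varepsilon)}\,du\bigr)^q\bigr]$. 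Second, identify this Gaussian field with an $\varepsilon$-regularization of the gaussian free field, so that the rescaled functional is an approximation to the total mass of the limit lognormal measure of Bacry--Muzy with intermittency $\mu$. Third, evaluate the whole object at positive integers $q=n$, where the Selberg integral appears explicitly, and pass to complex $q$ using the known closed form of the Mellin transform of that total mass, equivalently of the Selberg integral distribution of \cite{Me4}, as established in \cite{MeIMRN}, \cite{Me14}.

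The Gaussian step is the concrete ``exact calculation''. For $q=n\in\mathbb{N}$, expanding the outer power and using $\mathbf{E}\bigl[\prod_i e^{X_{u_i}}\bigr]=\exp\bigl(\tfrac12\sum_{i,j}C(u_i,u_j)\bigr)$,
\[
\mathbf{E}\Bigl[\Bigl(\int_0^1 u^{-\mu n}e^{S^{(1)}(\mu,u,\varepsilon)}\,du\Bigr)^{n}\Bigr]
=\int_{[0,1]^n}\prod_{i=1}^n u_i^{-\mu n}\,\exp\Bigl(\tfrac12\sum_{i,j=1}^n C(u_i,u_j)\Bigr)\prod_{i=1}^n du_i ,
\]
with $C$ the covariance from \eqref{limcov1}. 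Substituting and collecting terms: the diagonal and off-diagonal $(\log\varepsilon-\kappa)$ contributions assemble into $e^{-\mu(\log\varepsilon-\kappa)n(n+1)/2}$, cancelled exactly by the rescaling prefactor of \eqref{M2transf} at $q=n$; the $-\log|u_i|-\log|u_j|$ boundary terms assemble into $\prod_i u_i^{\mu n}$, cancelling the weight $\prod_i u_i^{-\mu n}$; and the $-\log|u_i-u_j|$ terms give $\prod_{i<j}|u_i-u_j|^{-\mu}$ off the diagonals, the smoothing regularizing the singularities on scale $O(\varepsilon)$. Passing to $\varepsilon\to 0$ (legitimate because for a positive integer $q=n$ in the strip $\Re(q)<\tau$ one has $\mu<2/n$, so the singular integral converges) leaves $\int_{[0,1]^n}\prod_{i<j}|u_i-u_j|^{-\mu}\prod_i du_i$, the $n$-th moment of the Selberg integral distribution and the value of \eqref{M2} at $q=n$. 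Running the computation without the weight $u^{-\mu q}$ (the case of \eqref{M1transf}) leaves $\prod_i u_i^{\mu n}$ uncancelled and yields instead the deformed Selberg integral $\int_{[0,1]^n}\prod_i u_i^{\mu n}\prod_{i<j}|u_i-u_j|^{-\mu}\prod_i du_i$. The equivalence with the $i=2$ statements is the translation \eqref{transl1}--\eqref{transl2}: it encodes that $S^{(2)}$ is distributed as the field underlying $S^{(1)}$ plus an independent Gaussian, so that $u^{\mu q}e^{S^{(2)}}$ reproduces the same $n$-fold integrand; this is where \eqref{limcov1}--\eqref{limcov2} are identified with the gaussian free field and the gaussian free field plus an independent gaussian, respectively.

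It remains to pass from the integer moments to \eqref{M2}, \eqref{M1} in the full strips. The Selberg integral distribution is rigorously constructed in \cite{Me4}, and in \cite{MeIMRN}, \cite{Me14} the meromorphic continuations in $n$ of the two Selberg integrals above — the Mellin transforms of that distribution and of a one-parameter deformation of it — are shown to be exactly the ratios of double gamma functions on the right of \eqref{M2} (analytic and pole-free for $\Re(q)<\tau$) and \eqref{M1} ($-(\tau+1)/2<\Re(q)<\tau$); at the level of bookkeeping this is the standard rewriting of a product of Euler gamma values over an arithmetic progression of common difference $-1/\tau$ in terms of $\Gamma_2(\cdot\,|\,\tau)$. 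Since the rescaled functional inherits from the cutoff gaussian free field the stochastic self-similarity (log-infinite divisibility) of the limit lognormal measure, its Mellin transform obeys the functional equation that characterizes that of the Selberg integral distribution, whose solution in the strip is \eqref{M2}; with the integer-moment match this yields the asserted identities. (Equivalently, one may simply invoke the conjectural identification, from \cite{Me4}, of the total mass of the limit lognormal measure with the Selberg integral distribution.)

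The genuinely non-rigorous points — and the reason this remains a conjecture — are the exchange of $\lim_{t\to\infty}$ with the expectation and with the $u$-integration, the strength of the identification of the limiting field with a gaussian free field, and the still-open identification of the chaos total mass with the Selberg integral distribution. Theorem \ref{strong} provides joint convergence in law of $(S_t(f_1),\dots,S_t(f_k))$ for \emph{finitely many} test functions, not convergence of the process $u\mapsto S^{(1)}_t(\mu,u,\varepsilon)$ in a function space, and certainly not convergence of the nonlinear functional $\bigl(\int_0^1 u^{-\mu q}e^{S^{(1)}_t(\mu,u,\varepsilon)}\,du\bigr)^q$ or of its expectation; promoting the finite-dimensional statement to the functional one requires tightness of the process together with moment bounds uniform in $t$ (and, for the $t$-dependent $\varepsilon(t)$ statistics of \eqref{Sut}, along a suitable $\varepsilon(t)\to 0$ obeying \eqref{vart}), which are not available. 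For non-integer $q$ the $q$-th moment is not a finite-dimensional integral at all, so the equality with \eqref{M2} leans entirely on the Mellin-transform theory of \cite{MeIMRN} and on the order in which the $\varepsilon\to 0$ and $t\to\infty$ limits are taken, which is delicate there as the paper notes. I expect the tightness and uniform-integrability step to be the principal obstacle to turning this derivation into a proof.
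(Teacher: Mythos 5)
Your overall scaffolding matches the paper's heuristic: for fixed $\varepsilon$ take $t\to\infty$ via Theorem \ref{strong} (the paper's Corollary \ref{covstructureS}), identify the limit field with a cutoff (centered) gaussian free field, compute the rescaled integer moments — your Gaussian calculation at $q=n$ is exactly the paper's Lemma \ref{single} — and treat the $i=2$ case by adding an independent gaussian, with Conjecture \ref{myresult} supplying the Selberg integral distribution at the end. The gap is in how you reach complex $q$. The paper does not continue from integer moments at all: its key device is a Girsanov-type change of measure (Lemma \ref{Girsanov}, packaged as Theorem \ref{general}), which shows directly, for complex $q$ in the stated strips, that $e^{\mu(\log\varepsilon-\kappa)q(q+1)/2}\,{\bf E}\bigl[\bigl(\int_I u^{-\mu q}\varphi(u)e^{\bar\omega_{\mu,\varepsilon}(u)}du\bigr)^q\bigr]$ converges to ${\bf E}\bigl[\bigl(\int_I\varphi(u)M_\mu(du)\bigr)^q\bigr]$, and that dropping the weight produces ${\bf E}\bigl[\bigl(\int_I u^{\mu q}\varphi(u)M_\mu(du)\bigr)^q\bigr]$; this identity is what makes the $q$-dependent weight, the rescaling factor, and in particular the two admissibility strips $\Re(q)<\tau$ and $-(\tau+1)/2<\Re(q)<\tau$ come out, the latter from the existence condition $\Re(q)>-(1+\lambda_1)2/\mu-1$ with $\lambda_1=\mu q.$

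Your substitute — match integer moments and then pass to complex $q$ by meromorphic continuation plus a functional-equation/self-similarity argument — would not close this step. First, the Stieltjes moment problems for the Selberg integral distribution are indeterminate (this is stated in Theorem \ref{B2}), so agreement of the moments \eqref{posmomentformula} does not pin down the Mellin transform; second, functional equations of the type \eqref{funceqM1}--\eqref{funceqMtau} do not have unique solutions without additional analyticity and growth input (this is precisely the weakness of the Fyodorov \emph{et. al.} route discussed in the paper's footnote to \eqref{Mfunc}); third, for \eqref{M1} the would-be deformation parameter is $\lambda_1=\mu q,$ which depends on $q$ itself, so there is no fixed distribution whose integer moments you are continuing — at $q=n$ you see the weight $u^{\mu n}$, and only the Girsanov identity with $p=q$ explains why the complex-$q$ object is the Mellin transform of $\int_0^1 u^{\mu q}M_\mu(du),$ i.e. \eqref{Mfunc} with $\lambda_1=\mu q,$ $\lambda_2=0.$ Your closing list of non-rigorous points (interchange of limits, finite-dimensional versus functional convergence, Conjecture \ref{myresult}) is accurate and agrees with the paper's own caveats, but the missing Girsanov-type lemma is not one of the acknowledged heuristic gaps — it is a rigorous ingredient of the paper's derivation that your argument needs and does not supply.
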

\begin{conjecture}[Mellin-type transforms: strong version]\label{Mellin} 
Let $\Re(q)<\tau,$ then
\begin{align}
\lim\limits_{t\rightarrow\infty} e^{\mu(\log \varepsilon(t)-\kappa)\frac{q(q+1)}{2}} 
{\bf E} \Bigl[\Bigl(\int_0^1 u^{-\mu q} e^{S^{(1)}_t(\mu,u)} du\Bigr)^q\Bigr] = &
\lim\limits_{\varepsilon\rightarrow 0} e^{\mu(\log \varepsilon-\kappa)\frac{q(q+1)}{2}} \star \nonumber \\ &\star\Bigl[\lim\limits_{t\rightarrow\infty} 
{\bf E} \Bigl[\Bigl(\int_0^1 u^{-\mu q} e^{S^{(1)}_t(\mu,u,\varepsilon)} du\Bigr)^q\Bigr]\Bigr], \label{M2v} \\
\lim\limits_{t\rightarrow\infty}  e^{\mu\log\varepsilon(t) q^2} e^{\mu(\log \varepsilon(t)-\kappa)\frac{q(q+1)}{2}} 
{\bf E} \Bigl[\Bigl(\int_0^1 e^{S^{(2)}_t(\mu,u)} du\Bigr)^q\Bigr] = &
\lim\limits_{\varepsilon\rightarrow 0}  e^{\mu\log\varepsilon q^2}e^{\mu(\log \varepsilon-\kappa)\frac{q(q+1)}{2}}\star\nonumber \\ &\star\Bigl[\lim\limits_{t\rightarrow\infty} 
{\bf E} \Bigl[\Bigl(\int_0^1 e^{S^{(2)}_t(\mu,u,\varepsilon)} du\Bigr)^q\Bigr]\Bigr]. \label{M2vstrong}
\end{align}
Let $-(\tau+1)/2<\Re(q)<\tau,$ then
\begin{equation}
\lim\limits_{t\rightarrow\infty} e^{\mu(\log \varepsilon(t)-\kappa)\frac{q(q+1)}{2}} 
{\bf E} \Bigl[\Bigl(\int_0^1 e^{S^{(1)}_t(\mu,u)} du\Bigr)^q\Bigr] = 
\lim\limits_{\varepsilon\rightarrow 0} e^{\mu(\log \varepsilon-\kappa)\frac{q(q+1)}{2}} \Bigl[\lim\limits_{t\rightarrow\infty} 
{\bf E} \Bigl[\Bigl(\int_0^1 e^{S^{(1)}_t(\mu,u,\varepsilon)} du\Bigr)^q\Bigr]\Bigr],
\label{M1v}
\end{equation}
\begin{align}
\lim\limits_{t\rightarrow\infty} e^{\mu\log\varepsilon(t) q^2} e^{\mu(\log \varepsilon(t)-\kappa)\frac{q(q+1)}{2}} 
{\bf E} \Bigl[\Bigl(\int_0^1 u^{\mu q}\,e^{S^{(2)}_t(\mu,u)} du\Bigr)^q\Bigr] = &
\lim\limits_{\varepsilon\rightarrow 0} e^{\mu\log\varepsilon q^2} e^{\mu(\log \varepsilon-\kappa)\frac{q(q+1)}{2}}\star
\nonumber \\ &\star \Bigl[\lim\limits_{t\rightarrow\infty} 
{\bf E} \Bigl[\Bigl(\int_0^1 u^{\mu q} \, e^{S^{(2)}_t(\mu,u,\varepsilon)} du\Bigr)^q\Bigr]\Bigr].
\label{M1vstrong}
\end{align}
\end{conjecture}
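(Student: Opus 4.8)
The plan is to give a heuristic derivation in the spirit announced in the introduction (a ``physicist's proof''), taking the iterated limits on the right-hand sides in the statement of Conjecture \ref{Mellin} as given, their values being supplied by Conjecture \ref{Mellinvar}, and showing that the merged limit $\varepsilon=\varepsilon(t)$ produces the same answer. It is enough to treat the statistics $S^{(1)}_t(\mu,u)$, since the $S^{(2)}_t(\mu,u)$ versions follow after the substitutions \eqref{transl1}--\eqref{transl2} with \eqref{vart2} in place of \eqref{vart}; and the argument for \eqref{M2v} and for \eqref{M1v} is essentially identical, the weight $u^{-\mu q}$ affecting only the analysis near $u=0$, so I describe it for \eqref{M1v}. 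Throughout, write
\[
F(t,\varepsilon)\triangleq e^{\mu(\log\varepsilon-\kappa)\frac{q(q+1)}{2}}\;{\bf E}\Bigl[\Bigl(\int_0^1 e^{S^{(1)}_t(\mu,u,\varepsilon)}\,du\Bigr)^q\Bigr].
\]

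First I would fix $\varepsilon>0$ and promote Theorem \ref{strong} to the process level. Applied to the finite families $\pi\sqrt{2\mu}\,f^{(1)}_{\varepsilon,u_1},\dots,\pi\sqrt{2\mu}\,f^{(1)}_{\varepsilon,u_k}$, it gives convergence of the finite-dimensional distributions of $u\mapsto S^{(1)}_t(\mu,u,\varepsilon)$ to those of a centered Gaussian field $\mathcal G_\varepsilon$ with covariance \eqref{limcov1}; for fixed $\varepsilon$ this covariance is continuous on $(0,1)^2$ and smooth off the diagonal, so a Kolmogorov-type tightness bound upgrades this to convergence in law in $\mathcal C(0,1)$. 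Since $\mathcal G_\varepsilon$ has variance $O(-\log\varepsilon)$, combining the continuous mapping theorem with a uniform-integrability bound on ${\bf E}\bigl[(\int_0^1 e^{S^{(1)}_t(\mu,u,\varepsilon)}\,du)^{q+\delta}\bigr]$ --- which follows from the asymptotic Gaussianity via Minkowski's inequality, the exponential moments of a Gaussian of variance $\sim -\log\varepsilon$ being polynomial in $\varepsilon^{-1}$, together with the self-similar (multifractal) behaviour near $u=0$ that makes the $u$-integral converge in the stated range of $q$ --- yields $\lim_{t\to\infty}{\bf E}[(\int_0^1 e^{S^{(1)}_t(\mu,u,\varepsilon)}\,du)^q]={\bf E}[(\int_0^1 e^{\mathcal G_\varepsilon(u)}\,du)^q]$; call the latter $G(\varepsilon)$. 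Conjecture \ref{Mellinvar} supplies the value of $\lim_{\varepsilon\to 0}e^{\mu(\log\varepsilon-\kappa)q(q+1)/2}G(\varepsilon)$ as the right-hand side of \eqref{M1}, obtained in Sections 3--4 by recognizing the renormalized limit of $\mathcal G_\varepsilon$ as the gaussian free field underlying the limit lognormal measure and invoking the Selberg integral distribution; denote this value by $M(q)$.

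The new assertion is that $\lim_{t\to\infty}F(t,\varepsilon(t))=M(q)$ for every $\varepsilon(t)$ obeying \eqref{vart}. Because $e^{\mu(\log\varepsilon-\kappa)q(q+1)/2}G(\varepsilon)\to M(q)$ as $\varepsilon\to 0$, and $\varepsilon(t)\to 0$, it suffices to show that the \emph{joint} discrepancy $F(t,\varepsilon(t))-e^{\mu(\log\varepsilon(t)-\kappa)q(q+1)/2}G(\varepsilon(t))$ tends to $0$; I would split it into three contributions, each vanishing under the slow-decay hypothesis. First, the $O(1/\lambda(t))$ correction in the representation \eqref{resultS} of $S^{(1)}_t(\mu,u,\varepsilon)$ through $S(\cdot)$: here the constant is $O(||f^{(1)}_{\varepsilon,u}||_1/\lambda(t))=O(u/\lambda(t))$, negligible by \eqref{lt} no matter how $\varepsilon(t)$ decays --- this is the content of Lemma \ref{modified}. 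Second, the gap between the finite-$t$ covariance of $S^{(1)}_t(\mu,u,\varepsilon)$ given by \eqref{limitvar} and the limiting covariance \eqref{limcov1} given by \eqref{scalarf}: by Lemma \ref{limit} this gap is $O(\lambda(t)/(\varepsilon\log t))$ for smoothed indicators, hence $\to 0$ precisely because \eqref{vart} forces $\varepsilon(t)\gg\lambda(t)/\log t$. Third, the rate of convergence of $S^{(1)}_t(\mu,u,\varepsilon(t))$ to a Gaussian, with its dependence on the test function --- and so on $\varepsilon(t)$ --- made quantitative; once more the ``slow'' decay of $\varepsilon(t)$ in \eqref{vart} is what absorbs it. Granting that the three contributions vanish, one gets $F(t,\varepsilon(t))\to M(q)$, which is \eqref{M1v}, and the identities \eqref{M2v}, \eqref{M2vstrong}, \eqref{M1vstrong} together with the $S^{(2)}$ statements follow in the same manner after \eqref{transl1}--\eqref{transl2}.

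The hard part is the third contribution. The Bourgade--Kuan and Rodgers theorems are convergence-in-law statements carrying no quantitative Berry--Esseen rate, still less one in which the constant is tracked through the test function and hence through $\varepsilon$; and coupling such a rate with the uniform integrability needed to pass the $q$-th power of the $u$-integral through the limit is delicate because ${\bf Var}\,S^{(1)}_t(\mu,u,\varepsilon)\sim -\mu\log\varepsilon$ diverges, so one is renormalizing an exponential functional whose exponential moments blow up polynomially in $\varepsilon^{-1}$ and whose $u$-integral degenerates near $u=0$. This is where a fully rigorous argument is currently out of reach, which is why the statement is recorded as a conjecture; the heuristic above is meant only to display \eqref{vart} and \eqref{vart2} as the natural conditions under which the iterated and merged limits agree. \qed
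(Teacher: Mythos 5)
Your proposal follows essentially the same route as the paper's heuristic derivation: it reduces the merged limit to the iterated one by combining the asymptotic gaussianity of $S^{(i)}_t(\mu,u)$ at scale $\varepsilon(t)$ (Lemma \ref{modified}), the covariance approximation of Lemma \ref{limit} with error $O\bigl(\lambda(t)/(\varepsilon(t)\log t)\bigr)$ removed by \eqref{vart} and \eqref{vart2}, and the unproved quantitative near-gaussianity assumption (the paper's \eqref{theassumption}), which you correctly identify as the essential gap that keeps the statement a conjecture. The only slip is attributing the $O(1/\lambda(t))$ error in \eqref{resultS} to Lemma \ref{modified}, which is in fact the modified central limit theorem (the error bound in \eqref{resultS} is handled in footnote 7 and the appendix); this does not affect the substance.
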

The following corollaries can be formulated with either the double or single  limits as in Conjectures \ref{Mellinvar} and \ref{Mellin}. We will state them with the single limit for notational simplicity. All corollaries apply to both statistics, cf. \eqref{transl1}
and \eqref{transl2} for the translation. Note that in Corollaries \ref{posmoments}, \ref{negmoments}, and
\ref{jointintegralmom} we have $q=n, \,-n, \,N,$ respectively, so that the multiplier $u^{\mu q}$ in \eqref{transl1} and the scaling factor 
in \eqref{transl2} are adjusted accordingly, when translated for the second statistic. 
\begin{corollary}[Positive integral moments and Selberg integral]\label{posmoments}
Let $n\in\mathbb{N}$ such that $n<\tau.$
\begin{align}
\lim\limits_{t\rightarrow\infty}  e^{\mu(\log \varepsilon(t)-\kappa)\frac{n(n+1)}{2}} 
{\bf E} \Bigl[\Bigl(\int_0^1 u^{-\mu n} e^{S^{(1)}_t(\mu,u)} du\Bigr)^n\Bigr] & = 
\int\limits_{[0,\,1]^n} \prod\limits_{i<j}^n |s_i-s_j|^{-\mu} ds_1\cdots ds_n,  \label{selberg2}\\
& = \prod_{k=0}^{n-1} \frac{\Gamma(1-(k+1)/\tau)}{\Gamma(1-1/\tau)}
\frac{\Gamma^2(1-k/\tau)}
{\Gamma(2-(n+k-1)/\tau)}, \label{mom2plus} \\
\lim\limits_{t\rightarrow\infty} e^{\mu(\log \varepsilon(t)-\kappa)\frac{n(n+1)}{2}}  
{\bf E} \Bigl[\Bigl(\int_0^1 e^{S^{(1)}_t(\mu,u)} du\Bigr)^n\Bigr] & = 
\int\limits_{[0,\,1]^n} \prod_{i=1}^n s_i^{\mu n}\, \prod\limits_{i<j}^n |s_i-s_j|^{-\mu} ds_1\cdots ds_n,  \label{selberg1}\\
& = \prod_{k=0}^{n-1} \frac{\Gamma(1-(k+1)/\tau)}{\Gamma(1-1/\tau)}
\frac{\Gamma(1+2n/\tau-k/\tau)\Gamma(1-k/\tau)}
{\Gamma(2+n/\tau-(k-1)/\tau)}. \label{mom1plus}
\end{align}
\end{corollary}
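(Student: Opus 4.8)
The plan is to derive the two identities in three stages: first reduce the limiting moment to a multiple integral (the Selberg-type integral), second evaluate that integral using the known closed form, and third rewrite the result in terms of the double gamma function so as to verify consistency with Conjecture \ref{Mellin}. For the first stage I would start from \eqref{M1v} (resp.\ the $u^{-\mu q}$ version \eqref{M2v}) specialized to $q=n\in\mathbb{N}$ with $n<\tau$, so that the right-hand side is the $\varepsilon\to0$ limit of $e^{\mu(\log\varepsilon-\kappa)n(n+1)/2}$ times $\lim_{t\to\infty}{\bf E}\bigl[(\int_0^1 e^{S^{(1)}_t(\mu,u,\varepsilon)}\,du)^n\bigr]$. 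Expanding the $n$th power as an $n$-fold integral $\int_{[0,1]^n}{\bf E}\bigl[\prod_{i=1}^n e^{S^{(1)}_t(\mu,u_i,\varepsilon)}\bigr]\,du_1\cdots du_n$, I would use Theorem \ref{strong}: the vector $(S_t(f^{(1)}_{\varepsilon,u_1}),\dots,S_t(f^{(1)}_{\varepsilon,u_n}))$ converges in law to a centered gaussian vector with covariance $\langle f^{(1)}_{\varepsilon,u_i},f^{(1)}_{\varepsilon,u_j}\rangle$, hence (after justifying uniform integrability so that convergence in law upgrades to convergence of the exponential moments) the inner $t$-limit equals $\int_{[0,1]^n}\exp\bigl(\tfrac12\sum_{i,j}2\pi^2\mu\,\langle f^{(1)}_{\varepsilon,u_i},f^{(1)}_{\varepsilon,u_j}\rangle\cdot\tfrac1{2\pi^2}\bigr)\,d\mathbf{u}$, where the constant $\pi\sqrt{2\mu}$ in \eqref{Su} supplies the factor $2\pi^2\mu$. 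Using \eqref{scalar}, \eqref{fuprime1} and the definitions \eqref{kappa}, \eqref{limcov1} of $\kappa$ and the limiting covariance, the diagonal terms contribute $-2\mu(\log\varepsilon-\kappa-\log u_i)$ and the off-diagonal terms $-\mu(\log\varepsilon-\kappa+\log|u_i-u_j|-\log u_i-\log u_j)$ up to $O(\varepsilon)$; collecting the $\log\varepsilon$ and $\kappa$ pieces produces exactly $e^{-\mu(\log\varepsilon-\kappa)n(n+1)/2}$ times $\int_{[0,1]^n}\prod_{i<j}|u_i-u_j|^{-\mu}\prod_i u_i^{\mu(n-1)}\cdot u_i^{\mu}\,d\mathbf{u}$ — and one checks the powers of $u_i$ cancel against the $u^{-\mu n}$ weight in \eqref{M2v}, leaving $\int_{[0,1]^n}\prod_{i<j}|u_i-u_j|^{-\mu}\,d\mathbf{u}$, which is \eqref{selberg2}; without that weight one instead retains $\prod_i u_i^{\mu n}$, giving \eqref{selberg1}. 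The scaling prefactor $e^{\mu(\log\varepsilon-\kappa)n(n+1)/2}$ then cancels the divergence exactly, so the $\varepsilon\to0$ limit is finite and equals the stated multiple integral.

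For the second stage I invoke Selberg's integral evaluation. The integral in \eqref{selberg2} is the classical Selberg integral $S_n(\alpha,\beta,\gamma)$ with $\alpha=\beta=1$ and $\gamma=-\mu/2$, namely $\prod_{k=0}^{n-1}\frac{\Gamma(1+k\gamma')\Gamma(1+k\gamma')\Gamma(1+(k+1)\gamma')}{\Gamma(1)\Gamma(2+(n+k-1)\gamma')}$ with $\gamma'=-\mu/2=-1/\tau$; substituting $\gamma'=-1/\tau$ and simplifying $\Gamma(1-1/\tau)$ in the denominator gives \eqref{mom2plus}. For \eqref{selberg1} the relevant integral is Selberg's with $\alpha=1+\mu n/2=1+n/\tau$, $\beta=1$, $\gamma=-\mu/2$, and the same substitution plus the standard product manipulation yields \eqref{mom1plus}. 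Here one must check that the condition $n<\tau$ guarantees $\Re\gamma>-\min(1/n,\dots)$, i.e.\ that Selberg's convergence hypotheses hold, which they do precisely because $\mu<2$ and $n<\tau=2/\mu$ force $(n-1)\mu/2<1$.

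The third, optional, stage is the consistency check: specialize \eqref{M1} and the $u^{-\mu q}$ line \eqref{M2} at $q=n$ and verify, using the functional equation $\Gamma_2(z|\tau)/\Gamma_2(z+1|\tau)=\tau^{z/\tau-1/2}\Gamma(z/\tau)/\sqrt{2\pi}$ repeatedly, that the ratio of double gamma functions telescopes to the finite $\Gamma$-products in \eqref{mom2plus} and \eqref{mom1plus}; this is routine but worth recording since it confirms the normalization constants $\tau^{q/\tau}(2\pi)^q\Gamma^{-q}(1-1/\tau)$ are the right ones. The main obstacle is the first stage, and specifically the exchange of limits and the uniform-integrability step: convergence in law of the gaussian vector gives convergence of bounded continuous functionals, but $\exp(S^{(1)}_t)$ is unbounded, so one needs a priori moment bounds on $\sum_\gamma f(\gamma(t))$ — uniform in $t$ and in the $u_i$ — to pass to the limit inside the $n$-fold integral and to justify dominating the $O(\varepsilon)$ error terms uniformly as $\varepsilon\to0$. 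Since this is a ``physicist's proof'' in the sense of the paper, I expect the argument to proceed formally at this point, treating the gaussian limit as exact and deferring the integrability estimates; the genuinely rigorous content is then the Selberg evaluation and the double-gamma reduction, both of which are unconditional.
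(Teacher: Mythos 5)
Your proposal is correct and follows essentially the paper's own route: your Stage-1 gaussian moment computation (covariances from Lemma \ref{main}, collection of the $\log\varepsilon$ and $\kappa$ terms cancelling the prefactor $e^{\mu(\log\varepsilon-\kappa)\frac{n(n+1)}{2}}$) is exactly the paper's Lemma \ref{single}, stated there for the centered free field $\bar{\omega}_{\mu,\varepsilon}$ after the heuristic identification with $S^{(1)}_t$, and the closed forms \eqref{mom2plus} and \eqref{mom1plus} are obtained from Selberg's evaluation, which the paper invokes via \eqref{posmomentformula} together with \eqref{M2}. One small slip to fix: in \eqref{selberg1} the weight is $s_i^{\mu n}$, so the Selberg parameter is $\alpha=1+\mu n=1+2n/\tau$ (not $1+\mu n/2=1+n/\tau$), and the convergence condition is $n\mu/2<1$, i.e. $n<\tau$; with these corrections your product indeed matches \eqref{mom1plus}.
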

\begin{corollary}[Negative integral moments]\label{negmoments}
Let $n\in\mathbb{N}.$
\begin{equation}
\lim\limits_{t\rightarrow\infty}  e^{\mu(\log \varepsilon(t)-\kappa)\frac{n(n-1)}{2}} 
{\bf E} \Bigl[\Bigl(\int_0^1 u^{\mu n} e^{S^{(1)}_t(\mu,u)} du\Bigr)^{-n}\Bigr]  =
\prod_{k=0}^{n-1}
\frac{\Gamma\bigl(2+(n+2+k)/\tau\bigr)
\Gamma\bigl(1-1/\tau\bigr) }{
\Gamma^2\bigl(1+(k+1)/\tau\bigr)
\Gamma\bigl(1+k/\tau\bigr) }.\label{mom2minus}
\end{equation}
Let $n\in\mathbb{N}$ such that $n<(\tau+1)/2.$
\begin{equation}
\lim\limits_{t\rightarrow\infty} e^{\mu(\log \varepsilon(t)-\kappa)\frac{n(n-1)}{2}} 
{\bf E} \Bigl[\Bigl(\int_0^1 e^{S^{(1)}_t(\mu,u)} du\Bigr)^{-n}\Bigr] = 
\prod_{k=0}^{n-1}
\frac{\Gamma\bigl(2+(-n+2+k)/\tau\bigr)
\Gamma\bigl(1-1/\tau\bigr) }{
\Gamma\bigl(1-2n/\tau+(k+1)/\tau\bigr)\Gamma\bigl(1+(k+1)/\tau\bigr)
\Gamma\bigl(1+k/\tau\bigr) }. \label{mom1minus}
\end{equation}
\end{corollary}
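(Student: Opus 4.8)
Granting the conjecture, the plan is to read both identities off the closed-form Mellin-type transforms of Conjectures \ref{Mellinvar} and \ref{Mellin} by specializing the complex parameter to $q=-n$, and then to collapse the resulting ratios of Barnes double gamma functions into finite products of Euler gamma functions by means of the functional equation $\Gamma_2(z\,|\,\tau)/\Gamma_2(z+1\,|\,\tau)=\tau^{z/\tau-1/2}(2\pi)^{-1/2}\,\Gamma(z/\tau)$. Nothing probabilistic enters beyond the conjecture itself; the rest is elementary $\Gamma_2$-algebra.

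First I would establish \eqref{mom2minus}. Setting $q=-n$ in the first-statistic transform \eqref{M2v} (equivalently in its double-limit form, whose left-hand side is displayed in \eqref{M2transf}) turns the weight $u^{-\mu q}$ into $u^{\mu n}$, the exponent $q$ into $-n$, and the rescaling power $\tfrac{q(q+1)}{2}$ into $\tfrac{n(n-1)}{2}$, so the left-hand side of \eqref{mom2minus} is precisely the value of that transform at $q=-n$. Since $\tau>1$ we have $-n<\tau$ for every $n\in\mathbb N$, so this specialization is unconditionally admissible, which is why \eqref{mom2minus} carries no upper restriction on $n$. The right-hand side is then produced by putting $q=-n$ in the closed form \eqref{M2}: the prefactor becomes $\tau^{-n/\tau}(2\pi)^{-n}\Gamma^{n}(1-1/\tau)$, while every surviving double-gamma factor has the shape $\Gamma_2(a+\tau\,|\,\tau)/\Gamma_2(a+n+\tau\,|\,\tau)$ or $\Gamma_2(a+2\tau\,|\,\tau)/\Gamma_2(a+2n+2\tau\,|\,\tau)$ with $a\ge 0$, and telescopes under the functional equation into a product over $k=0,\dots,n-1$ of powers of $\tau$, powers of $\sqrt{2\pi}$, and one factor $\Gamma\bigl(1+\tfrac{a+k}{\tau}\bigr)^{\pm1}$. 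Distributing the prefactor's $\Gamma^{n}(1-1/\tau)$ one copy per term and collecting, the powers of $\tau$ and of $2\pi$ cancel identically, leaving $\prod_{k=0}^{n-1}\Gamma\bigl(2+(n+2+k)/\tau\bigr)\,\Gamma(1-1/\tau)\,\bigl[\Gamma^{2}(1+(k+1)/\tau)\,\Gamma(1+k/\tau)\bigr]^{-1}$.

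For \eqref{mom1minus} I would repeat the argument starting from the unweighted transform \eqref{M1v} and its closed form \eqref{M1}, whose admissible range is $-(\tau+1)/2<\Re(q)<\tau$, so $q=-n$ is permitted exactly when $n<(\tau+1)/2$. This restriction is not an artifact: at $q=-n$ the argument of the double gamma function in the denominator of \eqref{M1} equals $1-2n+\tau$, and since $\Gamma_2(w\,|\,\tau)$ is zero-free, finiteness of the transform forces $1-2n+\tau>0$ — otherwise one sits at a pole of $\Gamma_2^{-1}(\,\cdot\,|\,\tau)$, whose poles and zeros are located at $w=-(k_1+k_2\tau)$, $k_1,k_2\in\mathbb N$ — that is, $n<(\tau+1)/2$. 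Telescoping the four $\Gamma_2$-ratios of \eqref{M1} as above, with the extra factor $\Gamma_2(1+q+\tau\,|\,\tau)/\Gamma_2(1+2q+\tau\,|\,\tau)$ now contributing at $q=-n$ the $\Gamma\bigl(1-2n/\tau+(k+1)/\tau\bigr)$ in the denominator, yields the product in \eqref{mom1minus}. Finally, the companion formulas for the second statistic $S^{(2)}$ follow from the translation rules \eqref{transl1}--\eqref{transl2} with $q$ replaced by $-n$ (so the multiplier reads $u^{-\mu n}$ and the scaling factor acquires the $e^{\mu\log\varepsilon(t)\,n^{2}}$ correction), as anticipated in the remarks preceding the corollary — or, equivalently, by running the same $\Gamma_2$-reduction directly on the $S^{(2)}$ closed forms.

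The one genuine chore, and the step to watch, is the bookkeeping in the $\Gamma_2$-to-$\Gamma$ collapse: for each ratio one must record whether the telescoping runs up or down in the argument, reindex the $k$-products consistently, and verify that the accumulated exponents of $\tau$ and of $2\pi$ from the $O(n)$ telescoped factors cancel against each other and against the prefactor. The accompanying analytic point — that at $q=-n$ no double gamma function occurring in \eqref{M2} (unconditionally), or in \eqref{M1} under $n<(\tau+1)/2$, falls on a pole or zero of $\Gamma_2(\,\cdot\,|\,\tau)$, so that the specialization of the meromorphic identities of the conjectures is legitimate — should be settled first but presents no difficulty.
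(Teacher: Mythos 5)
Your proposal is correct and takes essentially the paper's route: the paper's own proof also consists of specializing Conjecture \ref{Mellin} at $q=-n$ and then invoking the known negative-moment formula \eqref{negmomentformula} of the Selberg integral distribution (with $\lambda_1=\lambda_2=0$ for \eqref{mom2minus} and $\lambda_1=\mu q$, $\lambda_2=0$ for \eqref{mom1minus}); your telescoping of \eqref{M2} and \eqref{M1} via the functional equation of $\Gamma_2$ merely re-derives those special cases instead of citing them, which is harmless (and, for \eqref{mom1minus}, sidesteps applying \eqref{negmomentformula} at $\lambda_1=-\mu n$ outside the range stated in Theorem \ref{BSM}). One small correction: $\Gamma_2^{-1}(\cdot\,|\,\tau)$ has zeros, not poles, at $w=-(k_1+k_2\tau)$, and the real source of the restriction $n<(\tau+1)/2$ is simply that $q=-n$ must lie in the admissible strip $-(\tau+1)/2<\Re(q)<\tau$ of \eqref{M1}/\eqref{M1v} (equivalently, integrability of $u^{\mu q}$ near $u=0$, cf. \eqref{lawlargenumbers}), which is exactly the condition you impose, so your argument is unaffected.
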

\begin{corollary}[Joint integral moments]\label{jointintegralmom}
Let $n, m\in\mathbb{N}$ and denote $N\triangleq n+m,$
$N<2/\tau.$ Let $I_1$ and $I_2$ be non-overlapping subintervals of the unit
interval. Then,
\begin{gather}
\lim\limits_{t\rightarrow\infty} e^{\mu(\log \varepsilon(t)-\kappa)\frac{N(N+1)}{2}}  
{\bf E} \Bigl[\Bigl(\int_{I_1} u^{-\mu N}e^{S^{(1)}_t(\mu,u)} du\Bigr)^n
\Bigl(\int_{I_2} u^{-\mu N}e^{S^{(1)}_t(\mu,u)} du\Bigr)^m\Bigr] = 
\int\limits_{I_1^n \times I_2^m}  \prod\limits_{i<j}^N |s_i-s_j|^{-\mu} ds_1\cdots ds_N, \\
\lim\limits_{t\rightarrow\infty} e^{\mu(\log \varepsilon(t)-\kappa)\frac{N(N+1)}{2}}  
{\bf E} \Bigl[\Bigl(\int_{I_1} e^{S^{(1)}_t(\mu,u)} du\Bigr)^n
\Bigl(\int_{I_2} e^{S^{(1)}_t(\mu,u)} du\Bigr)^m\Bigr] = 
\int\limits_{I_1^n \times I_2^m} \prod_{i=1}^N
s_i^{\mu\,N} \prod\limits_{i<j}^N |s_i-s_j|^{-\mu} ds_1\cdots ds_N .
\end{gather}
\end{corollary}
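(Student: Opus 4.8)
The plan is to reduce the joint moment to a finite-dimensional integral, evaluate the resulting Gaussian expectation with the help of Theorem~\ref{strong} and the covariance formula \eqref{limcov1}, and then exploit the fact that the rescaling prefactor $e^{\mu(\log\varepsilon(t)-\kappa)N(N+1)/2}$ is engineered precisely to absorb the $\log\varepsilon-\kappa$ divergence. I would treat the $S^{(1)}$ statistics explicitly; the $S^{(2)}$ version then follows from the dictionary \eqref{transl1}--\eqref{transl2} with $q$ specialized to the integer $N$, exactly as noted before the corollary.

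Since $n$ and $m$ are positive integers, the first step is to expand the product of powers as an $N$-fold integral over $I_1^n\times I_2^m$ and interchange expectation with integration by Fubini, producing an integrand of the form $\bigl(\prod_{i=1}^{N}s_i^{-\mu N}\bigr)\,{\bf E}\bigl[\exp\bigl(\sum_{i=1}^{N}S^{(1)}_t(\mu,s_i)\bigr)\bigr]$ (and the analogue without the weight $\prod_i s_i^{-\mu N}$ for the second identity). By Theorem~\ref{strong} together with the computation recorded in \eqref{limcov1}, the vector $\bigl(S^{(1)}_t(\mu,s_1),\dots,S^{(1)}_t(\mu,s_N)\bigr)$ converges in law, as $t\to\infty$, to a centered Gaussian vector whose covariance is given by \eqref{limcov1}; granting convergence of the associated exponential moments (discussed below), the inner expectation tends to the Gaussian Laplace transform $\exp\bigl(\tfrac12\sum_{i,j}{\bf Cov}(S^{(1)}(\mu,s_i),S^{(1)}(\mu,s_j))\bigr)$.

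One then substitutes \eqref{limcov1}: the $N$ diagonal terms each contribute $-2\mu(\log\varepsilon-\kappa-\log s_i)$ and the $N(N-1)$ off-diagonal terms each contribute $-\mu(\log\varepsilon-\kappa+\log|s_i-s_j|-\log s_i-\log s_j)$, the non-overlapping hypothesis on $I_1,I_2$ guaranteeing that $|s_i-s_j|$ stays bounded below across the two blocks so that no spurious singularity appears and the $O(\varepsilon)$ errors are uniform on $I_1^n\times I_2^m$. Using $\sum_{i<j}(\log s_i+\log s_j)=(N-1)\sum_i\log s_i$ and $N+\binom{N}{2}=\tfrac{N(N+1)}{2}$, the half-variance collapses to $-\mu\tfrac{N(N+1)}{2}(\log\varepsilon-\kappa)+\mu N\sum_i\log s_i-\mu\sum_{i<j}\log|s_i-s_j|+O(\varepsilon)$. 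Exponentiating and multiplying by $\prod_i s_i^{-\mu N}$, the powers of $s_i$ cancel and the integrand reduces to $e^{-\mu N(N+1)(\log\varepsilon-\kappa)/2}\prod_{i<j}|s_i-s_j|^{-\mu}(1+O(\varepsilon))$; multiplying by the rescaling prefactor cancels the divergent exponential exactly, and letting $\varepsilon\to0$ (with the $t$-limit taken first, as in Conjecture~\ref{Mellin}, so the $O(\varepsilon)$ corrections vanish) one is left with $\int_{I_1^n\times I_2^m}\prod_{i<j}^{N}|s_i-s_j|^{-\mu}\,ds_1\cdots ds_N$, which is finite under the stated hypothesis on $N$ because the only remaining singularities are the within-block coincidences $s_i=s_j$. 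The second identity is obtained in the same way with the weight $u^{-\mu N}$ absent, so the surviving factor $\prod_i s_i^{\mu N}$ remains in the integrand, giving the advertised Selberg-type integral.

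The step I expect to be the main obstacle is the interchange of $\lim_{t\to\infty}$ with the $N$-fold integral, i.e.\ upgrading the convergence in law of Theorem~\ref{strong} to convergence of the exponential moments ${\bf E}\bigl[\exp\bigl(\sum_i S^{(1)}_t(\mu,s_i)\bigr)\bigr]$, uniformly for $(s_1,\dots,s_N)$ in compact subsets of $I_1^n\times I_2^m$. This uniform integrability is precisely the input provided by the main conjecture --- equivalently, by control of the Laplace transform of the BKR statistic in the spirit of \cite{BK} --- and once it is granted, the rest is the elementary Gaussian computation above.
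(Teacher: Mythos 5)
Your argument is correct and is essentially the paper's own: the paper disposes of this corollary as a special case of Lemma \ref{joint}, whose content is exactly the Gaussian covariance collapse you carry out (there performed on the discretized centered free field $\bar{\omega}_{\mu,\varepsilon}$, which stands in for $S^{(1)}_t(\mu,u)$ via the same heuristic identification you invoke), so your combinatorial cancellation of the $\log\varepsilon-\kappa$ divergence and the surviving Selberg-type integrand coincide with the paper's computation. One small slip: for the $t$-dependent scale $\varepsilon(t)$ the Gaussian convergence is supplied by Lemma \ref{modified} together with Lemma \ref{limit} (plus the assumption \eqref{theassumption}), not by Theorem \ref{strong}, which the paper notes does not apply when the variance diverges; this is precisely the uniform-integrability caveat you yourself flag at the end.
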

\begin{corollary}[Asymptotic expansions]\label{asymptotic}
Given $q\in\mathbb{C},$ then in the limit $\mu\rightarrow 0$ we have the
asymptotic expansions\footnote{As usual, $\zeta(s)$ denotes the Riemann zeta function and $B_n(s)$ the $n$th Bernoulli polynomial. Note that $\zeta(1)$ never enters any of the final formulas as the coefficient it multiplies is identically zero.}
\begin{gather}
\lim\limits_{t\rightarrow\infty} e^{\mu(\log \varepsilon(t)-\kappa)\frac{q(q+1)}{2}} 
{\bf E} \Bigl[\Bigl(\int_0^1 u^{-\mu q} e^{S^{(1)}_t(\mu,u)} du\Bigr)^q\Bigr]\thicksim \exp\Bigl(\sum\limits_{r=0}^\infty
\Bigl(\frac{\mu}{2}\Bigr)^{r+1}
\frac{1}{r+1}\Bigl[\zeta(r+1)\times  \nonumber \\
\times\bigl[\frac{B_{r+2}(q+1)
+2B_{r+2}(q) -3B_{r+2}}{r+2} -q\bigr]  
+\bigl(\zeta(r+1)-1\bigr)\bigl[\frac{B_{r+2}(q-1)-B_{r+2}(2q-1)}{r+2}\bigr]\Bigr]\Bigr)
\label{asymptotic2}.
\end{gather}
\begin{gather}
\lim\limits_{t\rightarrow\infty} e^{\mu(\log \varepsilon(t)-\kappa)\frac{q(q+1)}{2}} 
{\bf E} \Bigl[\Bigl(\int_0^1 e^{S^{(1)}_t(\mu,u)} du\Bigr)^q\Bigr] \thicksim \exp\Bigl(\sum\limits_{r=0}^\infty
\Bigl(\frac{\mu}{2}\Bigr)^{r+1}
\frac{1}{r+1}\Bigl[   \zeta(r+1)\bigl[\frac{B_{r+2}(q+1)
+B_{r+2}(q)}{r+2} + \nonumber \\
+ \frac{B_{r+2}(-q)-B_{r+2}(-2q)-2B_{r+2}}{r+2} -q\bigr]  
+\bigl(\zeta(r+1)-1\bigr)\bigl[\frac{B_{r+2}(-q-1)-B_{r+2}(-1)}{r+2}\bigr]\Bigr]\Bigr)
\label{asymptotic1}.
\end{gather}
\end{corollary}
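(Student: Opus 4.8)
The plan is to obtain both expansions by expanding in $\mu$ the closed-form evaluations supplied by the main conjecture. By Conjectures \ref{Mellinvar} and \ref{Mellin} the left-hand side of \eqref{asymptotic2} equals the right-hand side \eqref{M2} and the left-hand side of \eqref{asymptotic1} equals \eqref{M1}; the corresponding statements for the $S^{(2)}$ statistic then follow via the translation \eqref{transl1}--\eqref{transl2}. Since $\tau=2/\mu$, the limit $\mu\to0$ is the limit $\tau\to\infty$ with $q$ fixed, and, up to the elementary factor $\tau^{q/\tau}(2\pi)^{q}\Gamma^{-q}(1-1/\tau)$, both \eqref{M2} and \eqref{M1} are finite products of ratios of the form $\Gamma_2(c_0+c_1\tau\,|\,\tau)$ with $c_1\in\{1,2\}$ and $c_0$ affine in $q$ (the occurring $c_0$ are $-q,\,1-q,\,2-q,\,2-2q,\,1+q,\,1+2q,\,2+q$ and constants). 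Taking logarithms, the whole statement thus reduces to a single analytic input: the asymptotic expansion of $\log\Gamma_2(c_0+c_1\tau\,|\,\tau)$ in powers of $1/\tau=\mu/2$.

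That input I would derive from $\log\Gamma_2(w\,|\,\tau)=\partial_s|_{s=0}\zeta_2(s,w\,|\,\tau)$ together with the identity $\zeta_2(s,w\,|\,\tau)=\sum_{k\ge0}\zeta(s,w+k\tau)$, obtained by summing first over $k_1$, where $\zeta(s,a)$ is the Hurwitz zeta function. Because $w+k\tau\to\infty$ uniformly as $\tau\to\infty$, one may insert the classical large-$a$ expansion $\zeta(s,a)\sim\frac{a^{1-s}}{s-1}+\frac{a^{-s}}{2}+\sum_{n\ge1}\frac{B_{2n}}{(2n)!}\frac{\Gamma(s+2n-1)}{\Gamma(s)}a^{-s-2n+1}$ into every summand, resum in $k$ via $\sum_{k\ge0}(c_0+(c_1+k)\tau)^{-\sigma}=\tau^{-\sigma}\zeta(\sigma,c_1+c_0/\tau)$, Taylor-expand $\zeta(\sigma,c_1+x)=\sum_{p\ge0}\binom{-\sigma}{p}\bigl(\zeta(\sigma+p)-\sum_{\ell=1}^{c_1-1}\ell^{-\sigma-p}\bigr)x^{p}$, and differentiate at $s=0$ (which brings in $\log\Gamma(c_1+c_0/\tau)$ and polygamma values at $c_1$). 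At order $(1/\tau)^{r+1}$ this produces a combination of Bernoulli polynomials $B_{r+2}$ evaluated at integer shifts of $c_0$ --- hence of $q$ --- multiplied by $\zeta(r+1)$ when $c_1=1$ and by $\zeta(r+1)-1$ when $c_1=2$; the latter because $\zeta(\sigma,2+x)=\zeta(\sigma,1+x)-(1+x)^{-\sigma}$ removes the $\ell=1$ term. This is precisely the origin of the two arithmetic brackets in \eqref{asymptotic2} and \eqref{asymptotic1}: the $\Gamma_2(\,\cdot+\tau\,|\,\tau)$ factors feed the $\zeta(r+1)$ bracket and the $\Gamma_2(\,\cdot+2\tau\,|\,\tau)$ factors feed the $\zeta(r+1)-1$ bracket. (Alternatively, this $\log\Gamma_2$ expansion may simply be quoted from \cite{Ruij}, \cite{MeIMRN}, \cite{Me14}.)

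It then remains to substitute the expansion into $\log$ of \eqref{M2} and \eqref{M1}, expand $\log\Gamma(1-1/\tau)$ and $\frac{q}{\tau}\log\tau$ in $1/\tau$ as well, collect powers of $\mu/2$, and fold the boundary contributions from the $a^{1-s}/(s-1)$ and $a^{-s}/2$ pieces into the Bernoulli combinations using the identity $B_{r+2}(x+1)-B_{r+2}(x)=(r+2)x^{r+1}$. Along the way one verifies the forced cancellations: the terms proportional to $\tau$, to $\log\tau$, and to Euler's constant cancel among the $\Gamma_2$-ratios and the prefactor, and the formal $r=0$ coefficient of $\zeta(1)$ vanishes identically --- a short Bernoulli-polynomial identity, which is exactly the phenomenon flagged in the footnote to the Corollary. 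The surviving terms then reproduce the brackets displayed in \eqref{asymptotic2} and \eqref{asymptotic1}. The main obstacle I foresee is purely organizational: marshalling the $\Gamma_2$-factors, with their multiplicities and shifted arguments $q,\,q\pm1,\,2q$, so that the Bernoulli-polynomial sums collapse to the stated closed form, together with the rigorous justification of termwise differentiation at $s=0$ and of the rearrangement of the resulting asymptotic (non-convergent) double series. For those technical points the cleanest route is to invoke the already-established analytic properties of the double gamma function and of the Mellin transform from \cite{Ruij}, \cite{MeIMRN}, \cite{Me14}; everything else is bookkeeping.
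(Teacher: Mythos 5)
Your proposal is sound, but it executes the computation differently from the paper's own (very short) proof. The paper does not expand the double gamma factors of \eqref{M2} and \eqref{M1} afresh: it obtains \eqref{asymptotic2} simply by setting $\lambda_1=\lambda_2=0$ in the already-established expansion \eqref{asymptoticgeneral} of Theorem \ref{BSM}, and it obtains \eqref{asymptotic1} by introducing the auxiliary expansion \eqref{asymptoticz} of $\mathfrak{M}(q\,|\,\mu,\mu z,0)$ (needed because $\lambda_1=\mu q$ itself depends on $\mu$) and then putting $z=q$; that auxiliary expansion is justified either by the intermittency-expansion machinery of Section 3 or by the derivation in \cite{MeIMRN} of \eqref{asymptoticgeneral} from \eqref{Mfunc}. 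Your route is exactly this last alternative, carried out directly on \eqref{M2} and \eqref{M1}: expand $\log\Gamma_2(c_0+c_1\tau\,|\,\tau)$ in $1/\tau=\mu/2$ and collect Bernoulli polynomials, with the $c_1=1$ factors producing the $\zeta(r+1)$ bracket and the $c_1=2$ factors producing the $\zeta(r+1)-1$ bracket via $\zeta(s,2)=\zeta(s)-1$ --- which is indeed the correct mechanism, and your list of arguments $c_0$ matches the factors appearing in \eqref{M2} and \eqref{M1}. What the paper's route buys is brevity: everything is quoted from Theorem \ref{BSM} and \cite{MeIMRN}, including the nontrivial cancellations of the $\tau$, $\log\tau$, Euler-constant and order-one terms that you would otherwise have to verify by hand. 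What your route buys is that the $\mu$-dependence of $\lambda_1=\mu q$ is handled automatically, since in \eqref{M1} it appears only through the $\mu$-independent combination $\tau\lambda_1=2q$, so no auxiliary parameter $z$ is needed; it also makes the origin of the two arithmetic brackets transparent. Your reliance on the known large-$\tau$ asymptotics of $\log\Gamma_2$ (termwise differentiation at $s=0$, rearrangement of the asymptotic series) is legitimate provided you quote it from \cite{Ruij} or \cite{MeIMRN}, as you indicate, so I see no genuine gap --- only the bookkeeping you yourself flag.
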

\begin{corollary}[Covariance structure]\label{covstructure}
Let $0<s_1<s_2<1.$ Then, in the limit $\Delta\rightarrow 0,$
\begin{align}
\lim\limits_{t\rightarrow\infty} \Bigl[\mu(\log\varepsilon(t)-\kappa)+{\bf Cov}\Bigl(\log \int\limits_{s_1}^{s_1+\Delta} e^{S^{(1)}_t(\mu,u)} du, \, \log \int\limits_{s_2}^{s_2+\Delta} e^{S^{(1)}_t(\mu,u)} du\Bigr)\Bigr]
= &-\mu\bigr(\log |s_1-s_2|-\log|s_1|-\nonumber \\ &-\log|s_2|\bigl) +O(\Delta),\\
\lim\limits_{t\rightarrow\infty} \Bigl[\mu(3\log\varepsilon(t)-\kappa)+{\bf Cov}\Bigl(\log \int\limits_{s_1}^{s_1+\Delta} e^{S^{(2)}_t(\mu,u)} du, \, \log \int\limits_{s_2}^{s_2+\Delta} e^{S^{(2)}_t(\mu,u)} du\Bigr)\Bigr]
= & -\mu\log |s_1-s_2| +O(\Delta).
\end{align}
\end{corollary}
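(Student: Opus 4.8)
The plan is to reduce the covariance of the logarithms of the small integrals $\int_{s_1}^{s_1+\Delta} e^{S^{(1)}_t(\mu,u)}\,du$ and $\int_{s_2}^{s_2+\Delta} e^{S^{(2)}_t(\mu,u)}\,du$ to the covariance of the limiting gaussian fields $u\mapsto S^{(i)}(\mu,u)$ themselves, which has already been computed in the excerpt in \eqref{limcov1} and \eqref{limcov2}. First I would invoke Theorem \ref{strong} together with the identities \eqref{ourstats1}--\eqref{ourstats2}: since $S^{(i)}_t(\mu,u)$ converges in law (jointly in $u$) to a centered gaussian field with the covariance kernel displayed in \eqref{limcov1}--\eqref{limcov2}, and since $\varepsilon(t)$ satisfies \eqref{vart} or \eqref{vart2}, the $t\to\infty$ limit of the covariance of the bracketed quantities exists and equals the corresponding functional of the limiting kernel (this is exactly what is asserted in the Remark following the statements). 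Thus the $\lim_{t\to\infty}$ in the Corollary commutes with the relevant expectations, and the problem becomes: compute $\operatorname{\bf Cov}\bigl(\log\int_{s_1}^{s_1+\Delta} e^{G(u)}\,du,\ \log\int_{s_2}^{s_2+\Delta} e^{G(v)}\,dv\bigr)$ as $\Delta\to0$, where $G$ is the limiting gaussian field.

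The key step is then a small-$\Delta$ expansion. As $\Delta\to0$, $\int_{s_j}^{s_j+\Delta} e^{G(u)}\,du = \Delta\, e^{G(s_j)}\bigl(1+O(\Delta)\bigr)$ in an appropriate sense (the integrand is continuous in $u$ and the field is a.s. locally bounded after the smoothing by $\phi_\varepsilon$), so $\log\int_{s_j}^{s_j+\Delta} e^{G(u)}\,du = \log\Delta + G(s_j) + O(\Delta)$. Since $\log\Delta$ is deterministic it drops out of the covariance, and we are left with $\operatorname{\bf Cov}\bigl(G(s_1),G(s_2)\bigr) + O(\Delta)$. Now $\operatorname{\bf Cov}(G(s_1),G(s_2))$ is read off directly from \eqref{limcov1} (for $i=1$) and \eqref{limcov2} (for $i=2$) with $u=s_1$, $v=s_2$, $|u-v|=|s_1-s_2|\gg\varepsilon(t)$ (which holds eventually since $s_1\neq s_2$ are fixed while $\varepsilon(t)\to0$): for $i=1$ this gives $-\mu(\log\varepsilon(t)-\kappa) - \mu(\log|s_1-s_2| - \log|s_1| - \log|s_2|)$, and for $i=2$ it gives $-\mu(3\log\varepsilon(t)-\kappa) - \mu\log|s_1-s_2|$. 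Transposing the divergent pieces $\mu(\log\varepsilon(t)-\kappa)$ and $\mu(3\log\varepsilon(t)-\kappa)$ to the left-hand side yields precisely the two displayed identities.

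The main obstacle is making the informal expansion $\log\int_{s_j}^{s_j+\Delta}e^{G(u)}\,du = \log\Delta + G(s_j) + O(\Delta)$ rigorous at the level of \emph{covariances} rather than almost surely: one needs the remainder to have finite second moment uniformly and to contribute only $O(\Delta)$ to the covariance. The clean way is to avoid the a.s. statement entirely and instead write $\int_{s_j}^{s_j+\Delta}e^{G(u)}\,du = \Delta\int_0^1 e^{G(s_j+\Delta w)}\,dw$, expand $G(s_j+\Delta w) = G(s_j) + (G(s_j+\Delta w)-G(s_j))$, note that by the explicit logarithmic form of the covariance the increment $G(s_j+\Delta w)-G(s_j)$ has variance $O(\Delta\log(1/\Delta))$ hence $L^2$-norm $o(1)$, and then Taylor-expand the logarithm controlling the error in $L^2$ using the Gaussian hypercontractivity / exponential-moment bounds available for $G$ (the field is gaussian with a genuinely bounded covariance for fixed $\varepsilon(t)$, so all exponential moments are finite). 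This is the only place where care is genuinely needed; everything else is bookkeeping with \eqref{limcov1}--\eqref{limcov2}. Note that in the $i=2$ case one also uses \eqref{vart2} to guarantee $\varepsilon(t)\lambda(t)\gg1$ so that the $O(1/\lambda(t))$ error terms in \eqref{ourstats2} are negligible, as already observed in the footnote to \eqref{Sut}.
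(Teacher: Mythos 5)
Your final formulas are correct, and your opening reduction (passing to the limiting gaussian field at scale $\varepsilon(t)$, as in \eqref{S1approxlaw} and \eqref{S2approxlaw}) is the same heuristic step the paper makes. The gap is in your small-$\Delta$ analysis. The limits in the Corollary are taken in the order $t\rightarrow\infty$ first and $\Delta\rightarrow0$ second, so at the point where you linearize you have $\varepsilon(t)\ll\Delta$, and over an interval of fixed length $\Delta$ the limiting field $G$ is not approximately constant: from \eqref{limcov1}--\eqref{limcov2} one gets ${\bf Var}\bigl(G(s_j+\Delta w)-G(s_j)\bigr)=2\mu\bigl(\log(\Delta w/\varepsilon(t))+\kappa\bigr)+O(\varepsilon(t))$, which diverges as $t\rightarrow\infty$. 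Your claimed increment bound $O(\Delta\log(1/\Delta))$ would hold only in the opposite regime $\Delta\ll\varepsilon(t)$, i.e.\ if the two limits were interchanged. Consequently the expansion $\log\int_{s_j}^{s_j+\Delta}e^{G(u)}du=\log\Delta+G(s_j)+O(\Delta)$ fails in $L^2$: after normalization, $\int_{s_j}^{s_j+\Delta}e^{G(u)}du$ converges to the mass of the multiplicative chaos measure of that interval, a genuinely non-degenerate (and non-lognormal) random variable, not $\Delta e^{G(s_j)}$; if your expansion were valid, the limiting mass would be lognormal, contradicting the very construction the paper is built on.

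The statement you want is nevertheless true, because the sub-$\Delta$ fluctuations inside the two disjoint intervals are asymptotically uncorrelated across the intervals and therefore do not affect the covariance at leading order --- but that is precisely the content of \eqref{covstrucmass}, which must be invoked (or re-derived), not a by-product of a pointwise Taylor expansion. The paper's argument goes through the measure rather than through a linearization: for $i=2$, use \eqref{S2approxlaw} to write the statistic as $\omega_{\mu,\varepsilon(t)}(u)$ plus an independent gaussian whose variance $-\mu(3\log\varepsilon(t)-\kappa)$ supplies the normalizing term on the left-hand side, and then apply \eqref{covstrucmass} (itself a consequence of the joint moment formula \eqref{jointmomentlimitmeasure}) together with stationarity of the limit lognormal measure to obtain $-\mu\log|s_1-s_2|+O(\Delta)$; for $i=1$ one needs the analogue of \eqref{covstrucmass} for the centered field, which follows from the joint moment formula \eqref{jointmomentsformula} in the same way. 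If you wish to keep the structure of your write-up, replace the linearization step by this covariance computation for the logarithms of the masses of two disjoint small intervals.
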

The next two results deal with the probabilistic structure
that is underlying the Mellin-type transform in \eqref{M2}.
\begin{corollary}[Non-central limit]\label{noncentral}
The limit in \eqref{M2} is the Mellin transform of a positive probability distribution.  Call it $M_\mu.$ Then, for $\Re(q)<\tau,$
\begin{equation}\label{Mlaw}
\lim\limits_{t\rightarrow\infty}  e^{\mu(\log \varepsilon(t)-\kappa)\frac{q(q+1)}{2}} 
{\bf E} \Bigl[\Bigl(\int_0^1 u^{-\mu q} e^{S^{(1)}_t(\mu,u)} du\Bigr)^q\Bigr] = {\bf E}[M_\mu^q].
\end{equation}
$\log M_\mu$ is infinitely divisible on the real line having the L\'evy-Khinchine decomposition
$\log{\bf E}[M_\mu^q]  =
q\,\mathfrak{m}(\mu) + \frac{1}{2} q^2
\sigma^2(\mu)+\int_{\mathbb{R}\setminus \{0\}} \bigl(e^{q u}-1-q
u/(1+u^2)\bigr) d\mathcal{M}_{\mu}(u)$ for
some $\mathfrak{m}(\mu)\in\mathbb{R}$ and the following gaussian
component and spectral function
\begin{align}
\sigma^2(\mu) & = \frac{4\log 2}{\tau}, \\
\mathcal{M}_\mu(u) & = \int\limits_u^\infty 
\Bigl[\frac{\bigl(e^x+2+e^{-x(1+\tau)}\bigr)}
{\bigl(e^x-1\bigr)(e^{x\tau}-1)}
-\frac{e^{-x(1+\tau)/2}}{\bigl(e^{x/2}-1\bigr)(e^{x\tau/2}-1)}\Bigr]
\frac{dx}{x}
\end{align}
for $u>0,$ and $\mathcal{M}_\mu(u)=0$ for $u<0.$ $M_\mu$ is a product of a lognormal, Fr\'echet and independent Barnes beta random variables.\footnote{This decomposition is shown in detail in Section 3, cf. Theorem \ref{B2}.}
\end{corollary}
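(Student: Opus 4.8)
The plan is to reduce the statement to an analytic study of the explicit right-hand side of \eqref{M2}, call it $F(q)$, and then to recognize $F$ as a product of Mellin transforms of a lognormal, a Fr\'echet, and Barnes beta laws, from which both positivity and the L\'evy--Khinchine data follow. By the main conjecture (Conjecture \ref{Mellin}, eq. \eqref{M2v}) the rescaled double limit on the left of \eqref{Mlaw} equals $F(q)$, so it suffices to show that $F$ is the Mellin transform of a positive random variable $M_\mu$ and to compute the triple of $\log M_\mu$. Here I would invoke the prior analysis of the Selberg integral distribution in \cite{Me4} and of its Mellin transform in \cite{MeIMRN}, \cite{Me14}: $F(q)$ is precisely the function whose value at $q=n<\tau$ recovers the Selberg integral $\int_{[0,1]^n}\prod_{i<j}|s_i-s_j|^{-\mu}\,ds$, matching Corollary \ref{posmoments}.

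The core step is to factor $F(q)$. Using the two functional equations for $\Gamma_2(\cdot\,|\,\tau)$, one reduces the $\Gamma_2$-ratios in \eqref{M2} with shifts by $1$, $\tau$, $1+\tau$ to ordinary $\Gamma$-factors wherever the shifts allow, leaving ratios of the form $\Gamma_2(a-q\,|\,\tau)/\Gamma_2(a\,|\,\tau)$ (and products thereof) that are, by definition, the Mellin transforms $\mathbf{E}[\beta^q]$ of Barnes beta random variables $\beta_{2,k}(\tau)$, cf. Theorem \ref{B2}. The surviving elementary prefactor $\tau^{q/\tau}(2\pi)^q\Gamma^{-q}(1-1/\tau)$ together with the quadratic-in-$q$ contribution that emerges from the reduction is the Mellin transform $\exp\!\bigl(q\,\mathfrak{m}(\mu)+\tfrac12 q^2\sigma^2(\mu)\bigr)$ of a lognormal variable $L$, with $\sigma^2(\mu)=4\log 2/\tau$, while a residual $\Gamma$-factor of the type $\Gamma(1-q/\cdot)$ is the Mellin transform of a Fr\'echet variable $Y$. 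Hence $F(q)=\mathbf{E}[L^q]\,\mathbf{E}[Y^q]\prod_k\mathbf{E}[\beta_{2,k}^q]$ with all factors independent and positive and the product absolutely convergent for $\Re(q)<\tau$; therefore $F$ is the Mellin transform of $M_\mu=L\cdot Y\cdot\prod_k\beta_{2,k}(\tau)$, which is the asserted product representation, and \eqref{Mlaw} follows.

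For the remaining claims I would argue as follows. Each factor has logarithm infinitely divisible on $\mathbb{R}$: $\log L$ is Gaussian; $\log Y$ for a Fr\'echet law is ID (a negative Gumbel-type variable, with the classical ID representation); and $\log\beta_{2,k}(\tau)$ is ID by the results of \cite{MeIMRN}, \cite{Me14}. Since a product of independent variables with ID logarithms has ID logarithm whose L\'evy--Khinchine triple is the sum of the individual triples, $\log M_\mu$ is ID, the Gaussian component comes only from $\log L$ and equals $\sigma^2(\mu)=4\log 2/\tau$, the drift is absorbed into some $\mathfrak{m}(\mu)\in\mathbb{R}$, and the spectral function is the sum of the L\'evy measures of $\log Y$ and of the $\log\beta_{2,k}(\tau)$. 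To obtain the closed form of $\mathcal{M}_\mu$, I would substitute the Malmsten--Binet-type integral representation of $\log\Gamma_2(w\,|\,\tau)$ into $\log F(q)$, collect terms, and thereby write $\log F(q)=q\,\mathfrak{m}(\mu)+\tfrac12 q^2\sigma^2(\mu)+\int_0^\infty\bigl(e^{qu}-1-qu/(1+u^2)\bigr)\,d\mathcal{M}_\mu(u)$, reading off the density of $\mathcal{M}_\mu$ from the integrand; this yields exactly the bracketed expression in $\tfrac{dx}{x}$ displayed in the statement, supported on $u>0$.

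\textbf{Main obstacle.} I expect two delicate points. The first is purely bookkeeping but unavoidable: matching each $\Gamma_2$-ratio in \eqref{M2} to a Barnes beta with the correct parameters $k$, and checking that the leftover elementary factors collapse into \emph{precisely} one lognormal with variance $4\log 2/\tau$ and one Fr\'echet, with no spurious terms — the shifts by $2+2\tau$, $2-2q+2\tau$, etc., have to be handled with care. The second, and genuinely nontrivial, point is establishing that the resulting spectral measure is nonnegative, i.e. that $\frac{e^{x}+2+e^{-x(1+\tau)}}{(e^{x}-1)(e^{x\tau}-1)}-\frac{e^{-x(1+\tau)/2}}{(e^{x/2}-1)(e^{x\tau/2}-1)}\ge 0$ for all $x>0$ and $\tau>1$; this inequality is what certifies that $M_\mu$ is actually log-infinitely-divisible rather than merely possessing a formal expansion of L\'evy--Khinchine shape, and it would likely require a substitution $e^{-x}\mapsto r\in(0,1)$ followed by a factorization argument or a termwise comparison of $q$-series. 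Throughout, the restriction $\Re(q)<\tau$ must be tracked to guarantee convergence of the Mellin integrals and of the infinite product over Barnes betas.
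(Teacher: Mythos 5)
Your proposal is correct and follows essentially the same route as the paper: given Conjecture \ref{Mellin}, the rescaled limit equals $\mathfrak{M}(q\,|\,\mu,0,0)$, and the paper's proof simply cites Theorems \ref{BSM} and \ref{B2} (established in \cite{MeIMRN}, \cite{Me13}, \cite{Me14}) for positivity, the lognormal--Fr\'echet--Barnes-beta factorization, and the L\'evy--Khinchine data, which is precisely what you sketch re-deriving from \eqref{M2}. Note only that the obstacle you flag as genuinely nontrivial, the nonnegativity of the bracketed spectral density, is automatic once the product decomposition is in hand, since $\mathcal{M}_\mu$ is then the sum of the L\'evy measures of $\log Y$ and of the (inverse) Barnes beta factors, each supported on $(0,\infty)$ and nonnegative, so no separate inequality needs to be proved.
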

\begin{corollary}[Multifractality]\label{multifrac}
Let $0<s<1.$ Then, for $\Re(q)<\tau,$ the limit
\begin{equation}
\lim\limits_{t\rightarrow\infty} e^{\mu(\log \varepsilon(t)-\kappa)\frac{q(q+1)}{2}}  
{\bf E} \Bigl[\Bigl(\int_0^s u^{-\mu q} e^{S^{(1)}_t(\mu,u)} du\Bigr)^q\Bigr] = {\bf E}\bigl[M_\mu^q(s)\bigr]
\end{equation}
is the Mellin transform of a probability distribution, call it $M_\mu(s),$
which satisfies the multifractal law\footnote{Also referred to in the literature as stochastic self-similarity or scale-consistent continuous dilation invariance. It
determines the law of $M_\mu(s),$ $s<1,$ from the law of $M_\mu$ in \eqref{Mlaw}
.}
\begin{equation}\label{multifractal}
M_{\mu}(s) \overset{{\rm in \,law}}{=} s\exp\bigl(\Omega_s\bigr) M_{\mu},
\end{equation}
where $\Omega_s$ is a gaussian random variable with the mean $(\mu/2)\log s$ and variance $-\mu\log s$ that is independent of $M_{\mu}$ and $M_\mu$ is as in Corollary \ref{noncentral}. This law is understood as the equality of random variables in law at fixed $s<1.$ In particular, $M_\mu(s)$
is also log-infinitely divisible.
\end{corollary}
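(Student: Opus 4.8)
The plan is to reduce the integral over $[0,s]$ to the integral over $[0,1]$ already treated in Corollary \ref{noncentral}, by means of the spatial rescaling $u=sv$, exploiting the scale invariance of the bilinear form $\langle\cdot,\cdot\rangle$ on charge-neutral configurations, and then to read off \eqref{multifractal} from the uniqueness of Mellin transforms. By Conjecture \ref{Mellin} (the single-limit $\leftrightarrow$ double-limit reduction, which applies verbatim to integration over any subinterval of $(0,1)$), the left-hand side of the corollary equals the iterated limit
\[
{\bf E}[M_\mu^q(s)] \;=\; \lim_{\varepsilon\rightarrow 0} e^{\mu(\log\varepsilon-\kappa)\frac{q(q+1)}{2}}\,{\bf E}\Bigl[\Bigl(\int_0^s u^{-\mu q}e^{S^{(1)}(\mu,u,\varepsilon)}du\Bigr)^q\Bigr],
\]
where $S^{(1)}(\mu,\cdot,\varepsilon)$ is the centered gaussian field obtained from Theorem \ref{strong}, with covariance $2\mu\pi^2\langle\chi^{(1)}_u\star\phi_\varepsilon,\chi^{(1)}_v\star\phi_\varepsilon\rangle$ as in \eqref{limcov1}. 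The change of variables $u=sv$ gives $\int_0^s u^{-\mu q}e^{S^{(1)}(\mu,u,\varepsilon)}du = s^{1-\mu q}\int_0^1 v^{-\mu q}e^{S^{(1)}(\mu,sv,\varepsilon)}dv$, and the $i=2$ statistic is handled by the translation \eqref{transl1}--\eqref{transl2}.

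The key structural fact is that the processes $\{S^{(1)}(\mu,sv,\varepsilon)\}_{v\in(0,1)}$ and $\{S^{(1)}(\mu,v,\varepsilon/s)\}_{v\in(0,1)}$ are equal in law. Both are centered gaussian, so it suffices to match covariances. Since $\phi_{\varepsilon/s}(x)=s\,\phi_\varepsilon(sx)$ and the derivative in \eqref{fuprime1} has vanishing integral, the substitution $x\mapsto sx,\ y\mapsto sy$ in \eqref{scalar} yields $\langle\chi^{(1)}_{su}\star\phi_\varepsilon,\chi^{(1)}_{sv}\star\phi_\varepsilon\rangle=\langle\chi^{(1)}_{u}\star\phi_{\varepsilon/s},\chi^{(1)}_{v}\star\phi_{\varepsilon/s}\rangle$: the additive $\log s$ produced by the rescaling inside $\log|x-y|$ is killed by charge neutrality. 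Equivalently, this is visible directly in \eqref{limcov1}, since replacing $(u,v)$ by $(su,sv)$ there turns the cutoff $\varepsilon$ into $\varepsilon/s$ uniformly, in both the generic and the diagonal regime; the consistency of the diagonal (variance) term forces exactly this cutoff change, with no extra independent gaussian summand.

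Combining the two identities, applying $\lim_{\varepsilon\rightarrow 0}e^{\mu(\log\varepsilon-\kappa)\frac{q(q+1)}{2}}$, and substituting $\delta=\varepsilon/s$ through $e^{\mu(\log\varepsilon-\kappa)\frac{q(q+1)}{2}}=s^{\mu q(q+1)/2}e^{\mu(\log\delta-\kappa)\frac{q(q+1)}{2}}$, the inner $\delta$-limit equals ${\bf E}[M_\mu^q]$ by \eqref{Mlaw}, whence
\[
{\bf E}[M_\mu^q(s)] \;=\; s^{\,q(1-\mu q)+\mu q(q+1)/2}\,{\bf E}[M_\mu^q] \;=\; s^{\,q+\mu q/2-\mu q^2/2}\,{\bf E}[M_\mu^q].
\]
Writing $s^{\,q+\mu q/2-\mu q^2/2}=s^q\exp\bigl(q\tfrac{\mu}{2}\log s+\tfrac{1}{2}q^2(-\mu\log s)\bigr)=s^q\,{\bf E}[e^{q\Omega_s}]$ with $\Omega_s\sim\mathcal{N}\bigl(\tfrac{\mu}{2}\log s,\,-\mu\log s\bigr)$ (the variance being positive since $0<s<1$), and taking $\Omega_s$ independent of $M_\mu$, one obtains ${\bf E}[M_\mu^q(s)]={\bf E}[(s\,e^{\Omega_s}M_\mu)^q]$ for $\Re(q)<\tau$; in particular this is the Mellin transform of a bona fide probability law, $s\,e^{\Omega_s}M_\mu$ being a product of independent positive random variables. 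Since both Mellin transforms are holomorphic and agree on the strip $\Re(q)<\tau$, on which the laws in question are determined by their Mellin transforms (cf. \cite{MeIMRN}, \cite{Me14}), we conclude $M_\mu(s)\overset{{\rm in \,law}}{=}s\,e^{\Omega_s}M_\mu$, i.e. \eqref{multifractal}. Finally $\log M_\mu(s)=\log s+\Omega_s+\log M_\mu$ is a constant plus a sum of two independent infinitely divisible random variables — a gaussian and $\log M_\mu$ (the latter by Corollary \ref{noncentral}) — hence infinitely divisible.

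The genuine obstacle lies not in the gaussian bookkeeping above but already in the first display: upgrading the convergence in law of finite linear combinations of the statistics in Theorem \ref{strong} to convergence of the nonlinear functional ${\bf E}\bigl[\bigl(\int_0^s u^{-\mu q}e^{S^{(1)}_t(\mu,u,\varepsilon)}du\bigr)^q\bigr]$ for complex $q$ with $\Re(q)<\tau$ (including negative real part), together with control of the ensuing $\varepsilon\rightarrow 0$ limit and of the behaviour of $u^{-\mu q}e^{S^{(1)}}$ near $u=0$. This is exactly the analytic content subsumed in the main conjecture and carried out rigorously via the Selberg-integral moment structure in \cite{MeIMRN} and \cite{Me14}; everything else is the scale invariance of $\langle\cdot,\cdot\rangle$ on charge-neutral configurations and the uniqueness of Mellin transforms.
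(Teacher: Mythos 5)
Your argument is correct (at the same conditional, "physicist's" level of rigor as the paper's, since both hinge on the formal replacement of $S^{(1)}_t(\mu,u)$ by its gaussian limit as in Conjecture \ref{Mellin}), but it reaches \eqref{multifractal} by a genuinely different route. The paper's proof is two lines: it notes that the left-hand side is the special case $\varphi(u)=1,$ $I=[0,s]$ of \eqref{gen2} in Theorem \ref{general}, so the limit equals ${\bf E}\bigl[\bigl(\int_0^s M_\mu(du)\bigr)^q\bigr],$ and then quotes the known multifractal law \eqref{multifractallaw} of the limit lognormal measure (itself a consequence of the truncation-scale invariance \eqref{invariance}, due to \cite{MRW}). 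You never pass through the measure: you rescale $u=sv,$ observe that charge neutrality ($\int f'_\varepsilon=0$) makes the bilinear form \eqref{scalar} exactly invariant under simultaneous dilation of the interval and the cutoff, so that $\{S^{(1)}(\mu,sv,\varepsilon)\}\overset{{\rm law}}{=}\{S^{(1)}(\mu,v,\varepsilon/s)\},$ and then track the rescaling factor to get ${\bf E}[M_\mu^q(s)]=s^{q+\mu q/2-\mu q^2/2}\,{\bf E}[M_\mu^q],$ which you recognize as the Mellin transform of $s\,e^{\Omega_s}M_\mu$ and upgrade to equality in law via uniqueness of the Mellin transform on a strip containing the imaginary axis (correctly sidestepping the moment indeterminacy noted in Theorem \ref{B2}; the exponent bookkeeping checks out and reproduces \eqref{multi}). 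What your route buys: it is self-contained modulo Corollary \ref{noncentral} and Conjecture \ref{Mellin}, it establishes simultaneously that the limit is a bona fide Mellin transform and what the law is, it rederives rather than imports the self-similarity mechanism (in effect re-proving the centered analogue of \eqref{invariance} at the covariance level), and it yields \eqref{multilaw2} of Corollary \ref{Multi} as an immediate by-product. What the paper's route buys: brevity, and, by working through $\int_0^s M_\mu(du),$ access to the process-level structure of the measure rather than only the one-dimensional law at fixed $s$ — though the corollary as stated claims no more than the latter, so nothing is lost in your version.
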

\begin{corollary}[Multiscaling]\label{Multi}
Let $0<s<1.$  Then, for $0<\Re(q)<\tau,$
\begin{align}
\lim\limits_{t\rightarrow\infty} e^{\mu(\log \varepsilon(t)-\kappa)\frac{q(q+1)}{2}}  
{\bf E} \Bigl[\Bigl(\int_0^s u^{-\mu q} e^{S^{(1)}_t(\mu,u)} du\Bigr)^q\Bigr]  = const(\mu, q) \,\,s^{q-\frac{\mu}{2}(q^2-q)}, \label{multilaw2}\\
\lim\limits_{t\rightarrow\infty} e^{\mu(\log \varepsilon(t)-\kappa)\frac{q(q+1)}{2}}  
{\bf E} \Bigl[\Bigl(\int_0^s e^{S^{(1)}_t(\mu,u)} du\Bigr)^q\Bigr]  = const(\mu, q)\, \,s^{q+\frac{\mu}{2}(q^2+q)},\label{multilaw1}
\end{align}
\emph{i.e.} these Mellin-type transforms are multiscaling 
as functions of $s.$
\end{corollary}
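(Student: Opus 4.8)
I will obtain \eqref{multilaw2} from the multifractal law of Corollary~\ref{multifrac} and \eqref{multilaw1} from a direct rescaling of the BKR statistics, the latter argument in fact also covering the former and exhibiting the origin of the $s$-exponents. For \eqref{multilaw2}: by Corollary~\ref{multifrac} the left-hand side equals ${\bf E}\bigl[M_\mu^q(s)\bigr]$, and the law \eqref{multifractal} together with the independence of $\Omega_s$ and $M_\mu$ gives ${\bf E}\bigl[M_\mu^q(s)\bigr]=s^q\,{\bf E}\bigl[e^{q\Omega_s}\bigr]\,{\bf E}\bigl[M_\mu^q\bigr]$. Since $\Omega_s\sim\mathcal{N}\bigl(\tfrac{\mu}{2}\log s,\,-\mu\log s\bigr)$, its moment generating function is ${\bf E}\bigl[e^{q\Omega_s}\bigr]=\exp\bigl(\tfrac{\mu q}{2}\log s-\tfrac{\mu q^2}{2}\log s\bigr)=s^{\frac{\mu}{2}(q-q^2)}$, whence ${\bf E}\bigl[M_\mu^q(s)\bigr]={\bf E}\bigl[M_\mu^q\bigr]\,s^{\,q-\frac{\mu}{2}(q^2-q)}$. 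By Corollary~\ref{noncentral} the prefactor ${\bf E}\bigl[M_\mu^q\bigr]$ is finite for $\Re(q)<\tau$ and equals the right-hand side of \eqref{M2}; this identifies $const(\mu,q)$ and gives \eqref{multilaw2} (the hypothesis $\Re(q)>0$ is harmless, since Corollary~\ref{multifrac} already covers the whole half-plane $\Re(q)<\tau$).

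For \eqref{multilaw1} I would argue directly, and the same computation re-proves \eqref{multilaw2}. The substitution $u=sv$ gives the exact finite-$t$ identity $\int_0^s e^{S^{(1)}_t(\mu,u)}\,du=s\int_0^1 e^{S^{(1)}_t(\mu,sv)}\,dv$, and the key point is that the rescaled process $v\mapsto S^{(1)}_t(\mu,sv)$ is again a BKR statistic of the same form. Indeed, from $\chi^{(1)}_{sv}(x)=\chi^{(1)}_v(x/s)$ and the scaling identity $s\,\phi_{\varepsilon(t)}(sz)=\phi_{\varepsilon(t)/s}(z)$ one obtains $(\chi^{(1)}_{sv}\star\phi_{\varepsilon(t)})(x)=(\chi^{(1)}_v\star\phi_{\varepsilon(t)/s})(x/s)$, and since $\gamma(t)/s=(\lambda(t)/s)(\gamma-\omega t)$, it follows that $v\mapsto S^{(1)}_t(\mu,sv)$ is exactly the statistic $S^{(1)}_t(\mu,\cdot)$ built from $(\lambda(t)/s,\,\varepsilon(t)/s)$, which still satisfies \eqref{lt} and \eqref{vart} for each fixed $s\in(0,1)$. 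Hence, by the $\varepsilon$-independence of the normalized limit asserted in Conjectures~\ref{Mellinvar}--\ref{Mellin}, $\;\lim_{t\to\infty}e^{\mu(\log(\varepsilon(t)/s)-\kappa)\frac{q(q+1)}{2}}{\bf E}\bigl[\bigl(\int_0^1 e^{S^{(1)}_t(\mu,sv)}\,dv\bigr)^q\bigr]$ is the $s=1$ value, namely the right-hand side of \eqref{M1}. Writing $\log\varepsilon(t)=\log(\varepsilon(t)/s)+\log s$ splits off the normalization factor $e^{\frac{\mu}{2}q(q+1)\log s}=s^{\frac{\mu}{2}(q^2+q)}$, which together with the Jacobian power $s^q$ produces $const(\mu,q)\,s^{\,q+\frac{\mu}{2}(q^2+q)}$; repeating verbatim with the weight $u^{-\mu q}=s^{-\mu q}v^{-\mu q}$ reproduces \eqref{multilaw2}. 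As a check, at $q=n\in\mathbb{N}$ the same rescaling $s_i=s\,t_i$ inside the Selberg integrals of Corollary~\ref{posmoments} yields the matching powers of $s$.

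The genuine difficulty is not in these steps, which are exact changes of variables and an elementary gaussian computation, but in the inputs I am permitted to assume: Corollary~\ref{multifrac} and the $\varepsilon$-independence of the normalized limits encode the conjectural interchange of $\lim_{t\to\infty}$ with the $u$-integration near the endpoint $u=0$, where at finite $t$ the integrand tends to $1$ while $u^{-\mu q}$ is not integrable once $\Re(q)\ge\tau/2$, so that only the passage $t\to\infty$ makes these transforms finite on the full strip $0<\Re(q)<\tau$. A routine secondary point is to verify that replacing $(\lambda(t),\varepsilon(t))$ by $(\lambda(t)/s,\varepsilon(t)/s)$ changes the limiting covariance \eqref{limcov1} only by the explicit additive shift $\mu\log s$; this is immediate from its logarithmic form together with Lemmas~\ref{modified} and \ref{limit}. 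The corresponding statements for the second statistic follow from \eqref{transl1}--\eqref{transl2}.
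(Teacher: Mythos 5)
Your argument is correct (at the same conjectural level as the paper's own proof), but for \eqref{multilaw1} you take a genuinely different route. For \eqref{multilaw2} you do essentially what the paper does: the paper's proof simply invokes the multiscaling law \eqref{multi}, which is exactly your computation of ${\bf E}[e^{q\Omega_s}]$ from the multifractal law via Corollary \ref{multifrac}. For \eqref{multilaw1}, however, the paper stays on the limit-measure side: it generalizes \eqref{multifractallaw} to the distributional identity \eqref{multifracq} for the weighted mass $\int_0^s u^{\mu q}M_\mu(du)$, deduced from the truncation-scale invariance \eqref{invariance} of the free field, and then takes $q$th moments; this yields a statement stronger than the moment scaling (an equality in law) and only uses the same heuristic identification of the limit with the weighted mass on a subinterval already invoked for Corollary \ref{multifrac}. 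You instead work on the zeta side: the exact identities $(\chi^{(1)}_{sv}\star\phi_{\varepsilon})(x)=(\chi^{(1)}_{v}\star\phi_{\varepsilon/s})(x/s)$ and $\gamma(t)/s=(\lambda(t)/s)(\gamma-\omega t)$ show that $v\mapsto S^{(1)}_t(\mu,sv)$ is literally the statistic \eqref{Sut} built from $(\lambda(t)/s,\varepsilon(t)/s)$, which remain admissible under \eqref{lt} and \eqref{vart} for fixed $s$, so the conjectured $(\lambda,\varepsilon)$-independence of the normalized limit in Conjecture \ref{Mellin} gives the $s$-independent constant, with the exponent emerging transparently from the Jacobian $s^q$ and the split $\log\varepsilon(t)=\log(\varepsilon(t)/s)+\log s$ in the normalization. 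What this buys is that your derivation of both displays needs only the full-interval conjecture (not the subinterval extension implicit in Corollary \ref{multifrac}), and it makes the powers of $s$ pure bookkeeping; what it costs is that it leans on the universality of the limit over admissible $(\lambda(t),\varepsilon(t))$, which is part of the conjecture's content rather than something verified, whereas the paper's route isolates the scaling in a checkable property of the limit lognormal measure. Your closing caveat about finiteness of the weighted transform at finite $t$ for $\Re(q)\ge\tau/2$ is a fair observation, but it concerns the paper's formulation of the conjectures themselves, not a defect of your argument.
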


The rationale for considering two separate transforms in Conjectures \ref{Mellinvar} and \ref{Mellin} is that they are complementary. The transforms in \eqref{M2transf} and \eqref{M2v} are not, strictly speaking, rescaled Mellin transforms because of 
the $u^{-\mu q}$ multiplier. This multiplier is introduced to obtain the Mellin transform of a probability distribution on the right-hand side of these equations. If we drop this multiplier, we loose this probabilistic interpretation but gain a \emph{bona fide} rescaled Mellin transforms of $\int_0^1 e^{S^{(1)}_t(\mu,u,\varepsilon)} du$ and $\int_0^1 e^{S^{(1)}_t(\mu,u)} du$ on the left-hand side 
so that the limit fits into the general theory of mod-gaussian convergence, cf. \cite{Jacod}. The interest in introducing
the second statistic $S^{(2)}_t(\mu,u,\varepsilon)$ and $S^{(2)}_t(\mu,u)$ is precisely to eliminate the need to introduce the $u^{-\mu q}$ multiplier so
that the transforms in \eqref{M2transfstrong} and \eqref{M2vstrong} are both \emph{bona fide} rescaled Mellin transforms
and give the Mellin transform of a probability distribution on the right-hand side of these equations. 
We note that the right-hand sides of both \eqref{M2} and \eqref{M1} are special cases of the Mellin transform of
the Selberg integral distribution that is given in \eqref{Mfunc} and \eqref{ShintaniFactor}. The meaning of the conditions $\Re(q)<\tau$ and
$-(\tau+1)/2<\Re(q)<\tau$ 
is that
the right-hand sides of \eqref{M2} and \eqref{M1} are analytic and zero-free over these regions. As will be explained in Section 4, one can insert the $u^{\lambda_1}(1-u)^{\lambda_2}$ prefactor into the exponential functionals on the left-hand sides of \eqref{M2transf} -- \eqref{M1vstrong} so as to obtain the general Mellin transform of the Selberg integral distribution on the right-hand side, cf. Theorem \ref{general} for details. We chose not to state the most general case here to avoid unnecessary complexity.

\begin{remark}
A random matrix analogue of the Bourgade-Kuan-Rodgers theorems was established in \cite{FKS}. The corresponding linear statistic
is an appropriately centered log-absolute value of the characteristic polynomial of a suitably scaled GUE matrix. The eigenvalues
of the matrix replace the roots and its dimension $N$ replaces $\log t.$ It is shown in \cite{FKS} that the $N\rightarrow\infty$ 
limit of the GUE statistic is gaussian, and the limiting covariance can be shown\footnote{Nickolas Simm, private communication.} to be equivalent to the Bourgade-Kuan-Rodgers covariance. 
As it will become clear in Section 4, our results apply to any linear statistic that is asymptotically gaussian with the Bourgade-Kuan-Rodgers covariance. 
Thus, under the correspondence $N\thicksim\log t,$ they apply equally to the GUE statistic. The significance of this observation is that
the convergence of our statistics to the gaussian free field or its centered version provides a theoretical explanation for why 
our conjecture about the zeroes can be approached directly from the GUE side. 
\end{remark}
\begin{remark}
Our conjecture has only GUE terms and no arithmetic corrections, contrary to most of the conjectures that we mentioned in the Introduction.  
The reason for this is the choice of the mesoscopic scale $1\ll \lambda(t).$ As Fujii \cite{Fujii2} shows in his analysis of Berry's conjecture, 
which in particular deals with the second moment of the fluctuation of the error term, cf. \eqref{StS}, the GUE term in Berry's formula 
dominates arithmetic corrections precisely under the condition $1\ll \lambda(t)$. As our conjecture involves essentially the same statistic, 
except for smoothing, we expect that in our case this condition achieves the same effect of dominating arithmetic terms.
\end{remark} 
\begin{remark}
The interest in the strong version of our conjecture is that it contains information about the statistical distribution of the zeroes at large but finite $t,$
whereas the weak version only describes the distribution at $t=\infty.$ Indeed, as it will be explained in Section 4, the strong version is equivalent to the weak version provided 
the statistics $S^{(i)}_t(\mu,u)$ converge to their gaussian limits faster than $1/|\log\varepsilon(t)|,$ 
in the sense of the rate of convergence of the variance to its asymptotic value, and this is expected to be the case due to the asymptotic condition
in \eqref{vart}. 
Moreover,
as the strong conjecture fits into the general framework of mod-gaussian convergence, the results of \cite{Feray} and \cite{Meliot} and the explicit knowledge of our
limiting functions make it possible to quantify the normality zone, \emph{i.e.} the scale up to which the tails of our exponential functionals are normal,
and the breaking of symmetry near the edges of the normality zone thereby quantifying precise deviations at large $t.$ The actual computation
of these quantities is left for future research. 
\end{remark}

\section{A Review of the Limit Lognormal Measure and Selberg Integral Distribution}
\noindent In this section we will give a self-contained review of the limit lognormal measure of Mandelbrot-Bacry-Muzy on the unit interval and of the Selberg integral probability distribution mainly following our earlier presentations in \cite{Me4}, \cite{MeIMRN}, and \cite{Me14}.
We will indicate what is known and what is conjectured and refer the reader
to appropriate original publications for the proofs. 
 
The limit lognormal measure (also known as lognormal multiplicative chaos) 
is defined as the exponential functional of the gaussian free field.
Let $\omega_{\mu, L, \varepsilon}(s)$ be a stationary gaussian process in $s,$ whose mean and covariance are functions of three parameters $\mu>0,$ $L>0,$ and $\varepsilon>0.$ We
consider the random measure 
\begin{equation}\label{Mmuvarepsilon}
M_{\mu, L, \varepsilon}(a, \,b)=\int\limits_a^b e^{\omega_{\mu, L, \varepsilon}(s)} \, ds.
\end{equation}
The mean and covariance of $\omega_{\mu, L, \varepsilon}(s)$ are defined to be,
cf. \cite{BM},
\begin{align}
{\bf{E}} \left[\omega_{\mu, L, \varepsilon}(t)\right] & =  -\frac{\mu}{2} \,
\left(1+\log\frac{L}{\varepsilon}\right), \label{mean} \\
{\bf{Cov}}\left[\omega_{\mu, L, \varepsilon}(t), \,\omega_{\mu, L, \varepsilon}
(s)\right] &  = 
\mu \, \log\frac{L}{|t-s|}, \, \varepsilon\leq|t-s|\leq L, \label{cov} \\
{\bf{Cov}}\left[\omega_{\mu, L, \varepsilon}(t),
\,\omega_{\mu, L, \varepsilon}(s)\right] & =  \mu
\left(1+\log\frac{L}{\varepsilon}-\frac{|t-s|}{\varepsilon}\right),
\, |t-s|<\varepsilon, \label{cov2}
\end{align}
and covariance is zero in the remaining case of $|t-s|\geq L.$
Thus, $\varepsilon$ is used as a truncation scale. $L$ is the decorrelation length of the process. $\mu$ is the intermittency parameter (also known as inverse temperature in the physics literature). The two key properties of this construction are, first, that
\begin{equation}\label{normalization}
{\bf{E}}
\bigl[\omega_{\mu, L, \varepsilon}(t)\bigr]=-\frac{1}{2}{\bf{Var}}\bigl[\omega_{\mu, L,\varepsilon}(t)\bigr]
\end{equation}
so that
\begin{equation}\label{normalization2}
{\bf E}\bigl[\exp\bigl(\omega_{\mu, L, \varepsilon}
(s)\bigr)\bigr]=1
\end{equation}
and, second, that
\begin{equation}
{\bf{Var}}\bigl[\omega_{\mu, L, \varepsilon}(t)\bigr] \propto -\log\varepsilon
\end{equation}
is logarithmically divergent as $\varepsilon\rightarrow 0.$ The first property is essential for convergence, the second is responsible for
multifractality, and both are originally due to Mandelbrot
\cite{secondface}. 

The interest in the limit lognormal
construction stems from the $\varepsilon\rightarrow 0$ limit. It is clear that
the $\varepsilon\rightarrow 0$ limit of $\omega_{\mu, L, \varepsilon}(t)$ does not exist as a stochastic process (this limiting ``process'' is known as the gaussian free field). Remarkably, using the theory of $T$-martingales developed by Kahane \cite{K2}, the work of Barral
and Mandelbrot \cite{Pulses} on log-Poisson cascades, and conical constructions of Rajput and Rosinski \cite{RajRos} and Marsan and Schmitt \cite{Schmitt},
Bacry and
Muzy\,\cite{BM1} showed that $M_{\mu, L, \varepsilon}(dt)$ converges weakly (as
a measure on $\mathbb{R}^+$) a.s. to a non-trivial random limit measure $M_{\mu, L}(dt)$ 
\begin{equation}
M_{\mu, L}(a, b)=\lim\limits_{\varepsilon\rightarrow 0} M_{\mu, L, \varepsilon}(a, b),
\end{equation}
provided $0\leq \mu<2,$ and the limit is stationary $M_{\mu, L}(t,t+\tau) \overset{{\rm in \,law}}{=} M_{\mu, L}(0,\tau)$ and
non-degenerate in the sense that ${\bf{E}}[M_{\mu, L}(0, \,t)]=t.$ We will denote the total random mass of $[0, L]$ by $M_{\mu, L},$ 
\begin{equation}
M_{\mu, L} \triangleq M_{\mu, L}(0, \,L).
\end{equation}
It is shown in \cite{BM1} that for $q>0$ we have 
\begin{equation}
q<\frac{2}{\mu} \Longrightarrow {\bf{E}}\bigl[M^q_{\mu, L}\bigr]<\infty \;\text{and}
\; q>\frac{2}{\mu} \Longrightarrow {\bf{E}}\bigl[M^q_{\mu, L}\bigr]=\infty
.
\end{equation}

The fundamental property of the limit measure is that it is multifractal. This can be understood at several levels, for our purposes, this means that its total
mass exhibits stochastic self-similarity, also known as
continuous dilation invariance, as first established in \cite{MRW}. Given $\gamma<1,$ let $\Omega_\gamma$ denote a
gaussian random variable that is independent of the process $\omega_{\mu, L, \varepsilon}(s)$ 
such that
\begin{align}
{\bf E}\bigl[\Omega_\gamma\bigr] & = \frac{1}{2}\mu\log \gamma, \label{OmegaE} \\ 
{\bf Var}\bigl[\Omega_\gamma\bigr] & = -\mu\log \gamma. \label{OmegaV}
\end{align}
Then, there hold the following invariances\footnote{The first invariance was  discovered in \cite{MRW} and later generalized in \cite{BM}. We discovered the other two invariances in \cite{Me2} and developed a general theory of such invariances in \cite{Me5}.}
\begin{align}
\Omega_\gamma + \omega_{\mu, \,L, \,\varepsilon}(s)
& \overset{{\rm in \,law}}{=} 
\omega_{\mu, \, L,\, \gamma\varepsilon}(\gamma s), \label{invariance} \\
 \Omega_\gamma + \omega_{\mu, \,L,\, \varepsilon}(s)
 & \overset{{\rm in \,law}}{=} 
\omega_{\mu,\, L/\gamma,\, \varepsilon}(s), \label{invariance2} \\
\Omega_\gamma + \omega_{\mu,\, L,\, \varepsilon}(s)
& \overset{{\rm in \,law}}{=}
\omega^{(1)}_{\mu(1+\log\gamma), \,L,\, \varepsilon}(s) +
\omega^{(2)}_{-\mu\log\gamma,\, eL,\, \varepsilon}(s), \label{invariance3}
\end{align}
that are understood as equalities in law of stochastic processes in $s$ on the interval $[0, L]$ at fixed $\varepsilon,$ $L,$ and $0<\gamma<1.$ In \eqref{invariance3} the superscripts denote independent copies of the free field, $e$ stands for the base of the natural logarithm, and $e^{-1}<\gamma<1.$ The truncation scale invariance in \eqref{invariance}
implies the multifractal law of the limit measure\footnote{It must be emphasized that this equality is strictly in law, that is, $\Omega_\gamma$ is not a stochastic process, \emph{i.e.} $\Omega_\gamma$ and $\Omega_{\gamma '}$ for $\gamma\neq \gamma '$ are not defined on the same probability space. In particular, \eqref{multifractallaw} determines the distribution of $M_{\mu, L}(0, \,t)$ in terms of the law of the total mass but says nothing about the latter or their joint distribution.}
for $t<L$
\begin{equation}\label{multifractallaw}
M_{\mu, L}(0,\,t) \overset{{\rm in \,law}}{=} \frac{t}{L}\,e^{\Omega_{\frac{t}{L}}}\,M_{\mu, L},
\end{equation}
which implies that the moments of the total mass obey for $0<q<2/\mu$ the multiscaling law
\begin{equation}\label{multi}
{\bf E}\bigl[M_{\mu, L}(0,\,t)^q\bigr] \propto \Bigl(\frac{t}{L}\Bigr)^{q-\frac{\mu}{2}(q^2-q)} 
\end{equation}
as a function of $t<L.$ The decorrelation length invariance in \eqref{invariance2} implies that the dependence of the total mass on $L$ is trivial,\footnote{This invariance also determines how the law of the total mass  behaves under a particular change of probability measure, cf. \cite{Me5} for details.} 
\begin{equation}
M_{\mu, L} \overset{{\rm in \,law}}{=} L\,M_{\mu, 1},
\end{equation}
so that we can restrict ourselves to $L=1$ without a loss of generality.
Henceforth, 
\begin{equation}
L=1,
\end{equation}
and $L$ is dropped from all subsequent formulas. Finally, 
the significance of the intermittency invariance in \eqref{invariance3} 
is that it gives the rule of intermittency differentiation and effectively determines the law of the total mass, cf. Theorem \ref{Differentiation} and the  discussion following it below.

The law of the total mass can be reformulated as a
non-central limit problem. Let us break up the unit interval into the subintervals
of length $\varepsilon$ so that $s_j=j\varepsilon,$
$\omega_j\triangleq \omega_{\mu, \varepsilon} (s_j),$ and $N\varepsilon=1.$
It is shown in \cite{BM1} that the total mass $M_\mu$ can be
approximated as
\begin{equation}\label{Msum}
\varepsilon\sum\limits_{j=0}^{N-1} e^{\omega_j} \rightarrow
M_\mu\,\,{\text as}\,\, \varepsilon\rightarrow 0.
\end{equation}
The essence of this result is that the limit is not affected by one's truncation
of covariance so long as \eqref{normalization} holds. This representation is quite useful in calculations. In particular, it is easy to see that it implies an important relationship between the moments of the generalized total mass\footnote{By a slight abuse of terminology, we refer to any integral of the form $\int_0^1 \varphi(s)\,M_\mu(ds)$ as the generalized total mass.}  of the limit measure and a class of (generalized) Selberg integrals, originally due to \cite{MRW}. Let $\varphi(s)$ be an appropriately chosen test function and $I$ a subinterval of the unit interval, then
\begin{equation}
{\bf{E}} \Bigl[\bigl(\int\limits_I \varphi(s) \,
M_\mu(ds)\bigr)^n\Bigr] = \int\limits_{I^n} \prod_{i=1}^n
\varphi(s_i) \prod\limits_{i<j}^n
|s_i-s_j|^{-\mu} ds_1\cdots ds_n.
\end{equation}
More generally, the same type of result holds for any finite number of
subintervals of the unit interval. For example, the joint $(n,m)$ moment is given by a generalized Selberg integral of dimension $n+m$
\begin{align}
{\bf E}\Bigl[\Bigl(\int\limits_{I_1} \varphi_1(s)\,M_\mu(ds)\Bigr)^{n}\Bigl(\int\limits_{I_2} \varphi_2(s)\,M_\mu(ds)\Bigr)^{m}\Bigr] =  
\int\limits_{I_1^n\times I_2^m} & \prod\limits_{i=1}^{n} \varphi_1(s_i) \prod\limits_{i=n+1}^{n+m} \varphi_2(s_i) \prod\limits_{k<p}^{n+m}|t_p-t_k|^{-\mu}
ds_1\cdots ds_{n+m}.\label{jointmomentlimitmeasure}
\end{align}
It can be shown as a corollary of this equation, cf. \cite{MRW}, that the covariance structure of the total mass is logarithmic. Given $0<s<1,$ 
\begin{equation}\label{covstrucmass}
{\bf Cov}\Bigl(\log\int\limits_s^{s+\Delta s}M_\mu(du), \,\log\int\limits_0^{\Delta s} M_\mu(du)\Bigr) = -\mu\log s + O(\Delta s),
\end{equation}
which indicates that the mass of non-overlapping subintervals of the unit interval exhibits strong stochastic dependence. 
In the special case of $\varphi(s)=s^{\lambda_1}(1-s)^{\lambda_2},$ we have an explicit formula for moments of order $n<2/\mu,$ as was first pointed out in \cite{BDM}, that is given by the classical Selberg integral, cf. Chapter 4 of \cite{ForresterBook} for a modern treatment and \cite{Selberg} for the original derivation,
\begin{equation}\label{selbergformula}
{\bf{E}} \Bigl[\bigl(\int_0^1 s^{\lambda_1}(1-s)^{\lambda_2} \,
M_\mu(ds)\bigr)^n\Bigr] 
=
\prod_{k=0}^{n-1}
\frac{\Gamma(1-(k+1)\mu/2)\Gamma(1+\lambda_1-k\mu/2)\Gamma(1+\lambda_2-k\mu/2)}
{\Gamma(1-\mu/2)\Gamma(2+\lambda_1+\lambda_2-(n+k-1)\mu/2)}.
\end{equation} 
The law of $\int_0^1 s^{\lambda_1}(1-s)^{\lambda_2} \,
M_\mu(ds)$ is not known rigorously, even for $\lambda_1=\lambda_2=0,$ \emph{i.e.} the total mass. It is possible to derive it heuristically so as to
formulate a precise conjecture about it as follows. Consider the expectation of a general functional of the limit lognormal measure
\begin{equation}\label{vfunctional}
v(\mu,\,\varphi,\, f,\,F) \triangleq {\bf E}\Bigl[F\Bigl(\int_0^1 e^{\mu
f(s)}\varphi(s) \,M_\mu(ds)\Bigr)\Bigr].
\end{equation}
The integration with
respect to $M_\mu(ds)$ is understood in the sense of
$\varepsilon\rightarrow 0$ limit so that
$v(\mu,\,\varphi,\,f,\,F)=\lim\limits_{\varepsilon\rightarrow
0}v_\varepsilon(\mu,\,\varphi,\,f,\,F)$ and $v_\varepsilon(\mu,\,\varphi,\, f,\,F)
\triangleq {\bf E}\Bigl[F\Bigl(\int_0^1 e^{\mu f(s)} \varphi(s)\,
M_{\mu, \varepsilon}(ds)\Bigr)\Bigr]$ with $M_{\mu,\varepsilon}(ds)$ as in
\eqref{Mmuvarepsilon}. Also, let $g(s_1,\,s_2)$ be defined by
\begin{equation}
g(s_1, \,s_2) \triangleq -\log|s_1-s_2|.
\end{equation}
Finally, we will use $[0, \,1]^k$ to denote the $k-$dimensional unit
interval $[0,\,1]\times\cdots\times [0,\,1].$ Then, we have the
following rule of intermittency differentiation, cf. \cite{Me3} and \cite{MeIMRN} for derivations and \cite{Me6} for an extension to
the joint distribution of the mass of multiple subintervals of the unit interval.
\begin{theorem}[Intermittency differentiation]\label{Differentiation}
The expectation $v(\mu,\,\varphi,\, f,\,F)$ is invariant under
intermittency differentiation and satisfies
\begin{gather} \frac{\partial}{\partial \mu} v(\mu,\,\varphi,\,
f,\,F) = \int\limits_{[0,\,1]} v\bigl(\mu,\,\varphi,\, f+g(\cdot,
s),\,F^{(1)}\bigr) e^{\mu f(s)}f(s)\varphi(s)\,ds+ \nonumber \\
+\frac{1}{2}\int\limits_{[0,\,1]^2} v\bigl(\mu,\varphi,
f+g(\cdot,s_1)+g(\cdot,s_2),F^{(2)}\bigr)
e^{\mu\bigl(f(s_1)+f(s_2)+g(s_1,s_2)\bigr)}g(s_1,
s_2)\varphi(s_1)\varphi(s_2)\,ds_1\,ds_2. \label{IntermDiff}
\end{gather}
\end{theorem}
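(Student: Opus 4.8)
The plan is to prove \eqref{IntermDiff} first for the $\varepsilon$-regularized functional $v_\varepsilon(\mu,\varphi,f,F)={\bf E}[F(Z_\varepsilon)]$, where $Z_\varepsilon=\int_0^1 e^{\mu f(s)}\varphi(s)\,M_{\mu,\varepsilon}(ds)$ and $M_{\mu,\varepsilon}$ is built from the stationary Gaussian process $\omega_{\mu,\varepsilon}(s)$ of mean and covariance \eqref{mean}--\eqref{cov2}, and then to let $\varepsilon\to 0$. The key structural fact is that, writing $\omega_{\mu,\varepsilon}(s)=-\tfrac{\mu}{2}g_\varepsilon(s,s)+\sqrt{\mu}\,Y_\varepsilon(s)$ with $Y_\varepsilon$ a centered Gaussian field whose $\mu$-independent kernel $g_\varepsilon(s,s')$ equals $g(s,s')=-\log|s-s'|$ on $\varepsilon\le|s-s'|\le 1$, equals the linearized expression of \eqref{cov2} for $|s-s'|<\varepsilon$, and vanishes for $|s-s'|\ge 1$, one has $g_\varepsilon(s,s)=1+\log(1/\varepsilon)$ so that the normalization \eqref{normalization} holds identically in $\varepsilon$, and both the mean and covariance of $\omega_{\mu,\varepsilon}$ are manifestly linear in $\mu$. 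It is convenient to run the whole argument on the discretization $\varepsilon\sum_j(\cdot)$ of \eqref{Msum}, where every manipulation below is an elementary finite-dimensional Gaussian identity, and only afterwards refine the mesh.

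First I would differentiate $v_\varepsilon$ in $\mu$ under the expectation. The chain rule produces three contributions: one where $\partial_\mu$ hits $e^{\mu f(s)}$, bringing down $f(s)$; one where it hits the normalizing factor $e^{-\frac{\mu}{2}g_\varepsilon(s,s)}$, bringing down $-\tfrac12 g_\varepsilon(s,s)$; and one where it hits $e^{\sqrt{\mu}Y_\varepsilon(s)}$, bringing down $\tfrac{1}{2\sqrt{\mu}}Y_\varepsilon(s)$. To the last one I would apply Gaussian integration by parts, ${\bf E}[Y_\varepsilon(s)\,\Phi(Y_\varepsilon)]=\int_0^1 g_\varepsilon(s,s')\,{\bf E}\!\left[\delta\Phi/\delta Y_\varepsilon(s')\right]ds'$, with $\Phi=F^{(1)}(Z_\varepsilon)\,e^{\mu f(s)}\varphi(s)e^{\sqrt{\mu}Y_\varepsilon(s)}$ integrated in $s$. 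This splits into a diagonal piece, arising when the functional derivative hits the explicit $e^{\sqrt{\mu}Y_\varepsilon(s)}$ and carrying a factor $g_\varepsilon(s,s)=1+\log(1/\varepsilon)$, and an off-diagonal piece where it hits $F^{(1)}(Z_\varepsilon)$, producing $F^{(2)}(Z_\varepsilon)$, a kernel $g_\varepsilon(s_1,s_2)$, and a double integral. The diagonal piece cancels exactly against the $-\tfrac12 g_\varepsilon(s,s)$ term coming from the mean; this cancellation is the regularized incarnation of \eqref{normalization} and is precisely what makes the differentiation rule clean.

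It remains to recast the surviving terms. Since $e^{\sqrt{\mu}Y_\varepsilon(s)-\frac{\mu}{2}g_\varepsilon(s,s)}$ is the Cameron--Martin density under which $Y_\varepsilon(\cdot)$ picks up the deterministic shift $\sqrt{\mu}\,g_\varepsilon(s,\cdot)$, absorbing one such factor inside the expectation replaces $Z_\varepsilon$ by $\int_0^1 e^{\mu(f(s')+g_\varepsilon(s,s'))}\varphi(s')M_{\mu,\varepsilon}(ds')$, i.e.\ it implements the replacement $f\mapsto f+g_\varepsilon(\cdot,s)$; absorbing two such factors implements $f\mapsto f+g_\varepsilon(\cdot,s_1)+g_\varepsilon(\cdot,s_2)$ and, from the cross-covariance, an additional deterministic factor $e^{\mu g_\varepsilon(s_1,s_2)}$. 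After this identification the first term becomes $\int_0^1 v_\varepsilon(\mu,\varphi,f+g_\varepsilon(\cdot,s),F^{(1)})\,e^{\mu f(s)}f(s)\varphi(s)\,ds$ and the off-diagonal term becomes $\tfrac12\iint v_\varepsilon(\mu,\varphi,f+g_\varepsilon(\cdot,s_1)+g_\varepsilon(\cdot,s_2),F^{(2)})\,e^{\mu(f(s_1)+f(s_2)+g_\varepsilon(s_1,s_2))}g_\varepsilon(s_1,s_2)\varphi(s_1)\varphi(s_2)\,ds_1ds_2$, which is the $\varepsilon$-regularized form of \eqref{IntermDiff}.

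The last step, and the one I expect to be the main obstacle, is the passage $\varepsilon\to 0$: one must justify interchanging $\lim_{\varepsilon\to0}$ with $\partial_\mu$ and with the $s$- and $(s_1,s_2)$-integrals, and show $v_\varepsilon\to v$ together with $v_\varepsilon(\mu,\varphi,f+g_\varepsilon(\cdot,s_i),F^{(k)})\to v(\mu,\varphi,f+g(\cdot,s_i),F^{(k)})$. This needs uniform-in-$\varepsilon$ moment bounds on $Z_\varepsilon$ and on the shifted integrals, valid for $0\le\mu<2$ by the theory of \cite{K2} and \cite{BM1} (the perturbation $g(\cdot,s_i)=-\log|\cdot-s_i|$ being $M_\mu$-integrable), suitable growth/regularity of $F,F^{(1)},F^{(2)}$ and of $\varphi,f$ so that dominated convergence survives the logarithmic singularity of $g_\varepsilon$ on the diagonal, and an upgrade of the a.s.\ weak convergence $M_{\mu,\varepsilon}(ds)\to M_\mu(ds)$ to convergence of the relevant moments. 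Granting these analytic inputs — exactly the regime in which $v(\mu,\varphi,f,F)$ in \eqref{vfunctional} is well defined — the regularized identity passes to the limit and yields \eqref{IntermDiff}, proving Theorem \ref{Differentiation}.
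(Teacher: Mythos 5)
Your proposal is correct and follows essentially the same route as the derivation the paper relies on (it defers the proof to \cite{Me2}, \cite{Me3}, \cite{MeIMRN}): differentiate the $\varepsilon$-regularized functional in $\mu$ using that mean and covariance of $\omega_{\mu,\varepsilon}$ are linear in $\mu$, apply Gaussian integration by parts, cancel the diagonal term through the normalization \eqref{normalization}, and absorb the exponential factors by the Girsanov-type shift $f\mapsto f+g_\varepsilon(\cdot,s)$ (resp.\ $f+g_\varepsilon(\cdot,s_1)+g_\varepsilon(\cdot,s_2)$ with the extra factor $e^{\mu g_\varepsilon(s_1,s_2)}$), before letting $\varepsilon\to 0$. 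The $\varepsilon\to 0$ interchange you flag as the main obstacle is handled in those references at essentially the same level of (formal) justification you grant, so your attempt does not fall short of the paper's own standard.
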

The mathematical content of \eqref{IntermDiff} is that differentiation with
respect to the intermittency parameter $\mu$ is equivalent to a
combination of two functional shifts induced by the $g$ function. This differentiation rule is nonlocal as it involves the entire path of the process $s\rightarrow M_\mu(0,\,s),$ $s\in(0,\,1).$ It
is clear that both terms in \eqref{IntermDiff} are of the same functional form
as the original functional in \eqref{vfunctional} so that Theorem \ref{Differentiation} allows us
to compute derivatives of all orders. There results the following
formal expansion with some coefficients $H_{n,k}(\varphi)$ that are
independent of $F.$ Let
\begin{gather}
S_l(\mu,\varphi) \triangleq \int\limits_{[0,\,1]^l} \prod_{i=1}^l
\varphi(s_i)\, \prod\limits_{i<j}^l |s_i-s_j|^{-\mu} ds_1\cdots
ds_l, \\
x\triangleq \int\limits_0^1 \varphi(s) \,ds.
\end{gather}
Then, we obtain the formal intermittency expansion
\begin{equation}\label{Expansion}
{\bf E}\Bigl[F\Bigl(\int_0^1 \varphi(s)
\,M_\mu(ds)\Bigr)\Bigr]=F(x)+ \sum\limits_{n=1}^\infty
\frac{\mu^n}{n!}\Bigl[ \sum_{k=2}^{2n} F^{(k)}(x)
H_{n,k}(\varphi)\Bigr].
\end{equation}
The expansion coefficients $H_{n, k}(\varphi)$
are given by the binomial transform of the derivatives of the
positive integral moments
\begin{equation}
H_{n, k}(\varphi) = \frac{(-1)^k}{k!} \sum\limits_{l=2}^k (-1)^l
\binom{k}{l}\,x^{k-l}\,\frac{\partial^n S_l}{\partial
\mu^n}\vert_{\mu=0},
\end{equation}
and satisfy the identity
\begin{equation}
H_{n, k}(\varphi) = 0\,\,\,\forall k>2n.
\end{equation}
\begin{remark}
The last equation and Theorem \ref{Differentiation} say that the intermittency expansion in \eqref{Expansion} is an exactly renormalized expansion in the centered moments of
$\int_0^1 \varphi(s) \,dM_\mu(s).$ Indeed, we have the identity
\begin{equation}
\partial^n/\partial \mu^n \,\vert_{\mu=0} \,{\bf
E}\Bigl[\bigl(\int_0^1 \varphi(s) \,M_\mu(ds)-x\bigl)^k\Bigr]=
k!\,H_{n,k}(\varphi).
\end{equation}
\end{remark}

From now on we will focus on
\begin{equation}
\varphi(s) \triangleq s^{\lambda_1}(1-s)^{\lambda_2},
\;\lambda_1,\,\lambda_2>-\frac{\mu}{2},
\end{equation}
which corresponds to the full Selberg integral. Clearly, we have
\begin{equation}
x =
\frac{\Gamma(1+\lambda_1)\Gamma(1+\lambda_2)}{\Gamma(2+\lambda_1+\lambda_2)}.
\end{equation}
The moments $S_l(\mu, \,\varphi)=S_l(\mu, \,\lambda_1, \,\lambda_2)$ are given
by Selberg's product formula in \eqref{selbergformula}. By expanding $\log S_l(\mu,
\,\lambda_1, \,\lambda_2)$ in powers of $\mu$ near zero and 
computing the resulting $H_{n,k}(\varphi)$ coefficients,
we derived in \cite{Me4} and \cite{MeIMRN} the following expansion for the Mellin transform
in terms of Bernoulli polynomials and values of the Hurwitz zeta function at the integers
\begin{align}
{\bf E}\Bigl[\Bigl(\int_0^1 s^{\lambda_1} (1-s)^{\lambda_2} &
\,M_\mu(ds)\Bigr)^q\Bigr] = x^q \exp\Bigl(\sum_{r=0}^\infty
\frac{\mu^{r+1}}{r+1} \,b_r(q, \,\lambda_1,\,\lambda_2)\Bigr), \label{fullasymptseries}\\
b_r(q, \,\lambda_1,\,\lambda_2) & \triangleq \frac{1}{2^{r+1}}
\Bigl[\bigl(\zeta(r+1, 1+\lambda_1)+\zeta(r+1,
1+\lambda_2)\bigr)\Bigl(\frac{B_{r+2}(q)-B_{r+2}}{r+2}\Bigr) -
\nonumber \\ & -\zeta(r+1)q + \zeta(r+1)
\Bigl(\frac{B_{r+2}(q+1)-B_{r+2}}{r+2}\Bigr) - \nonumber \\ & -
\zeta(r+1, 2+\lambda_1+\lambda_2)
\Bigl(\frac{B_{r+2}(2q-1)-B_{r+2}(q-1)}{r+2}\Bigr)\Bigr].
\end{align}
The series in \eqref{fullasymptseries} is generally divergent\footnote{It is convergent for ranges of integral $q$ as shown in \cite{Me4}.} and interpreted as the
asymptotic expansion of the Mellin transform in the limit $\mu\rightarrow 0.$
The intermittency expansion of the Mellin transform implies 
a similar expansion of the general transform of
$\log\widetilde{M}_\mu(\lambda_1,\,\lambda_2),$ 
where we introduced the normalized distribution
\begin{equation}
\widetilde{M}_\mu (\lambda_1,\,\lambda_2)\triangleq
\frac{\Gamma(2+\lambda_1+\lambda_2)}{\Gamma(1+\lambda_1)\Gamma(1+\lambda_2)}
\int_0^1 s^{\lambda_1} (1-s)^{\lambda_2} \,M_\mu(ds).
\end{equation}
Then, given constants $a$ and $s$ and a smooth function
$F(s),$ the intermittency expansion is
\begin{align}
{\bf
E}\Bigl[F\bigl(s+a\log\widetilde{M}_\mu(\lambda_1,\,\lambda_2)\bigr)\Bigr] &
= \sum\limits_{n=0}^\infty
F_n\bigl(a,\,s,\,\lambda_1,\,\lambda_2\bigr) \frac{\mu^n}{n!}, \\
F_0\bigl(a,\,s,\,\lambda_1,\,\lambda_2\bigr) & =F(s), \\
F_{n+1}\bigl(a,\,s,\,\lambda_1,\,\lambda_2\bigr) & =
\sum\limits_{r=0}^n \frac{n!}{(n-r)!}  \, b_r\Bigl(a\frac{d}{ds},
\,\lambda_1,\,\lambda_2\Bigr)
F_{n-r}\bigl(a,\,s,\,\lambda_1,\,\lambda_2\bigr).
\end{align}
The expansion is thus obtained by replacing $q$ with $a\,d/ds$ in the solution
for the Mellin transform in \eqref{fullasymptseries}.
We note parenthetically that the general transform is
particularly interesting in the special case of a purely imaginary
$a,$ in which case the operator $F(s)\rightarrow {\bf
E}\Bigl[F\bigl(s+a\log\widetilde{M}_\mu(\lambda_1,\,\lambda_2)\bigr)\Bigr]$
is formally self-adjoint.

The calculation of the intermittency expansion of the Mellin transform 
naturally poses the problem of constructing a positive probability distribution
such that its positive integral moments are given by Selberg's formula, cf.
\eqref{selbergformula}, and the asymptotic expansion of its Mellin transform 
coincides with the intermittency expansion in \eqref{fullasymptseries}. Equivalently, one wants to construct a meromorphic function $\mathfrak{M}(q\,|\,\mu,\lambda_1,\lambda_2)$
that (1) recovers Selberg's formula for positive integral $q<2/\mu,$ (2) is the Mellin transform of a probability distribution for 
$\Re(q)<2/\mu$ as long as $0<\mu<2,$ and (3) has the asymptotic
expansion in the limit $\mu\rightarrow 0$ that is given in \eqref{fullasymptseries}.
Such a function can be naturally thought of as an
analytic continuation of the Selberg integral as a function of its dimension to the complex plane. 

We will describe an analytic and a probabilistic solution to this problem that we first found in \cite{Me4} in the special case of $\lambda_1=\lambda_2=0$ and
then in \cite{MeIMRN} in general.\footnote{The general case was first considered by Fyodorov {\it et. al.} \cite{FLDR}, who gave an equivalent expression for the right-hand side of
\eqref{Mfunc} and so recovered the positive integral moments without proving analytically that their formula corresponds to the Mellin transform of a probability distribution or matching the asymptotic expansion, \emph{i.e.} solved (1) only. Instead, they used Selberg's formula to deduce the functional equation in \eqref{funceqM1} for positive integral $q,$ conjectured that it holds for complex $q,$ and then found a meromorphic function that satisfies \eqref{funceqM1}.} Define 
\begin{align}
\mathfrak{M}(q\,|\,\mu,\lambda_1,\lambda_2)  & \triangleq
\tau^{\frac{q}{\tau}} (2\pi)^{q}\,\Gamma^{-q}\bigl(1-1/\tau\bigr)
\frac{\Gamma_2(1-q+\tau(1+\lambda_1)\,|\,\tau)}{\Gamma_2(1+\tau(1+\lambda_1)\,|\,\tau)}
\frac{\Gamma_2(1-q+\tau(1+\lambda_2)\,|\,\tau)}{\Gamma_2(1+\tau(1+\lambda_2)\,|\,\tau)}
\star
\nonumber \\ & \star
\frac{\Gamma_2(-q+\tau\,|\,\tau)}{\Gamma_2(\tau\,|\,\tau)}
\frac{\Gamma_2(2-q+\tau(2+\lambda_1+\lambda_2)\,|\,\tau)}{\Gamma_2(2-2q+\tau(2+\lambda_1+\lambda_2)\,|\,\tau)}
\label{Mfunc}
\end{align}
\begin{theorem}[Existence of the Selberg integral distribution]\label{BSM}
Let $0<\mu<2,$ $\lambda_i>-\mu/2,$ and $\tau=2/\mu.$ Then,
$\mathfrak{M}(q\,|\,\mu,\lambda_1,\lambda_2)$ is the
Mellin transform of a probability distribution on $(0,\infty)$
for $\Re(q)<\tau,$ and its moments satisfy  
\begin{align}
\mathfrak{M}(n\,|\,\mu,\lambda_1,\lambda_2) & = \prod_{k=0}^{n-1}
\frac{\Gamma(1-(k+1)/\tau)\Gamma(1+\lambda_1-k/\tau)\Gamma(1+\lambda_2-k/\tau)}
{\Gamma(1-1/\tau)\Gamma\bigl(2+\lambda_1+\lambda_2-(n+k-1)/\tau\bigr)},
\; 1\leq n<\Re(\tau), \label{posmomentformula} \\
\mathfrak{M}(-n\,|\,\mu,\lambda_1,\lambda_2) & = \prod_{k=0}^{n-1}
\frac{\Gamma\bigl(2+\lambda_1+\lambda_2+(n+2+k)/\tau\bigr)
\Gamma\bigl(1-1/\tau\bigr) }{
\Gamma\bigl(1+\lambda_1+(k+1)/\tau\bigr)\Gamma\bigl(1+\lambda_2+(k+1)/\tau\bigr)
\Gamma\bigl(1+k/\tau\bigr) },\; n\in\mathbb{N}. \label{negmomentformula}
\end{align}
The function $\mathfrak{M}(q\,|\,\mu,\lambda_1,\lambda_2)$ satisfies
the functional equations
\begin{align}
\mathfrak{M}(q\,|\,\mu,\lambda_1,\lambda_2) & = 
\mathfrak{M}(q-1\,|\,\mu,\lambda_1,\lambda_2) \,
\frac{\Gamma(1-q/\tau)\Gamma\bigl(2+\lambda_1+\lambda_2-(q-2)/\tau\bigr)}{\Gamma(1-1/\tau)}
\star \nonumber \\ & \star
\frac{\Gamma\bigl(1+\lambda_1-(q-1)/\tau\bigr)\Gamma\bigl(1+\lambda_2-(q-1)/\tau\bigr)}
{\Gamma\bigl(2+\lambda_1+\lambda_2-(2q-2)/\tau\bigr)\Gamma\bigl(2+\lambda_1+\lambda_2-(2q-3)/\tau\bigr)}. \label{funceqM1}
\end{align}
\begin{align}
\mathfrak{M}(q\,|\,\mu,\lambda_1,\lambda_2) & = 
\mathfrak{M}(q-\tau\,|\,\mu,\lambda_1,\lambda_2)\,\tau
(2\pi)^{\tau-1}\Gamma^{-\tau}\Bigl(1-\frac{1}{\tau}\Bigr)
\Gamma(\tau-q) \star \nonumber \\ &\star
\frac{\Gamma\bigl((1+\lambda_1)\tau-(q-1)\bigr)\Gamma\bigl((1+\lambda_2)\tau-(q-1)\bigr)}{\Gamma\bigl((2+\lambda_1+\lambda_2)\tau-(2q-2)\bigr)}
\frac{\Gamma\bigl((2+\lambda_1+\lambda_2)\tau-(q-2)\bigr)}{\Gamma\bigl((3+\lambda_1+\lambda_2)\tau-(2q-2)\bigr)}, \label{funceqMtau}
\end{align}
The function $\log\mathfrak{M}(q\,|\,\mu,\lambda_1,\lambda_2)$ has
the asymptotic expansion as $\mu\rightarrow +0$
\begin{gather}
\log\mathfrak{M}(q\,|\,\mu,\lambda_1,\lambda_2) \thicksim
q\log\Bigl(\frac{\Gamma(1+\lambda_1)\Gamma(1+\lambda_2)}{\Gamma(2+\lambda_1+\lambda_2)}\Bigr)+
\sum\limits_{r=0}^\infty \Bigl(\frac{\mu}{2}\Bigr)^{r+1}
\frac{1}{r+1}\Bigl[-\zeta(r+1)q+\nonumber
\\+\bigl(\zeta(r+1, 1+\lambda_1)+\zeta(r+1,
1+\lambda_2)\bigr)\Bigl(\frac{B_{r+2}(q)-B_{r+2}}{r+2}\Bigr)
 + \zeta(r+1) \star \nonumber \\
\star\Bigl(\frac{B_{r+2}(q+1)-B_{r+2}}{r+2}\Bigr) - \zeta(r+1,
2+\lambda_1+\lambda_2)
\Bigl(\frac{B_{r+2}(2q-1)-B_{r+2}(q-1)}{r+2}\Bigr)\Bigr].\label{asymptoticgeneral}
\end{gather}
\end{theorem}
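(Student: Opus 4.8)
I would establish the four parts of Theorem~\ref{BSM} separately, in the order: functional equations, integer moments, the Mellin-transform-of-a-probability-measure property, and the $\mu\to+0$ asymptotics. The functional equations \eqref{funceqM1} and \eqref{funceqMtau} are purely mechanical. Forming $\mathfrak M(q\,|\,\mu,\lambda_1,\lambda_2)/\mathfrak M(q-1\,|\,\mu,\lambda_1,\lambda_2)$ and $\mathfrak M(q\,|\,\mu,\lambda_1,\lambda_2)/\mathfrak M(q-\tau\,|\,\mu,\lambda_1,\lambda_2)$ directly from \eqref{Mfunc}, each double gamma in the numerator meets one in the denominator whose argument differs by $1$, by $2$, or by $\tau$; applying the two shift relations for $\Gamma_2$ recalled above (twice for the $\Gamma_2(2-2q+\cdots)$ block) reduces all the double gammas to ordinary $\Gamma$'s, and the elementary prefactor $\tau^{q/\tau}(2\pi)^q\Gamma^{-q}(1-1/\tau)$ supplies the remaining elementary factors. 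For the moments I would check $\mathfrak M(0\,|\,\mu,\lambda_1,\lambda_2)=1$ --- at $q=0$ every $\Gamma_2$-ratio and every prefactor in \eqref{Mfunc} equals $1$ --- and then iterate \eqref{funceqM1}: telescoping upward produces exactly the product \eqref{posmomentformula}, which under $1/\tau=\mu/2$ is Selberg's formula \eqref{selbergformula}, and iterating downward gives \eqref{negmomentformula}. Analyticity and non-vanishing of $\mathfrak M$ on $\Re(q)<\tau$ then follow because the poles of $\Gamma_2(\,\cdot\,|\,\tau)$ sit at $w=-(k_1+k_2\tau)$ while the arguments $a_i=1+\tau(1+\lambda_i)$, $\tau$, and $b=2+\tau(2+\lambda_1+\lambda_2)$ occurring in \eqref{Mfunc} satisfy $a_i>\tau$ and $b>2\tau$.

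\noindent The substance of the theorem is that $\mathfrak M(\,\cdot\,|\,\mu,\lambda_1,\lambda_2)$ is the Mellin transform of a probability law on $(0,\infty)$ for $0<\mu<2$. The plan is to show that, with $X$ that law, $Y:=\log X$ is infinitely divisible on $\mathbb R$, by writing $\log\mathfrak M(q)=\mathfrak m q+\frac{1}{2}\sigma^2q^2+\int_{\mathbb R\setminus\{0\}}\bigl(e^{qu}-1-qu/(1+u^2)\bigr)\,d\mathcal M(u)$ with $\sigma^2\ge 0$ and $\mathcal M\ge 0$ a L\'evy measure; the strip of validity $\Re(q)<\tau$ is then precisely where $\int_{u>1}e^{qu}\,d\mathcal M(u)<\infty$, $\mathcal M$ carrying the $e^{-\tau u}$ tail that corresponds to a Fr\'echet factor of index $\tau$, and the law is uniquely pinned down by Mellin inversion on the strip. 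To produce this representation I would feed into \eqref{Mfunc} a Malmsten--Binet integral formula for $\log\bigl(\Gamma_2(w\,|\,\tau)/\Gamma_2(w_0\,|\,\tau)\bigr)$, of the shape $\int_0^\infty\bigl(e^{-wt}-e^{-w_0 t}\bigr)\frac{dt}{t\,(1-e^{-t})(1-e^{-\tau t})}$ plus a compensating polynomial of degree at most two, applied to the three ratios $\Gamma_2(a_i-q\,|\,\tau)/\Gamma_2(a_i\,|\,\tau)$ and to the block $\Gamma_2(b-q\,|\,\tau)/\Gamma_2(b-2q\,|\,\tau)$; after rescaling $t\mapsto t/2$ in the $e^{2qt}$ piece of the last block, the four integrands combine over the common kernel into a single spectral density $d\mathcal M(u)=\rho_{\mu,\lambda_1,\lambda_2}(u)\,du/u$ which, for $\lambda_1=\lambda_2=0$, is exactly the bracketed expression of Corollary~\ref{noncentral}, while the linear and constant pieces (together with $\tau^{q/\tau}(2\pi)^q\Gamma^{-q}(1-1/\tau)$) are absorbed into $\mathfrak m$ and the Frullani-type residue of the rescaling produces $\sigma^2=4\log 2/\tau$.

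\noindent The one genuinely non-routine point is the nonnegativity of $\rho_{\mu,\lambda_1,\lambda_2}$ on $(0,\infty)$, i.e.\ that $\mathcal M$ is a positive measure; this is also the place where the admissible range $0<\mu<2$, $\tau>1$, really bites --- it is needed in any case for $\Gamma(1-1/\tau)$ in \eqref{Mfunc} and \eqref{posmomentformula} to be a regular positive number, and it mirrors the existence threshold of the limit lognormal measure. For $\lambda_1=\lambda_2=0$ the inequality $\rho\ge 0$, after clearing the positive factor $(e^{u}-1)(e^{\tau u}-1)$ and using $e^{u}-1=(e^{u/2}-1)(e^{u/2}+1)$ together with its scale-$\tau$ analogue, reduces to $1+\alpha^2+\alpha^4\beta^2\ge\alpha^3+\alpha^2\beta+\alpha^3\beta$ with $\alpha=e^{-u/2}$, $\beta=e^{-\tau u/2}$, which I would prove by examining equality at $u=0$ and the sign of the derivative; the general $\lambda_1,\lambda_2$ case goes through the same clearing-of-denominators reduction. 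With $\rho\ge 0$ and $\sigma^2\ge 0$ in place, the L\'evy--Khintchine theorem supplies the infinitely divisible $Y$ and hence $X=e^Y$. As a back-up route I would keep the observation that, once the shift relations are applied, the $\Gamma_2$-ratios become Mellin transforms of a Fr\'echet variable and of independent Barnes beta variables and the prefactor together with the surviving $q^2$ term is a lognormal, so that $X$ is manifestly a product of independent nonnegative variables --- the factorization recorded in Corollary~\ref{noncentral}.

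\noindent For the asymptotics \eqref{asymptoticgeneral} I would expand the same integral representation of $\log\mathfrak M$ in powers of $\mu=2/\tau$. Inserting $t/(e^{t}-1)=\sum_{n\ge0}B_n t^n/n!$ and the Hurwitz-zeta integrals $\zeta(s,a)=\Gamma(s)^{-1}\int_0^\infty t^{s-1}e^{-at}(1-e^{-t})^{-1}\,dt$, the contribution of each $\Gamma_2$-block to the coefficient of $\mu^{r+1}$ turns into a finite combination of $\zeta(r+1,1+\lambda_1)$, $\zeta(r+1,1+\lambda_2)$, $\zeta(r+1,2+\lambda_1+\lambda_2)$, $\zeta(r+1)$ and of the Bernoulli polynomials $B_{r+2}(q)$, $B_{r+2}(q\pm1)$, $B_{r+2}(2q-1)$; adding the $\mu$-expansion of the elementary prefactor reproduces \eqref{asymptoticgeneral}. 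This step is bookkeeping once the integral representation is fixed; the only care needed is in tracking the polynomial-in-$q$ remainders of the individual $\Gamma_2$ factors, and the specialization $\lambda_1=\lambda_2=0$ against the intermittency expansion \eqref{fullasymptseries} provides a consistency check.
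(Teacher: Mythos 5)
Your plan is sound in outline, and for the routine parts it matches what is actually done in the sources this paper quotes (note the paper itself does not prove Theorem \ref{BSM}; it imports it from \cite{Me4}, \cite{MeIMRN}, \cite{Me14}): the shift relations of $\Gamma_2$ do yield \eqref{funceqM1}--\eqref{funceqMtau} mechanically, telescoping from $\mathfrak M(0\,|\,\mu,\lambda_1,\lambda_2)=1$ gives \eqref{posmomentformula}--\eqref{negmomentformula}, and the Malmsten-type integral representation of $\log\Gamma_2$ expanded in powers of $\mu=2/\tau$ gives \eqref{asymptoticgeneral}. Where you genuinely diverge is the existence part, which is the substance of the theorem. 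You make the aggregated L\'evy--Khinchine representation primary and propose to verify nonnegativity of the combined spectral density by a bare-hands inequality, keeping the product decomposition only as a ``back-up route''; the route recorded in the paper (Theorems \ref{eta22} and \ref{B2}) inverts this: after the shift relations, $\mathfrak M$ is recognized as the product of Mellin transforms of a lognormal, a Fr\'echet variable of index $\tau$, and independent Barnes beta variables of type $(2,2)$, and each factor is a bona fide probability law because its own L\'evy integrand in \eqref{LKH} is manifestly nonnegative for $b_0,b_1,b_2,\tau>0$; infinite divisibility and the aggregated spectral function then follow by summing, with no inequality left to check. Your direct route does work at $\lambda_1=\lambda_2=0$: your reduction to $1+\alpha^2+\alpha^4\beta^2\ge\alpha^3+\alpha^2\beta+\alpha^3\beta$ is the correct algebra, and the inequality holds (view the left-minus-right side as a quadratic in $\beta$ whose discriminant $\alpha^4(4\alpha^3-3\alpha^2+2\alpha-3)$ is nonpositive for $\alpha\in(0,1]$). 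But for general $\lambda_1,\lambda_2$ you only assert that ``the same clearing-of-denominators reduction'' goes through, and that is exactly the step the factorization is designed to make automatic; so either promote your back-up factorization to the main argument, or carry out the general-$\lambda$ positivity in detail. In terms of what each approach buys: your aggregated-density argument produces the L\'evy--Khinchine data and the strip $\Re(q)<\tau$ (the $e^{-\tau u}$ tail of the Fr\'echet factor) in one stroke, while the factorization gives positivity for free, identifies the factors and their parameters explicitly (hence Theorem \ref{B2} and Corollary \ref{noncentral}), and also yields absolute continuity and the moment-indeterminacy statements, which are awkward to extract from the combined density alone.
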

The structure of the corresponding probability distribution, which we denote by
$M_{(\mu, \lambda_1, \lambda_2)},$ is most naturally explained using the theory of Barnes beta distributions that we developed in \cite{MeIMRN}, \cite{Me13}. For simplicity, we restrict ourselves here to the special case of the distribution of type $(2,2)$
and refer the reader to \cite{Me13}, \cite{Me14} for the general case. 
\begin{theorem}[Barnes beta distribution of type $(2,2)$]\label{eta22}
Let $b_0, b_1, b_2>0$ and $\tau>0.$ Define
\begin{align}
\eta(q\,|\,\tau, b) & \triangleq
\frac{\Gamma_2(q+b_0\,|\,\tau)}{\Gamma_2(b_0\,|\,\tau)}
\frac{\Gamma_2(b_0+b_1\,|\,\tau)}{\Gamma_2(q+b_0+b_1\,|\,\tau)}
\frac{\Gamma_2(b_0+b_2\,|\,\tau)}{\Gamma_2(q+b_0+b_2\,|\,\tau)}
\frac{\Gamma_2(q+b_0+b_1+b_2\,|\,\tau)}{\Gamma_2(b_0+b_1+b_2\,|\,\tau)}, \\
& = \prod\limits_{n_1, n_2=0}^\infty \Bigl[
\frac{b_0+n_1+n_2\tau}{q+b_0+n_1+n_2\tau}\frac{q+b_0+b_1+n_1+n_2\tau}
{b_0+b_1+n_1+n_2\tau} \frac{q+b_0+b_2+n_1+n_2\tau}
{b_0+b_2+n_1+n_2\tau}\star \nonumber \\
& \star \frac{b_0+b_1+b_2+n_1+n_2\tau}
{q+b_0+b_1+b_2+n_1+n_2\tau}\Bigr]. \label{barnesfactor}
\end{align}
Then, $\eta(q\,|\,\tau ,b)$ is the Mellin transform of a probability
distribution on $(0, 1).$ Denote it by $\beta(\tau, b).$
\begin{equation}
{\bf E}\bigl[\beta(\tau,b)^q\bigr] = \eta(q\,|\,\tau,\,b),\;
\Re(q)>-b_0.
\end{equation}
The distribution $-\log\beta(\tau, b)$ is absolutely continuous and infinitely divisible on
$(0, \infty)$ and has the L\'evy-Khinchine decomposition
\begin{equation}\label{LKH}
{\bf E}\Bigl[\exp\bigl(q\log\beta(\tau, b)\bigr)\Bigr] =
\exp\Bigl(\int\limits_0^\infty (e^{-xq}-1) e^{-b_0
x} \frac{(1-e^{-b_1 x})(1-e^{-b_2 x})}{(1-e^{-x})(1-e^{-\tau x})} \frac{dx}{x}\Bigr),
\; \Re(q)>-b_0.
\end{equation}
\end{theorem}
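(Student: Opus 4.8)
The plan is to pass from the double-gamma expression for $\eta(q\,|\,\tau,b)$ to the infinite product \eqref{barnesfactor}, then from that product to the L\'evy--Khinchine integral in \eqref{LKH}, and finally to read off from \eqref{LKH} that $\eta(q\,|\,\tau,b)$ is the Mellin transform of an absolutely continuous, log-infinitely divisible law on $(0,1)$. Throughout write $C\triangleq\{b_0,\,b_0+b_1,\,b_0+b_2,\,b_0+b_1+b_2\}$ and attach to $c\in C$ the signs $\epsilon_c\triangleq+1,-1,-1,+1$ respectively, so that $\eta(q\,|\,\tau,b)=\prod_{c\in C}\bigl(\Gamma_2(q+c\,|\,\tau)/\Gamma_2(c\,|\,\tau)\bigr)^{\epsilon_c}$. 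The single structural input that drives the whole argument is the pair of \emph{balancing identities} $\sum_{c\in C}\epsilon_c=0$ and $\sum_{c\in C}\epsilon_c\,c=0$, both of which follow at once from $b_0+(b_0+b_1+b_2)=(b_0+b_1)+(b_0+b_2)$.

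\textbf{Step 1: the product formula \eqref{barnesfactor}.} I would substitute the Barnes infinite product for $\Gamma_2^{-1}(w\,|\,\tau)$ recorded above; dividing the resulting expressions for $\Gamma_2(q+c\,|\,\tau)$ and $\Gamma_2(c\,|\,\tau)$ and writing $m\triangleq k_1+k_2\tau$, that ratio equals $e^{-[P(q+c\,|\,\tau)-P(c\,|\,\tau)]}\,\tfrac{c}{q+c}\,\prod_{(k_1,k_2)\neq(0,0)}\tfrac{m+c}{m+q+c}\,e^{\,q/m-(q^2+2qc)/(2m^2)}$. Raising to the power $\epsilon_c$ and multiplying over $c\in C$: since $P(q+c\,|\,\tau)-P(c\,|\,\tau)$ is a polynomial in $c$ of degree $1$ with coefficients depending only on $q$ and $\tau$, and the exponent $q/m-(q^2+2qc)/(2m^2)$ is likewise affine in $c$, each of these quantities becomes — after $\epsilon_c$-weighted summation — a linear combination of $\sum_c\epsilon_c$ and $\sum_c\epsilon_c c$ and hence vanishes by the balancing identities. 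Thus the degree-$2$ polynomial prefactor and the linear and quadratic Weierstrass regularizers all disappear, and what survives is exactly $\prod_{k_1,k_2\ge0}\prod_{c\in C}\bigl(\tfrac{k_1+k_2\tau+c}{\,k_1+k_2\tau+q+c\,}\bigr)^{\epsilon_c}$, which is \eqref{barnesfactor} upon inserting the four values of $c$. A short computation using the same two identities shows each block $\prod_{c\in C}\bigl(\tfrac{m+c}{m+q+c}\bigr)^{\epsilon_c}$ equals $1+O(m^{-3})$, and since $\sum_{k_1,k_2\ge0}(k_1+k_2\tau)^{-3}<\infty$ the product converges absolutely, which also legitimizes the rearrangements.

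\textbf{Step 2: L\'evy--Khinchine and identification of $\beta(\tau,b)$.} Taking logarithms in \eqref{barnesfactor} and applying the Frullani identity $\log\tfrac{a}{a+q}=\int_0^\infty(e^{-qx}-1)e^{-ax}\,\tfrac{dx}{x}$ (valid for $\Re(a)>0$, $\Re(a+q)>0$) to each of the four factors in each $(k_1,k_2)$-block, one obtains a double sum of integrals; the smallest $a$ that occurs is $a=b_0$, which pins down the condition $\Re(q)>-b_0$. Fubini then moves the (absolutely convergent) double sum inside the integral — the only delicate point, integrability at $x\to0$, being handled by carrying out the $c$-sum first, $\sum_{c\in C}\epsilon_c e^{-cx}=e^{-b_0x}(1-e^{-b_1x})(1-e^{-b_2x})=O(x^2)$, so that the summed integrand is $O(x)$ near the origin. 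Summing the geometric series $\sum_{k_1\ge0}e^{-k_1x}=(1-e^{-x})^{-1}$ and $\sum_{k_2\ge0}e^{-k_2\tau x}=(1-e^{-\tau x})^{-1}$ collapses everything into
\[
\log\eta(q\,|\,\tau,b)=\int_0^\infty(e^{-qx}-1)\,e^{-b_0x}\,\frac{(1-e^{-b_1x})(1-e^{-b_2x})}{(1-e^{-x})(1-e^{-\tau x})}\,\frac{dx}{x},\qquad\Re(q)>-b_0.
\]
Let $\nu(dx)$ be the measure with this density on $(0,\infty)$. It is nonnegative since $b_1,b_2>0$; it behaves like $(b_1b_2/\tau)\,dx/x$ as $x\to0$, so $\int_{(0,1)}x\,\nu(dx)<\infty$ while $\nu\bigl((0,\infty)\bigr)=\infty$; and it decays like $e^{-b_0x}\,dx/x$ as $x\to\infty$, so $\int_{(1,\infty)}\nu(dx)<\infty$. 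Hence $\nu$ is the L\'evy measure of a driftless subordinator; letting $Y$ be its value at time $1$, one has $\mathbf{E}[e^{-qY}]=\exp\bigl(\int_0^\infty(e^{-qx}-1)\nu(dx)\bigr)$ for $\Re(q)\ge0$, and this extends analytically to $\Re(q)>-b_0$ since the exponential tail of $\nu$ gives $Y$ finite exponential moments of every order below $b_0$. Setting $\beta(\tau,b)\triangleq e^{-Y}$: infinite divisibility of $-\log\beta(\tau,b)=Y$ on $(0,\infty)$ is immediate; $\nu\bigl((0,\infty)\bigr)=\infty$ forces $\mathbf{P}(Y=0)=0$, so $\beta(\tau,b)\in(0,1)$ almost surely; and by the displayed identity $\mathbf{E}[\beta(\tau,b)^q]=\eta(q\,|\,\tau,b)$ for $\Re(q)>-b_0$, which is precisely the asserted Mellin transform together with \eqref{LKH}. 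For absolute continuity I would invoke the classical criterion (due to Tucker) that an infinitely divisible law whose L\'evy measure is absolutely continuous with infinite total mass is itself absolutely continuous; this applies to $Y$, hence to $\beta(\tau,b)=e^{-Y}$.

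\textbf{Main obstacle.} The genuinely technical part is Step 1: keeping signs and shifted arguments straight and checking that \emph{every} Weierstrass regularizer — the degree-$2$ polynomial prefactor and both exponential correctors inside the infinite product — cancels, together with the estimate $\prod_{c\in C}\bigl(\tfrac{m+c}{m+q+c}\bigr)^{\epsilon_c}=1+O(m^{-3})$ that secures absolute convergence and justifies the rearrangements. All of this is forced by the two balancing identities, so once those are isolated the rest is bookkeeping; the only other point needing care is the Fubini step at $x\to0$ in Step 2, which the factorization $\sum_{c\in C}\epsilon_c e^{-cx}=e^{-b_0x}(1-e^{-b_1x})(1-e^{-b_2x})$ takes care of.
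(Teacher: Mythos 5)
The paper itself does not prove Theorem \ref{eta22}: it is stated as a review of results established in \cite{MeIMRN} and \cite{Me13}, so there is no in-paper proof to compare against. Judged on its own terms, your argument is correct and follows essentially the route of those references: the two balancing identities $\sum_c\epsilon_c=0$, $\sum_c\epsilon_c c=0$ kill the degree-two prefactor and both Weierstrass regularizers, giving the Shintani-type product \eqref{barnesfactor} with blocks $1+O(m^{-3})$; termwise Frullani plus summation of the geometric series yields the L\'evy--Khinchine integral \eqref{LKH} for $\Re(q)>-b_0$; and realizing $-\log\beta(\tau,b)$ as the time-one marginal of a driftless subordinator with this (infinite-mass, absolutely continuous, exponentially decaying) L\'evy measure gives the support in $(0,1)$, log-infinite divisibility, and absolute continuity via Tucker's criterion. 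Two minor bookkeeping points: after summing the geometric series the integrand in \eqref{LKH} tends to the constant $-q\,b_1b_2/\tau$ as $x\to 0$, so it is $O(1)$ rather than $O(x)$ (still integrable, and it is your per-block $O(x^2)$ bound, together with the exponential decay in $m$, that actually justifies Tonelli for the $(k_1,k_2)$-sum); and the passage from ${\bf E}[e^{-qY}]$ on $\Re(q)\geq 0$ to the strip $\Re(q)>-b_0$ should be stated explicitly as analytic continuation, legitimized by ${\bf E}[e^{\theta Y}]<\infty$ for $\theta<b_0$, which follows from $\int_1^\infty e^{\theta x}\,\nu(dx)<\infty$ exactly as you indicate.
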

We can now describe the probabilistic structure of the Selberg integral distribution. Let $\tau>1$ and define a lognormal random variable
\begin{equation}
L \triangleq \exp\bigl(\mathcal{N}(0,\,4\log 2/\tau)\bigr),
\end{equation}
where $\mathcal{N}(0,\,4\log 2/\tau)$ denotes a zero-mean gaussian with variance $4\log 2/\tau,$ and a  Fr\'echet variable $Y$ having density
$\tau\,y^{-1-\tau}\exp\bigl(-y^{-\tau}\bigr)\,dy,\; y>0,$ so that
its Mellin transform is
\begin{equation}
{\bf E}\bigl[Y^q\bigr] = \Gamma\Bigl(1-\frac{q}{\tau}\Bigr),\,\,\,
\Re(q)<\tau,
\end{equation}
and $\log Y$ is infinitely divisible. Given
$\lambda_i>-1/\tau,$ let $X_1,\,X_2,\,X_3$ have the $\beta^{-1}(\tau, b)$ distribution with the parameters\footnote{
Without loss of generality, $\lambda_2\geq\lambda_1.$ 
If $\lambda_2=\lambda_1,$ then $X_1=1,$ otherwise the parameters of $X_1$ satisfy Theorem \ref{eta22}.
}
\begin{align}
X_1 &\triangleq \beta_{2,2}^{-1}\Bigl(\tau,
b_0=1+\tau+\tau\lambda_1,\,b_1=\tau(\lambda_2-\lambda_1)/2, \,
b_2=\tau(\lambda_2-\lambda_1)/2\Bigr),\\
X_2 & \triangleq \beta_{2,2}^{-1}\Bigl(\tau,
b_0=1+\tau+\tau(\lambda_1+\lambda_2)/2,\,b_1=1/2,\,b_2=\tau/2\Bigr),\\
X_3 & \triangleq \beta_{2,2}^{-1}\Bigl(\tau, b_0=1+\tau,\,
b_1=(1+\tau+\tau\lambda_1+\tau\lambda_2)/2, \,
b_2=(1+\tau+\tau\lambda_1+\tau\lambda_2)/2\Bigr).
\end{align}
\begin{theorem}[Structure of the Selberg integral distribution]\label{B2} Let $\tau=2/\mu.$ $M_{(\mu, \lambda_1, \lambda_2)}$
decomposes into independent factors,
\begin{equation}\label{Decomposition}
M_{(\mu, \lambda_1, \lambda_2)} \overset{{\rm in \,law}}{=} 2\pi\,
2^{-\bigl[3(1+\tau)+2\tau(\lambda_1+\lambda_2)\bigr]/\tau}\,\Gamma\bigl(1-1/\tau\bigr)^{-1}\,
L\,X_1\,X_2\,X_3\,Y.
\end{equation}
In particular, $\log M_{(\mu, \lambda_1, \lambda_2)}$ is absolutely continuous and infinitely divisible. Its 
L$\acute{\text{e}}$vy-Khinchine decomposition 
$\log\mathfrak{M}(q\,|\,\mu,\lambda_1,\lambda_2) =
q\,\mathfrak{m}(\mu) + \frac{1}{2} q^2
\sigma^2(\mu)+\int_{\mathbb{R}\setminus \{0\}} \bigl(e^{q u}-1-q
u/(1+u^2)\bigr) d\mathcal{M}_{(\mu, \lambda_1, \lambda_2)}(u)$
is
\begin{gather}
\sigma^2(\mu) = \frac{4\,\log 2}{\tau}, \\
\mathcal{M}_{(\mu, \lambda_1,
\lambda_2)}(u) =-\int\limits_u^\infty 
\Bigl[\frac{\bigl(e^x+e^{-x\tau\lambda_1}+e^{-x\tau\lambda_2}+e^{-x(1+\tau(1+\lambda_1+\lambda_2))}\bigr)}{\bigl(e^x-1\bigr)(e^{x\tau}-1)}
-\frac{e^{-x(1+\tau(1+\lambda_1+\lambda_2))/2}}{\bigl(e^{x/2}-1\bigr)(e^{x\tau/2}-1)}\Bigr]\,
\frac{dx}{x}
\end{gather}
for $u>0,$ and $\mathcal{M}_{(\mu, \lambda_1, \lambda_2)}(u)=0$ for
$u<0,$ and some constant $\mathfrak{m}(\mu)\in\mathbb{R}.$ 
The Stieltjes moment problems for both positive, cf. \eqref{posmomentformula}, and negative, cf. \eqref{negmomentformula}, moments of $M_{(\mu, \lambda_1, \lambda_2)}$ are indeterminate. 
\end{theorem}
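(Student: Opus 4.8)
\medskip

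\noindent\emph{Proof proposal.} The plan is to deduce every assertion from the closed form \eqref{Mfunc}, which by Theorem~\ref{BSM} is the Mellin transform of $M_{(\mu,\lambda_1,\lambda_2)}$ on $\Re(q)<\tau$, together with the building blocks supplied by Theorem~\ref{eta22} and the elementary laws $L$ and $Y$. Since a probability law on $(0,\infty)$ is determined by its Mellin transform on a vertical strip, to prove the decomposition \eqref{Decomposition} it suffices to show that the Mellin transform of its right-hand side equals \eqref{Mfunc} for $\Re(q)<\tau$. By independence this transform factors as $c^{\,q}\,\mathbf E[L^q]\,\mathbf E[X_1^q]\mathbf E[X_2^q]\mathbf E[X_3^q]\,\mathbf E[Y^q]$, where $c$ is the constant prefactor in \eqref{Decomposition}, $\mathbf E[L^q]=2^{\,2q^2/\tau}$ (Gaussian moment generating function with variance $4\log 2/\tau$), $\mathbf E[Y^q]=\Gamma(1-q/\tau)$, and $\mathbf E[X_i^q]=\eta(-q\,|\,\tau,b^{(i)})$ by Theorem~\ref{eta22} with $q\mapsto-q$ (legitimate since each strip $\Re(q)<b_0^{(i)}$ contains $\Re(q)<\tau$, as $b_0^{(i)}\ge 1+\tau$).

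\noindent Substituting the three parameter vectors, one exploits the arithmetic relations $b_0^{(1)}+b_1^{(1)}=b_0^{(1)}+b_2^{(1)}=b_0^{(2)}$, $\;b_0^{(1)}+b_1^{(1)}+b_2^{(1)}=1+\tau(1+\lambda_2)$, $\;b_0^{(2)}=1+\tau+\tau(\lambda_1+\lambda_2)/2$, $\;b_0^{(2)}+b_1^{(2)}+b_2^{(2)}=b_0^{(3)}+b_1^{(3)}=b_0^{(3)}+b_2^{(3)}$, $\;b_0^{(3)}=1+\tau$ to telescope the twelve $\Gamma_2$-ratios of $\prod_i\eta(-q\,|\,\tau,b^{(i)})$: two ratios of $X_1$ reproduce the first two $\Gamma_2$-ratios of \eqref{Mfunc} verbatim; several ratios cancel among $X_1,X_2,X_3$; the first ratio of $X_3$ combines with $\mathbf E[Y^q]=\Gamma(1-q/\tau)$, via the functional equation $\Gamma_2(z\,|\,\tau)/\Gamma_2(z+1\,|\,\tau)=\tau^{z/\tau-1/2}\Gamma(z/\tau)/\sqrt{2\pi}$ at $z=\tau-q$, to account for both the prefactor $\tau^{q/\tau}$ and the third $\Gamma_2$-ratio of \eqref{Mfunc}; and the leftover $\Gamma_2$-factors, which sit in the denominator at $-q+b_0^{(2)}$, $-q+b_0^{(2)}+\tfrac12$, $-q+b_0^{(2)}+\tfrac\tau2$, $-q+b_0^{(2)}+\tfrac{1+\tau}2$ together with the numerator factor at $-q+2+\tau(2+\lambda_1+\lambda_2)=-q+2b_0^{(2)}$, collapse — via the multiplication (duplication) formula for $\Gamma_2$, cf.\ \cite{MeIMRN}, which identifies $\Gamma_2(w\,|\,\tau)\Gamma_2(w+\tfrac12\,|\,\tau)\Gamma_2(w+\tfrac\tau2\,|\,\tau)\Gamma_2(w+\tfrac{1+\tau}2\,|\,\tau)$ with $\Gamma_2(2w\,|\,\tau)$ up to an elementary factor — into the last $\Gamma_2$-ratio of \eqref{Mfunc}. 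The elementary normalization produced by that multiplication formula is precisely what absorbs $\mathbf E[L^q]=2^{2q^2/\tau}$, the power of $2$ and the factor $\Gamma^{-q}(1-1/\tau)$ inside $c^{\,q}$, and $(2\pi)^q$. I expect this last point — matching the quadratic-in-$q$ normalizations, i.e.\ tracking how the Gaussian component of $M_{(\mu,\lambda_1,\lambda_2)}$ is split between the lognormal factor $L$ and the $\Gamma_2$-ratios whose arguments differ by $q$ rather than by $1$ or $\tau$ — to be the one genuinely delicate step; the rest is bookkeeping. (Alternatively, one checks that the right-hand side of \eqref{Decomposition} satisfies the functional equations \eqref{funceqM1}--\eqref{funceqMtau} and the $\mu\to0$ expansion \eqref{asymptoticgeneral}, which by \cite{MeIMRN},\cite{Me14} pin down the law.)

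\noindent Absolute continuity, infinite divisibility, and the L\'evy--Khinchine triple then follow softly from \eqref{Decomposition}: $\log M_{(\mu,\lambda_1,\lambda_2)}$ is a sum of independent summands — a constant, $\log L=\mathcal N(0,4\log2/\tau)$, the three $\log X_i=-\log\beta_{2,2}(\tau,b^{(i)})$, and $\log Y$ — each infinitely divisible ($\log L$ trivially, $\log X_i$ by Theorem~\ref{eta22}, $\log Y$ by the stated infinite divisibility of the logarithm of a Fr\'echet variable), so the sum is infinitely divisible; and it is absolutely continuous because $\log L$ has a bounded density and convolution preserves absolute continuity. The L\'evy triple of the sum is the sum of the triples: only $\log L$ carries a Gaussian part, whence $\sigma^2(\mu)=4\log2/\tau$; the L\'evy measure is obtained by adding the density $x^{-1}(e^{\tau x}-1)^{-1}$ of $\log Y$ (read off from the classical integral representation of $\log\Gamma$ applied to $\log\Gamma(1-q/\tau)$, then substituting $x\mapsto\tau x$) to the densities $x^{-1}e^{-b_0^{(i)}x}(1-e^{-b_1^{(i)}x})(1-e^{-b_2^{(i)}x})\big/\bigl((1-e^{-x})(1-e^{-\tau x})\bigr)$ of the $\log X_i$ given by \eqref{LKH} with $q\mapsto-q$; expanding the numerators, clearing the common denominator $(e^x-1)(e^{\tau x}-1)$, and using $(1-e^{-x})(1-e^{-\tau x})=(1-e^{-x/2})(1+e^{-x/2})(1-e^{-\tau x/2})(1+e^{-\tau x/2})$ to fold the half-argument terms of $X_2$ into the $(e^{x/2}-1)(e^{\tau x/2}-1)^{-1}$ contribution yields the integrand defining $\mathcal M_{(\mu,\lambda_1,\lambda_2)}$, the finite discrepancy between the $\int(e^{qu}-1)\,d\nu$ normalization of \eqref{LKH} and the $\int(e^{qu}-1-qu/(1+u^2))\,d\mathcal M$ normalization being a drift absorbed into $\mathfrak m(\mu)$. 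Setting $\lambda_1=\lambda_2=0$ (so $X_1\equiv1$) recovers Corollary~\ref{noncentral}.

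\noindent Finally, for the moment problems: the sequence \eqref{negmomentformula} is a product of $n$ values of $\Gamma$ at arguments linear in $n$, so $\log\mathfrak M(-n\,|\,\mu,\lambda_1,\lambda_2)$ grows like $c\,n^2\log n$; hence $\sum_n\mathfrak M(-n\,|\,\mu,\lambda_1,\lambda_2)^{-1/(2n)}<\infty$, Carleman's criterion fails, and the Stieltjes moment problem for $M_{(\mu,\lambda_1,\lambda_2)}^{-1}$ is indeterminate. Equivalently, the nonzero Gaussian component $\sigma^2(\mu)>0$ makes the density of $M_{(\mu,\lambda_1,\lambda_2)}^{-1}$ of lognormal type at infinity, so Krein's logarithmic-integrability condition $\int_{\mathbb R}(1+x^2)^{-1}\log\bigl(1/f(x)\bigr)\,dx<\infty$ holds and again forces indeterminacy; the positive-moment case follows analogously from \eqref{posmomentformula} together with the lognormal-type bulk of $M_{(\mu,\lambda_1,\lambda_2)}$ (equivalently, its heavy power-law right tail of index $\tau$ already precludes moment-determinacy).
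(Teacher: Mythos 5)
Theorem~\ref{B2} is stated here as a review item: the paper itself gives no proof and defers to \cite{MeIMRN}, \cite{Me13}, \cite{Me14}, and your reconstruction follows the same strategy as those sources — match the Mellin transform \eqref{Mfunc} with the product $c^q\,\mathbf{E}[L^q]\prod_i\eta(-q\,|\,\tau,b^{(i)})\,\Gamma(1-q/\tau)$ using the shift equation and the Gauss multiplication formula for $\Gamma_2$, then add the L\'evy--Khinchine data of the independent log-factors, and finally argue indeterminacy from the lognormal factor. Your telescoping identities are correct, and the step you flag as delicate does close: applying the multiplication formula to the four leftover denominator factors at $-q+b_0^{(2)}+\{0,\tfrac12,\tfrac\tau2,\tfrac{1+\tau}2\}$ (and to the matching constant numerators) produces $\Gamma_2\bigl(2b_0^{(2)}-q\,|\,\tau\bigr)/\Gamma_2\bigl(2b_0^{(2)}-2q\,|\,\tau\bigr)$, i.e.\ the fourth ratio of \eqref{Mfunc}, times the elementary factor $2^{-2q^2/\tau+q[3(1+\tau)+2\tau(\lambda_1+\lambda_2)]/\tau}$ (computed from $\zeta_2(0,\cdot\,|\,\tau)$, a quadratic polynomial); its quadratic part cancels $\mathbf{E}[L^q]=2^{2q^2/\tau}$ and its linear part cancels the power of two inside $c^q$, while $(2\pi)^q\Gamma^{-q}(1-1/\tau)$ matches the rest, so the constant in \eqref{Decomposition} comes out exactly. (Minor point: for $-1/\tau<\lambda_i<0$ one only has $b_0^{(i)}>\tau$, not $b_0^{(i)}\ge 1+\tau$, which still suffices for the strip $\Re(q)<\tau$.) The summation of L\'evy measures, the identification $\sigma^2(\mu)=4\log 2/\tau$ from the single Gaussian summand, and the absorption of the compensator discrepancy into $\mathfrak{m}(\mu)$ are all as in the source proofs.

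One step needs repair: the clause ``Carleman's criterion fails, and the Stieltjes moment problem \ldots is indeterminate'' is a non sequitur — Carleman's condition is sufficient for determinacy, not necessary, so its failure proves nothing — and the word ``equivalently'' before the Krein argument is therefore inaccurate. The Krein-type argument is the one that actually does the work: the independent lognormal factor gives the density of $\log M_{(\mu,\lambda_1,\lambda_2)}$ (hence of the log of its reciprocal) a Gaussian-convolution lower bound, so $-\log$ of the density grows at most quadratically, Krein's logarithmic integral converges, and indeterminacy follows for the negative-moment problem; for the positive moments the statement is in any case weaker, since only moments of order $<\tau$ are finite. With the Carleman sentence deleted and the Krein argument promoted to the actual proof (as in the cited references), the proposal is sound.
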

We finally note that the Mellin transform of the Selberg integral distribution has a remarkable factorization, which extends Selberg's finite product of gamma factors to an infinite product.
\begin{theorem}[Factorization of the Mellin transform]
\begin{align}
\mathfrak{M}(q\,|\,\mu,\lambda_1,\lambda_2) = &
\frac{\tau^{q}\Gamma\bigl(1-q/\tau\bigr)\Gamma\bigl(2-2q+\tau(1+\lambda_1+\lambda_2)\bigr)}{
\Gamma^{q}\bigl(1-1/\tau\bigr)\Gamma\bigl(2-q+\tau(1+\lambda_1+\lambda_2)\bigr)}
\prod\limits_{m=1}^\infty \left(m\tau\right)^{2q}
\frac{\Gamma\bigl(1-q+m\tau\bigr)}{\Gamma\bigl(1+m\tau\bigr)}
 \star \nonumber \\
\star &
\frac{\Gamma\bigl(1-q+\tau\lambda_1+m\tau\bigr)}{\Gamma\bigl(1+\tau\lambda_1+m\tau\bigr)}
\frac{\Gamma\bigl(1-q+\tau\lambda_2+m\tau\bigr)}{\Gamma\bigl(1+\tau\lambda_2+m\tau\bigr)}
\frac{\Gamma\bigl(2-q+\tau(\lambda_1+\lambda_2)+m\tau\bigr)}{\Gamma\bigl(2-2q+\tau(\lambda_1+\lambda_2)+m\tau\bigr)}.
\label{ShintaniFactor}
\end{align}
\end{theorem}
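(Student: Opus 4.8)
The plan is to obtain \eqref{ShintaniFactor} directly from the definition \eqref{Mfunc} by collapsing all eight double gamma factors into Euler gamma functions by iterating the two functional equations of $\Gamma_2$ recorded above. The key observation is that the period-$\tau$ equation $\Gamma_2(z\,|\,\tau)=(1/\sqrt{2\pi})\,\Gamma(z)\,\Gamma_2(z+\tau\,|\,\tau)$ iterates to $\Gamma_2(z\,|\,\tau)=(2\pi)^{-N/2}\bigl(\prod_{k=0}^{N-1}\Gamma(z+k\tau)\bigr)\Gamma_2(z+N\tau\,|\,\tau)$ for all $N\geq 1$, so that each $\Gamma_2$ factor turns, in the limit $N\to\infty$, into a product of Euler gammas with arguments shifted by $m\tau$, $m\geq 1$, times a residual double gamma at level $N$. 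Since the factors $\Gamma(1-q/\tau)$ and $\tau^{q}$ in \eqref{ShintaniFactor} carry the argument $q/\tau$ rather than $q$, they cannot arise from the period-$\tau$ equation alone; the first step is therefore to apply the period-$1$ equation $\Gamma_2(z\,|\,\tau)/\Gamma_2(z+1\,|\,\tau)=\tau^{z/\tau-1/2}\Gamma(z/\tau)/\sqrt{2\pi}$ once to the pair $\Gamma_2(-q+\tau\,|\,\tau)/\Gamma_2(\tau\,|\,\tau)$ occurring in \eqref{Mfunc}, which rewrites it as $\tau^{-q/\tau}\Gamma(1-q/\tau)\,\Gamma_2(1-q+\tau\,|\,\tau)/\Gamma_2(1+\tau\,|\,\tau)$; after this all eight double gammas have arguments of the form ``$1+\cdots$'' or ``$2+\cdots$'' that telescope uniformly.

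Applying the period-$\tau$ iteration to the resulting eight factors and reindexing by $m=k+1$ gives, in the numerator, the products $\prod_{m\geq 1}\Gamma(1-q+\tau\lambda_1+m\tau)$, $\prod_{m\geq1}\Gamma(1-q+\tau\lambda_2+m\tau)$, $\prod_{m\geq1}\Gamma(1-q+m\tau)$ (from the new factor $\Gamma_2(1-q+\tau\,|\,\tau)$), and $\prod_{m\geq2}\Gamma(2-q+\tau(\lambda_1+\lambda_2)+m\tau)$, and in the denominator the same products with $q$ removed from the first three and $q$ replaced by $2q$ in the last; the $(2\pi)^{\pm N/2}$ cancel in pairs because there are four double gammas upstairs and four downstairs. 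The shift $m\geq 2$ in the last product is absorbed by peeling off its $m=1$ term $\Gamma(2-q+\tau(1+\lambda_1+\lambda_2))$ into the elementary prefactor, and likewise for its denominator partner, producing the ratio $\Gamma(2-2q+\tau(1+\lambda_1+\lambda_2))/\Gamma(2-q+\tau(1+\lambda_1+\lambda_2))$ of \eqref{ShintaniFactor}. Thus the ``product part'' of \eqref{ShintaniFactor} is reproduced term by term \emph{except} for the regularizing factor $(m\tau)^{2q}$ and the constant $\tau^{q}$; these, together with the $\Gamma^{-q}(1-1/\tau)$ and $(2\pi)^{q}$ of \eqref{Mfunc}, must be supplied by the limit of the residual product $R_N\triangleq\prod_{\mathrm{num}}\Gamma_2(z_i+N\tau\,|\,\tau)/\prod_{\mathrm{den}}\Gamma_2(w_j+N\tau\,|\,\tau)$.

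The substantive point — and the main obstacle — is convergence. Because the total imbalance of the eight shifted arguments is exactly $-2q$ (the $\lambda_i$ and constant shifts cancel between numerator and denominator, leaving $-q-q-q-q+2q$), Stirling's formula shows that the product part at index $m$ is $(m\tau)^{-2q}\bigl(1+O(m^{-2})\bigr)$, so that it diverges while $(m\tau)^{2q}$ times it converges absolutely; hence $R_N$ must grow like $\prod_{m=1}^{N}(m\tau)^{2q}=\tau^{2qN}(N!)^{2q}$. To verify this and to extract the finite remainder one invokes the asymptotic expansion of $\log\Gamma_2(w\,|\,\tau)$ as $w\to+\infty$: with four factors on each side the leading $-w^{2}\log w/(2\tau)$ terms cancel, while the subleading terms, which are linear in the arguments and hence see the imbalance $-2q$, produce exactly $\log R_N = 2qN\log(N\tau)-2qN+q\log N+O(1)$, matching $\log\bigl(\tau^{2qN}(N!)^{2q}\bigr)$ up to a finite constant; that constant, together with the $\tau$- and $2\pi$-powers collected along the way and the $\tau^{-q/\tau}\Gamma(1-q/\tau)$ produced by the period-$1$ step, assembles into the elementary prefactor $\tau^{q}\Gamma(1-q/\tau)\Gamma(2-2q+\tau(1+\lambda_1+\lambda_2))\bigl[\Gamma^{q}(1-1/\tau)\Gamma(2-q+\tau(1+\lambda_1+\lambda_2))\bigr]^{-1}$. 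Getting this sub-leading matching exactly right, rather than merely up to an unidentified constant, is where the real work lies. Finally, as an independent consistency check one verifies that both sides of \eqref{ShintaniFactor} are meromorphic in $q$ with the same poles — those of the numerator $\Gamma_2$'s at $q\in\{1+\tau(1+\lambda_i)+k_1+k_2\tau\}$, of $\Gamma_2(-q+\tau\,|\,\tau)$ at $q\in\{k_1+m\tau\}$, and of $\Gamma_2(2-q+\tau(2+\lambda_1+\lambda_2)\,|\,\tau)$, matching term by term those of the gammas on the right — equal $1$ at $q=0$, reduce for positive integers $q=n<\tau$ to Selberg's finite product \eqref{posmomentformula}, and satisfy the shift relation \eqref{funceqM1}.
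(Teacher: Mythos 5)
Your reduction strategy is sound, and for what it is worth the paper itself offers no proof of this theorem (it is quoted from \cite{MeIMRN}, \cite{Me14}), so there is no in-paper argument to compare against; your route — one application of the period-$1$ equation to $\Gamma_2(-q+\tau\,|\,\tau)/\Gamma_2(\tau\,|\,\tau)$ to generate $\tau^{-q/\tau}\Gamma(1-q/\tau)$, then telescoping the period-$\tau$ equation in each of the four ratios, reindexing by $m=k+1$, peeling the $m=1$ term of the fourth ratio into the prefactor, and checking that the $1/m$ corrections cancel so that the $(m\tau)^{2q}$-regularized product converges absolutely — is the natural one, and all of those steps are correct (the per-$m$ exponent is indeed $-q-q-q+q=-2q$, and the $1/m$ coefficients do sum to zero).

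The genuine gap is the one you flag yourself and then do not close: the identification of the residual limit. Your telescoping yields $\mathfrak{M}(q)=(2\pi)^{q}\Gamma^{-q}(1-1/\tau)\,\Gamma(1-q/\tau)\,\frac{\Gamma(2-2q+\tau(1+\lambda_1+\lambda_2))}{\Gamma(2-q+\tau(1+\lambda_1+\lambda_2))}\,P(q)\,\Lambda(q)$, where $P(q)$ is the convergent product in \eqref{ShintaniFactor} and $\Lambda(q)=\lim_N\bigl[\prod_{m\le N}(m\tau)^{-2q}\bigr]\cdot(\text{boundary gamma ratio})\cdot R_N$; the theorem is exactly the statement $\Lambda(q)=(\tau/2\pi)^{q}$, i.e. the disappearance of $(2\pi)^{q}$ and the appearance of $\tau^{q}$. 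Matching $\log R_N$ to $2qN\log(N\tau)-2qN+q\log N+O(1)$ only proves \eqref{ShintaniFactor} up to an undetermined factor of the form $e^{\alpha+\beta q}$, and the "independent consistency checks" you list do not, as stated, remove it. To close the gap you must either import the full large-$w$ expansion of $\log\Gamma_2(w\,|\,\tau)$ (through the $w\log w$, $w$, $\log w$ and constant terms, e.g. from Ruijsenaars' treatment cited in the paper; the modular constants cancel because there are four balanced arguments on each side) and carry the bookkeeping to the constant, or argue more cheaply: the cancellation of the quadratic-in-shift ($\psi_2'$-type) terms, which you already observed, together with $\psi_2''(w)\to 0$, shows that $\log\Lambda(q)$ is affine in $q$ and, after checking that the $\lambda$-derivatives of its linear coefficient vanish in the limit, independent of $\lambda_1,\lambda_2$; then $q=0$ gives $\Lambda(0)=1$ and $q=1$, where $P(1)=\prod_m \frac{m(m+\lambda_1+\lambda_2)}{(m+\lambda_1)(m+\lambda_2)}=\frac{\Gamma(1+\lambda_1)\Gamma(1+\lambda_2)}{\Gamma(1+\lambda_1+\lambda_2)}$ and $\mathfrak{M}(1)$ is the $n=1$ Selberg moment \eqref{posmomentformula}, forces $\Lambda(1)=\tau/2\pi$, hence $\Lambda(q)=(\tau/2\pi)^{q}$. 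Without one of these closing arguments the prefactor of \eqref{ShintaniFactor} — the only nontrivial constant in the formula — remains unproved.
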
 

We conjectured in \cite{Me4} in the special case of $\lambda_1=\lambda_2=0$ and in \cite{MeIMRN}
in general that the Selberg integral distribution coincides with the law of the generalized total mass of the limit lognormal measure.
\begin{conjecture}\label{myresult}
Let $\mu\in(0, 2)$ and $\lambda_i>-\mu/2,$ then
\begin{equation}
\int_0^1 s^{\lambda_1} (1-s)^{\lambda_2} 
\,M_\mu(ds) \overset{{\rm in \,law}}{=} 
M_{(\mu, \lambda_1, \lambda_2)}. 
\end{equation}
\end{conjecture}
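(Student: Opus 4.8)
Write $X_\mu\triangleq\int_0^1 s^{\lambda_1}(1-s)^{\lambda_2}\,M_\mu(ds)$. Since $X_\mu$ is positive with ${\bf E}[X_\mu^q]<\infty$ for all real $q<\tau=2/\mu$ and for all negative real $q$, the moment generating function of $\log X_\mu$ is finite in a neighbourhood of the origin, so $\log X_\mu$ is moment-determinate and the law of $X_\mu$ is pinned down by the values ${\bf E}[X_\mu^q]$ for real $q$ near $0$. Conjecture \ref{myresult} is therefore equivalent to the identity of Mellin transforms
\[
{\bf E}[X_\mu^q]=\mathfrak{M}(q\,|\,\mu,\lambda_1,\lambda_2),\qquad \Re(q)<\tau,
\]
(which on the whole strip follows from the real-interval case by analytic continuation). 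By Selberg's formula \eqref{selbergformula} and \eqref{posmomentformula} the two sides already agree at every positive integer $q<\tau$, but by Theorem \ref{B2} the integer moments alone do not determine the law, so the entire content of the conjecture is carried by the non-integer values.

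The plan is to characterise both sides through the intermittency differentiation rule. Applying Theorem \ref{Differentiation} with $\varphi(s)=s^{\lambda_1}(1-s)^{\lambda_2}$, $f\equiv 0$ and $F(y)=y^q$, the first term of \eqref{IntermDiff} drops out because it carries the factor $f(s)\equiv 0$, and, writing $\Phi_\mu^{(s_1,s_2)}(q)\triangleq{\bf E}\bigl[\bigl(\int_0^1 \varphi(u)\,|u-s_1|^{-\mu}|u-s_2|^{-\mu}\,M_\mu(du)\bigr)^q\bigr]$, one is left with
\[
\partial_\mu\,{\bf E}[X_\mu^q]=\frac{q(q-1)}{2}\int_{[0,1]^2}\Phi_\mu^{(s_1,s_2)}(q-2)\,\bigl(-\log|s_1-s_2|\bigr)\,|s_1-s_2|^{-\mu}\,\varphi(s_1)\,\varphi(s_2)\,ds_1\,ds_2 .
\]
Each weighted moment $\Phi_\mu^{(s_1,s_2)}$ has positive integer moments given by generalised Selberg integrals, so these functionals belong to the same family as $X_\mu$, and \eqref{IntermDiff} closes into a hierarchy in which differentiation in $\mu$ raises the number of logarithmic insertions by two and lowers the Mellin exponent by two, subject to the deterministic initial datum ${\bf E}[X_0^q]=x^q$ at $\mu=0$ (recall $M_0$ is Lebesgue measure and $x=\Gamma(1+\lambda_1)\Gamma(1+\lambda_2)/\Gamma(2+\lambda_1+\lambda_2)$). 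The first part of the work is to set this hierarchy up rigorously --- in a suitable space of functions of $q$ of controlled growth --- and to prove that it is well posed on the relevant range of $\mu$.

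The second part is to verify that the meromorphic function $\mathfrak{M}(\,\cdot\,|\,\mu,\lambda_1,\lambda_2)$ of \eqref{Mfunc}, together with the analogous functions obtained by attaching $|\,\cdot\,-s_k|^{-\mu}$ insertions --- which, by the moment formulas in Theorem \ref{BSM}, amounts to shifting $\lambda_1,\lambda_2$ and multiplying by Barnes beta factors as in Theorems \ref{eta22} and \ref{B2} --- solves exactly the same hierarchy with the same initial data. This reduces to a computation in the calculus of the double gamma function: one differentiates the $\Gamma_2$-product \eqref{Mfunc} in $\mu$ (equivalently in $\tau$) and reduces the resulting identity to the functional equations \eqref{funceqM1}--\eqref{funceqMtau} and the Barnes beta factorisations of Theorem \ref{B2}. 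Order by order in $\mu$ the two sides are already known to coincide, since that is precisely what the intermittency expansion \eqref{Expansion}, combined with the asymptotic identity \eqref{fullasymptseries} $=$ \eqref{asymptoticgeneral} of Theorem \ref{BSM}, asserts; what remains is the analytic upgrade of that formal agreement. Granting the two parts, uniqueness for the hierarchy forces ${\bf E}[X_\mu^q]\equiv\mathfrak{M}(q\,|\,\mu,\lambda_1,\lambda_2)$, which is the conjecture.

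The hard part is exactly this analytic upgrade, i.e.\ uniqueness of the hierarchy at \emph{finite} $\mu$ rather than only as a formal power series. The series in \eqref{fullasymptseries} is divergent, so matching its coefficients --- which is what comes for free --- does not identify the two functions; genuine a priori estimates are required, and they are delicate because \eqref{IntermDiff} is nonlocal (each level couples to the next, so the hierarchy never truncates at a fixed Mellin exponent) and because the inserted weights $|u-s_k|^{-\mu}$ develop non-integrable singularities once $\mu\ge1$, forcing a careful regularisation of the iterated integrals. A promising route that sidesteps the hierarchy is to prove directly that ${\bf E}[X_\mu^q]$ satisfies the period-one functional equation \eqref{funceqM1}, by applying the exact stochastic self-similarity \eqref{multifractallaw} to a dyadic splitting of the unit interval and iterating --- in the spirit of the recursive ``star equation'' arguments used to pin down the total mass of Gaussian multiplicative chaos on the circle --- and then to invoke near-uniqueness of meromorphic solutions of a period-one shift equation of controlled growth, completing the identification with the help of the second functional equation \eqref{funceqMtau}. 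In every approach some analytic ingredient beyond the moment data --- not contained in the material reviewed above --- appears indispensable.
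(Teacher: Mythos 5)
Your proposal is not a proof, and it cannot be compared against one: the statement you were asked to prove is stated in the paper as Conjecture \ref{myresult}, and the paper itself offers no proof --- only the supporting evidence that (1) the positive integer moments of $\int_0^1 s^{\lambda_1}(1-s)^{\lambda_2}M_\mu(ds)$ match \eqref{posmomentformula} via Selberg's formula \eqref{selbergformula}, and (2) the formal intermittency expansion \eqref{fullasymptseries} coincides with the asymptotic expansion \eqref{asymptoticgeneral} of $\mathfrak{M}$. The parts of your argument that are solid are exactly the reductions: since ${\bf E}[X_\mu^q]$ is finite on a real neighbourhood of $0$, the law of $X_\mu$ is determined by its Mellin transform there, so the conjecture is equivalent to ${\bf E}[X_\mu^q]=\mathfrak{M}(q\,|\,\mu,\lambda_1,\lambda_2)$; and you are right that, by the moment indeterminacy in Theorem \ref{B2}, the integer moments alone settle nothing. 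These observations sharpen the paper's stated rationale but do not go beyond it in substance.

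The genuine gap is the one you yourself flag, and it is not a technicality that a more careful write-up would fill. Your hierarchy route requires a uniqueness theorem for the intermittency system \eqref{IntermDiff} at finite $\mu$; but the only control you have on solutions is agreement of the coefficients of a divergent asymptotic series in $\mu$ (the paper is explicit that \eqref{fullasymptseries} is generally divergent), and two distinct functions can share such an expansion, so "uniqueness of the hierarchy" is precisely the missing ingredient, not a consequence of the material you cite. Moreover the hierarchy never closes (each $\mu$-derivative introduces new insertions $|u-s_k|^{-\mu}$ and lowers the Mellin exponent), and for $\mu\geq 1$ the inserted weights are non-integrable, so even well-posedness is unproven. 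Your alternative route has the same character: \eqref{multifractallaw} is an equality in law at a \emph{fixed} scale only (the paper's footnote stresses that $\Omega_\gamma$ is not a process and that the identity says nothing about joint laws), so a dyadic-splitting derivation of the shift equation \eqref{funceqM1} for the true Mellin transform needs joint-distribution input that the reviewed results do not supply; and even granted \eqref{funceqM1}, a meromorphic solution of a period-one equation is determined only up to a multiplicative periodic function, so you would additionally need growth or analyticity control on ${\bf E}[X_\mu^q]$ in a complex strip (or an independently derived second equation of period $\tau$ such as \eqref{funceqMtau}) --- none of which is established for the total mass itself. In short, your plan is a reasonable research programme consistent with how the paper motivates the conjecture, but the decisive analytic steps are absent, and the statement remains exactly what the paper calls it: a conjecture.
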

The rationale for this conjecture is that we constructed a family of probability distributions parameterized by $\mu,$ $\lambda_1,$ and $\lambda_2$ having the properties that (1) its moments match the moments of the (generalized) total mass $\int_0^1 s^{\lambda_1} (1-s)^{\lambda_2} \,M_\mu(ds),$ \emph{i.e.} the Selberg integral and (2) the asymptotic expansion of its Mellin transform
coincides with the intermittency expansion of the total mass. It is finally worth pointing out that the restriction $\lambda_i>-\mu/2$ is artificial and
only imposed so that the Mellin transform can be defined over $\Re(q)<2/\mu.$
It is not difficult to see from \eqref{multifractallaw} that 
\begin{equation}\label{lawlargenumbers}
\frac{\log M_\mu(0,\,t)}{\log t} \rightarrow 1+\frac{\mu}{2}, \,\,\text{a.s.}
\end{equation}
in the limit $t\rightarrow 0$  
so that the integral $\int_{0}^1 s^{\lambda_1} (1-s)^{\lambda_2}\,M_\mu(ds)$ is actually defined for $\lambda_i+\mu/2>-1$ and its Mellin transform for
$\Re(q)<\min\{2/\mu,\,1+(1+\lambda_i)2/\mu\}.$

\section{Calculations}
\noindent In this section we will give heuristic derivations of Conjectures
\ref{Mellinvar} and \ref{Mellin} and their corollaries assuming Conjecture \ref{myresult} to be true. The derivations are based on certain exact calculations, combined with a key assumption in the case of Conjecture
\ref{Mellin} that is explained below.

The main idea of the derivation of Conjecture 
\ref{Mellinvar} is that the $S^{(1)}_t(\mu,u,\varepsilon)$ and $S^{(2)}_t(\mu,u,\varepsilon)$ statistics that we introduced in \eqref{Su} in Section 2 converge in law, up to $O(\varepsilon)$ 
terms, to (modifications of) the processes $\omega_{\mu, \varepsilon}(u)-\omega_{\mu, \varepsilon}(0)$ and
$\omega_{\mu, \varepsilon}(u),$ respectively, in the limit $t\rightarrow \infty,$ where $\omega_{\mu, \varepsilon}(u)$ is the approximation of the gaussian free field that we defined in \eqref{mean}-\eqref{cov2} (recall $L=1$). 
\begin{lemma}[Main lemma for $S^{(1)}_t(\mu,u,\varepsilon)$]\label{main}
Let $f^{(1)}_{\varepsilon, u}(x)$ be as in \eqref{fu}, $\chi^{(1)}_u(x)=\chi_{[0,\,u]}(x),$
\begin{equation}
f^{(1)}_{\varepsilon, u}(x) \triangleq (\chi^{(1)}_u\star\phi_\varepsilon)(x) = \frac{1}{\varepsilon}\int \chi^{(1)}_u(x-y)\phi(y/\varepsilon)\,dy,
\end{equation}
$\kappa$ be as in \eqref{kappa}, and the scalar product be defined as in \eqref{scalar}. Then, given $0<u, v<1,$ in the limit $\varepsilon\rightarrow 0,$
\begin{align}
\langle f^{(1)}_{\varepsilon, u},\,f^{(1)}_{\varepsilon, v}\rangle & = -\frac{1}{2\pi^2} 
\bigl(\log\varepsilon - \kappa + \log|u-v| - \log|u| - \log|v| \bigr)+ O(\varepsilon), \; \text{if $|u-v|\gg \varepsilon$}, \label{covar}
 \\
\langle f^{(1)}_{\varepsilon, u},\,f^{(1)}_{\varepsilon, u}\rangle & =
 -\frac{1}{\pi^2} 
\bigl(\log\varepsilon - \kappa - \log|u| \bigr)+ O(\varepsilon),
 \; \text{if $u=v.$} \label{var}
\end{align}
\end{lemma}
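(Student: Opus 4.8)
The plan is to compute $\langle f^{(1)}_{\varepsilon,u},\,f^{(1)}_{\varepsilon,v}\rangle$ directly from the real-space formula \eqref{scalar}, exploiting the fact that the derivatives of the smoothed indicators were already computed in \eqref{fuprime1}: $f'^{(1)}_{\varepsilon,u}(x) = \phi_\varepsilon(x) - \phi_\varepsilon(x-u)$. Substituting into
\begin{equation*}
\langle f^{(1)}_{\varepsilon,u},\,f^{(1)}_{\varepsilon,v}\rangle = -\frac{1}{2\pi^2}\int f'^{(1)}_{\varepsilon,u}(x)\,f'^{(1)}_{\varepsilon,v}(y)\,\log|x-y|\,dx\,dy
\end{equation*}
and expanding the product of the two differences of bump functions yields four terms, each of the form $-\frac{1}{2\pi^2}\iint \phi_\varepsilon(x-a)\phi_\varepsilon(y-b)\log|x-y|\,dx\,dy$ with $(a,b)$ ranging over $\{0,u\}\times\{0,v\}$ (for the $u=v$ case, over $\{0,u\}\times\{0,u\}$). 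So the whole computation reduces to understanding a single primitive quantity: the mollified logarithmic kernel
\begin{equation*}
\Psi_\varepsilon(c) \triangleq -\iint \phi_\varepsilon(x)\phi_\varepsilon(y)\log|x-y-c|\,dx\,dy, \qquad c \in \mathbb{R}.
\end{equation*}

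The key analytic step is the asymptotic evaluation of $\Psi_\varepsilon(c)$ as $\varepsilon\to 0$. Changing variables $x=\varepsilon x'$, $y=\varepsilon y'$ gives $\Psi_\varepsilon(c) = -\iint \phi(x')\phi(y')\bigl(\log\varepsilon + \log|x'-y'-c/\varepsilon|\bigr)\,dx'\,dy'$. Using $\int\phi=1$, the $\log\varepsilon$ piece contributes exactly $-\log\varepsilon$. For the remainder there are two regimes. When $c=0$: the rescaled term is $-\iint\phi(x')\phi(y')\log|x'-y'|\,dx'\,dy' = \kappa$ by the definition \eqref{kappa}, so $\Psi_\varepsilon(0) = -\log\varepsilon + \kappa + o(1)$; in fact the $o(1)$ is identically zero here. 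When $|c| \gg \varepsilon$: since $\phi$ is supported on $(-1/2,1/2)$, the argument $x'-y'-c/\varepsilon$ stays bounded away from $0$ (indeed $|x'-y'|<1 \ll |c|/\varepsilon$), so $\log|x'-y'-c/\varepsilon| = \log|c/\varepsilon| + \log|1 - \varepsilon(x'-y')/c|$, and a Taylor expansion of the second logarithm together with $\int\phi = 1$, $\int x'\phi(x')\,dx'$ being finite gives $\Psi_\varepsilon(c) = -\log\varepsilon - \log|c/\varepsilon| + O(\varepsilon/|c|) = -\log|c| + O(\varepsilon/|c|)$. (One should note the $\int x'\phi\,dx'$ term produces the $O(\varepsilon/c)$ correction regardless of whether $\phi$ is even.)

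Assembling the four terms then finishes the proof. For $|u-v| \gg \varepsilon$ (which forces $|u|, |v| \gg \varepsilon$ as well, since $u,v$ are comparable to $|u-v|$ up to the ordering), the two ``diagonal'' pairs $(0,0)$ and $(u,v)$ contribute $\Psi_\varepsilon(0) + \Psi_\varepsilon(u-v)$ while the two ``cross'' pairs $(0,v)$ and $(u,0)$ contribute $-\Psi_\varepsilon(v) - \Psi_\varepsilon(u)$; combining $-\frac{1}{2\pi^2}$ times $\bigl[(-\log\varepsilon+\kappa) + (-\log|u-v|) - (-\log|v|) - (-\log|u|)\bigr]$ gives exactly \eqref{covar}. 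For $u=v$, all four pairs collapse: two give $\Psi_\varepsilon(0)$ and two give $-\Psi_\varepsilon(u)$ (the $\pm c/\varepsilon$ sign is immaterial by symmetry of $\phi$... more precisely because the kernel depends on $|x-y-c|$ and we relabel), so we get $-\frac{1}{2\pi^2}\cdot 2\bigl[(-\log\varepsilon+\kappa) + \log|u|\bigr] = -\frac{1}{\pi^2}(\log\varepsilon - \kappa - \log|u|)$, which is \eqref{var}.

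The main obstacle I anticipate is \emph{bookkeeping the error terms uniformly}, specifically making precise the claim that in the regime $|u-v|\gg\varepsilon$ the correction is genuinely $O(\varepsilon)$ and not merely $O(\varepsilon/|u-v|)$ or $O(\varepsilon/\min(|u|,|v|))$ — the statement as written has $O(\varepsilon)$. This requires either restricting attention to $u,v$ in a fixed compact subinterval bounded away from $0$ and $1$ (so that $|u|,|v|,|u-v|$ are all $\gtrsim$ const and the distinction vanishes), or carrying the sharper $O(\varepsilon/\operatorname{dist})$ form through the subsequent arguments; given how the lemma is used (inside expressions with $u,v\in(0,1)$ where the $\log|u|$, $\log|v|$ terms are explicitly displayed as potentially large), I would state and prove the sharper version $O(\varepsilon/\min\{|u|,|v|,|u-v|\})$ and remark that it is $O(\varepsilon)$ on compacts. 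A secondary, purely technical point is justifying the interchange of integration implicit in splitting the double integral into four pieces and in the change of variables — this is immediate since $\phi$ is smooth and compactly supported and $\log|\cdot|$ is locally integrable, so Fubini applies throughout.
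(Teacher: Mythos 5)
Your route is in substance the paper's own: the paper also works from the real-space formula \eqref{scalar} with $f'^{(1)}_{\varepsilon,u}(x)=\phi_\varepsilon(x)-\phi_\varepsilon(x-u)$ and reduces everything to mollified logarithmic integrals, the only difference being that it evaluates $\int\phi_\varepsilon(x)\log|x-z|\,dx$ first (its \eqref{klog}, giving $\log|z|$ when $|z|\gg\varepsilon$ and $\log\varepsilon+\int\phi(x)\log|x-z/\varepsilon|\,dx$ when $|z|<\varepsilon$) and then integrates against the second bump, whereas you package the two integrations symmetrically into $\Psi_\varepsilon(c)$; your asymptotics $\Psi_\varepsilon(0)=-\log\varepsilon+\kappa$ and $\Psi_\varepsilon(c)=-\log|c|+O(\varepsilon/|c|)$ are exactly the same content. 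One bookkeeping slip needs fixing: with your sign convention, $\iint\phi_\varepsilon(x-a)\phi_\varepsilon(y-b)\log|x-y|\,dx\,dy=-\Psi_\varepsilon(a-b)$, so the expanded double integral is $-\Psi_\varepsilon(0)-\Psi_\varepsilon(u-v)+\Psi_\varepsilon(u)+\Psi_\varepsilon(v)$, and only then does the prefactor $-\frac{1}{2\pi^2}$ give \eqref{covar}. As literally written, your ``diagonal $+$, cross $-$'' bracket times $-\frac{1}{2\pi^2}$ yields $+\frac{1}{2\pi^2}\bigl(\log\varepsilon-\kappa+\log|u-v|-\log|u|-\log|v|\bigr)$, i.e.\ the negative of \eqref{covar}, and similarly in the $u=v$ case the displayed quantity $-\frac{1}{2\pi^2}\cdot 2\bigl[(-\log\varepsilon+\kappa)+\log|u|\bigr]$ equals $+\frac{1}{\pi^2}(\log\varepsilon-\kappa-\log|u|)$, not the stated $-\frac{1}{\pi^2}(\log\varepsilon-\kappa-\log|u|)$; the corrected sign also restores positivity of the variance, as it must. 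Two minor remarks: the parenthetical that $|u-v|\gg\varepsilon$ forces $|u|,|v|\gg\varepsilon$ is not literally true (e.g.\ $u=\varepsilon^2$, $v=1/2$), but it is harmless here because $u,v\in(0,1)$ are fixed as $\varepsilon\to0$, which is also why the sharper $O\bigl(\varepsilon/\min\{|u|,|v|,|u-v|\}\bigr)$ error you propose collapses to the stated $O(\varepsilon)$; and if $\phi$ is taken even, the first-moment term vanishes and the one-variable error improves to $O(\varepsilon^2)$, which is the form used in the paper's \eqref{klog}.
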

\begin{proof}
Recall that $f'^{(1)}_{\varepsilon, u}=\phi_\varepsilon(x)-\phi_\varepsilon(x-u),$ where $\phi_\varepsilon(x)\triangleq 1/\varepsilon\phi(x/\varepsilon)$ is the rescaled bump function, cf. \eqref{fuprime1}. Then,
\begin{equation}\label{klog}
\int \phi_\varepsilon(x)\log|x-z| dx =
\begin{cases}
& \log|z| +O(\varepsilon^2), \;\text{if $|z| \gg \varepsilon,$} \\ 
& \log \varepsilon + \int \phi(x)\log\big|x-\frac{z}{\varepsilon}\big|dx, \;\text{if $|z|<\varepsilon$}.
\end{cases}
\end{equation}
It follows 
\begin{equation}\label{kklog}
\int \big(\phi_\varepsilon(x) - \phi_\varepsilon(x-u)\bigr)\log|x-y| dx =
\begin{cases}
& \log|y|-\log|u-y|+O(\varepsilon^2)\;\text{if $|y|,\,|y-u|\gg\varepsilon$}, \\
& \log\varepsilon + \int \phi(x)\log\big|x-\frac{y}{\varepsilon}\big|dx 
-\log|u-y|+O(\varepsilon^2)\;\text{if $|y|<\varepsilon$},\\
& \log|y|-\log\varepsilon - \int \phi(x)\log\big|x-\frac{y-u}{\varepsilon}\big|dx 
+O(\varepsilon^2)
\end{cases}
\end{equation}
if $|y-u|<\varepsilon.$
Now, by the definition of the scalar product,
\begin{align}
\langle f^{(1)}_{\varepsilon, u},\,f^{(1)}_{\varepsilon, v}\rangle & = -\frac{1}{2\pi^2}
\int\bigl(\phi_\varepsilon(y) - \phi_\varepsilon(y-v) \bigr)\bigl( \phi_\varepsilon(x) - \phi_\varepsilon(x-u)   \bigr) \log|x-y|dxdy, \nonumber \\
& =  -\frac{1}{2\pi^2}
\int\phi_\varepsilon(y)\Bigl[ \bigl(\phi_\varepsilon(x) - \phi_\varepsilon(x-u)   \bigr) \log|x-y|dx\Bigr]\,dy \nonumber \\
& + \frac{1}{2\pi^2}
\int\phi_\varepsilon(y)\Bigl[ \bigl(\phi_\varepsilon(x) - \phi_\varepsilon(x-u)   \bigr) \log|x-(y+v)|dx\Bigr]\,dy. 
\end{align}
The resulting integrals are all of the functional form that we treated in \eqref{klog} and \eqref{kklog} so the result follows. \qed
\end{proof}
\begin{corollary}[Covariance structure of $S^{(1)}_t(\mu, u, \varepsilon)$]\label{covstructureS}
Let the statistic $S^{(1)}_t(\mu, u, \varepsilon)$ be as in \eqref{Su}, 
\begin{equation}
S^{(1)}_t(\mu, u, \varepsilon)\triangleq \pi\sqrt{2\mu}\Bigl[\sum\limits_{\gamma} f^{(1)}_{\varepsilon, u}\bigl(\gamma(t)\bigr) - \frac{\log t}{2\pi \lambda(t)} \int f^{(1)}_{\varepsilon, u}(x) dx\Bigr].
\end{equation}
Then, in the limit $t\rightarrow\infty$ the process $u\rightarrow S^{(1)}_t(\mu, u, \varepsilon),$
$u\in (0, 1),$
converges in law to the centered gaussian field $S^{(1)}(\mu, u, \varepsilon)$ having the asymptotic covariance
\begin{align}\label{CovS}
{\bf Cov}\Bigl( S^{(1)}(\mu, u, \varepsilon), \,S^{(1)}(\mu, v, \varepsilon)  \Bigr)
 = &
\begin{cases}
& -\mu 
\bigl(\log\varepsilon - \kappa + \log|u-v| - \log|u| - \log|v| \bigr), \; \text{if $|u-v|\gg \varepsilon$}, 
 \\
 &  -2\mu 
\bigl(\log\varepsilon - \kappa - \log|u| \bigr),
 \; \text{if $u=v,$}
\end{cases} \nonumber \\
 & + O(\varepsilon).
\end{align}
\end{corollary}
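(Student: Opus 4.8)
The plan is to deduce Corollary \ref{covstructureS} directly from the convergence theorem \ref{strong} together with the bilinear-form asymptotics established in Lemma \ref{main}; no new machinery is needed, so the work is essentially the bookkeeping of constants.

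First I would observe that, by \eqref{Su}, the statistic in question is a fixed scalar multiple of a BKR statistic,
\begin{equation}
S^{(1)}_t(\mu, u, \varepsilon) = \pi\sqrt{2\mu}\; S_t\bigl(f^{(1)}_{\varepsilon, u}\bigr),\qquad f^{(1)}_{\varepsilon, u}=\chi^{(1)}_u\star\phi_\varepsilon,
\end{equation}
and that for each fixed $\varepsilon>0$ and $u>\varepsilon$ the function $f^{(1)}_{\varepsilon, u}$ is compactly supported and $\mathcal{C}^\infty$ (its derivative is a difference of two rescaled bump functions, cf. \eqref{fuprime1}), hence belongs to $\mathcal{H}^{1/2}$ and meets the remaining mild hypotheses of \cite{BK}; this was already recorded in Section 2. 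Consequently Theorem \ref{strong} is applicable to any finite family of such functions.

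Next I would fix an arbitrary finite collection $0<u_1<\cdots<u_k<1$ and apply Theorem \ref{strong} to the test functions $\pi\sqrt{2\mu}\,f^{(1)}_{\varepsilon, u_1},\dots,\pi\sqrt{2\mu}\,f^{(1)}_{\varepsilon, u_k}$. Since the scalar product \eqref{scalar} is bilinear, hence homogeneous of degree two under common scaling of its arguments, and since $S_t(f)$ is centered in the $t\to\infty$ limit, this gives convergence in law of $\bigl(S^{(1)}_t(\mu, u_1, \varepsilon),\dots,S^{(1)}_t(\mu, u_k, \varepsilon)\bigr)$ to a centered Gaussian vector with
\begin{equation}
{\bf Cov}\bigl(S^{(1)}(\mu, u_i, \varepsilon),\,S^{(1)}(\mu, u_j, \varepsilon)\bigr) = 2\pi^2\mu\,\langle f^{(1)}_{\varepsilon, u_i},\,f^{(1)}_{\varepsilon, u_j}\rangle.
\end{equation}
Inserting the two cases \eqref{covar} (for $|u_i-u_j|\gg\varepsilon$) and \eqref{var} (for $u_i=u_j$) of Lemma \ref{main} and carrying the factor $2\pi^2\mu$ through cancels the $1/2\pi^2$ and $1/\pi^2$ prefactors and produces precisely the two branches of \eqref{CovS}, with the $O(\varepsilon)$ remainder inherited from Lemma \ref{main}. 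As the points $u_i$ and their number $k$ were arbitrary, all finite-dimensional marginals of $u\mapsto S^{(1)}_t(\mu, u, \varepsilon)$ converge to those of a centered Gaussian field $S^{(1)}(\mu, u, \varepsilon)$ with the asserted covariance; this field is well defined because the kernel $2\pi^2\mu\,\langle f^{(1)}_{\varepsilon, u},f^{(1)}_{\varepsilon, v}\rangle$ is positive semidefinite, being a pointwise limit of covariance matrices.

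The only ingredient going beyond Theorem \ref{strong} is the upgrade from finite-dimensional convergence to convergence of the \emph{process} $u\mapsto S^{(1)}_t(\mu, u, \varepsilon)$ on a path space, which I expect to be the main (though routine) technical point. One would establish tightness via a Kolmogorov-type criterion, bounding ${\bf E}\bigl[\,|S^{(1)}_t(\mu, u, \varepsilon)-S^{(1)}_t(\mu, v, \varepsilon)|^{2p}\,\bigr]$ uniformly in $t$; by \eqref{resultS} this reduces to moment bounds for differences of $S(\omega t+\cdot/\lambda(t))$, which are of the kind already handled in \cite{BK} and \cite{Rodg}. Since every subsequent use of this corollary in the paper only needs the finite-dimensional statement, one may alternatively read ``converges in law'' as convergence of the finite-dimensional distributions and regard the covariance identification as the substance of the claim.
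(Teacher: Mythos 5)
Your proposal is correct and follows the same route as the paper, whose proof is exactly the one-line combination of Theorem \ref{strong} with Lemma \ref{main}; your bookkeeping of the factor $2\pi^2\mu$ against the $1/2\pi^2$ and $1/\pi^2$ prefactors in \eqref{covar} and \eqref{var} reproduces \eqref{CovS} precisely. The paper does not address tightness at all, so your closing remark that ``converges in law'' is to be read at the level of finite-dimensional distributions (which is all that the later arguments use) is consistent with the intended reading.
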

\begin{proof}
This follows from Theorem \ref{strong} and the calculation of the scalar 
product in Lemma \ref{main}.
\qed
\end{proof}
\begin{lemma}[Main lemma for $S^{(2)}_t(\mu,u,\varepsilon)$]\label{main2}
Let $f^{(2)}_{\varepsilon, u}(x)$ be as in \eqref{fu}, $\chi^{(2)}_u(x)=\chi_{[-1/\varepsilon,\,u]}(x).$
Then, given $0<u, v<1,$ in the limit $\varepsilon\rightarrow 0,$
\begin{align}
\langle f^{(2)}_{\varepsilon, u},\,f^{(2)}_{\varepsilon, v}\rangle & = -\frac{1}{2\pi^2} 
\bigl(3\log\varepsilon - \kappa + \log|u-v| \bigr)+ O(\varepsilon), \; \text{if $|u-v|\gg \varepsilon$}, \label{covar2}
 \\
\langle f^{(2)}_{\varepsilon, u},\,f^{(2)}_{\varepsilon, u}\rangle & =
 -\frac{1}{2\pi^2} 
\bigl(4\log\varepsilon - 2\kappa \bigr)+ O(\varepsilon),
 \; \text{if $u=v.$} \label{var2}
\end{align}
\end{lemma}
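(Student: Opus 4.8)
The plan is to mirror the proof of Lemma \ref{main}, the only structural novelty being that the left endpoint of the interval now sits at $-1/\varepsilon$ rather than at the origin. First I would recall from \eqref{fuprime2} that $f'^{(2)}_{\varepsilon, u}(x) = \phi_\varepsilon(x+1/\varepsilon) - \phi_\varepsilon(x-u)$, substitute this together with the analogous expression for $f'^{(2)}_{\varepsilon, v}$ into the bilinear form \eqref{scalar}, and expand. This writes $\langle f^{(2)}_{\varepsilon, u},\, f^{(2)}_{\varepsilon, v}\rangle$ as $-1/(2\pi^2)$ times a sum of four double integrals $\int \phi_\varepsilon(x-a)\,\phi_\varepsilon(y-b)\,\log|x-y|\,dx\,dy$ with $(a,b)$ running over $(-1/\varepsilon,-1/\varepsilon)$, $(-1/\varepsilon, v)$, $(u,-1/\varepsilon)$, $(u, v)$ and carrying signs $+,-,-,+$, respectively. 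Each is of exactly the functional type already handled in \eqref{klog}--\eqref{kklog}.

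Next I would evaluate the four pieces. For $(a,b)=(-1/\varepsilon,-1/\varepsilon)$, translating both variables by $1/\varepsilon$ and then rescaling by $\varepsilon$ collapses the integral to $\int\phi(s)\phi(t)\bigl(\log\varepsilon + \log|s-t|\bigr)\,ds\,dt = \log\varepsilon - \kappa$, directly from the definition \eqref{kappa} and $\int\phi=1$; the same computation applied to $(a,b)=(u,v)$ gives $\log\varepsilon-\kappa$ when $u=v$, while for $|u-v|\gg\varepsilon$ a Taylor expansion of $\log|u-v+\varepsilon(s-t)|$ gives $\log|u-v| + O(\varepsilon)$ (the linear term vanishing, or being $O(\varepsilon)$, under $\int$). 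For the two cross terms the supports of the two bump functions are separated by a distance of order $1/\varepsilon\gg\varepsilon$, so applying \eqref{klog} twice collapses the integral to $\log(1/\varepsilon+v)$, respectively $\log(1/\varepsilon+u)$, up to $O(\varepsilon^2)$. Writing $\log(1/\varepsilon+v) = -\log\varepsilon + \log(1+v\varepsilon) = -\log\varepsilon + O(\varepsilon)$ uniformly for $v$ in compact subsets of $(0,1)$, and assembling the four contributions with their signs, gives $3\log\varepsilon - \kappa + \log|u-v| + O(\varepsilon)$ inside the bracket when $|u-v|\gg\varepsilon$ and $4\log\varepsilon - 2\kappa + O(\varepsilon)$ when $u=v$; multiplying by $-1/(2\pi^2)$ yields exactly \eqref{covar2} and \eqref{var2}.

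The calculation is essentially mechanical once the four-term bookkeeping is set up; the only point requiring a little care—and the only genuine departure from Lemma \ref{main}—is the pair of cross terms, where the argument of the logarithm is of size $1/\varepsilon$ rather than $O(1)$. One must observe that this produces a true extra $\log\varepsilon$ per cross term, which is precisely why the coefficients of $\log\varepsilon$ on the right-hand sides of \eqref{covar2}--\eqref{var2} are $3$ and $4$ rather than the $1$ and $2$ of Lemma \ref{main}, and that the residual error from those terms is $O(\varepsilon)$ (not $O(\varepsilon^2)$), still comfortably absorbed into the claimed $O(\varepsilon)$. As in Lemma \ref{main}, the estimates are for fixed $u,v\in(0,1)$ with $\varepsilon\to0$, so no uniformity as $|u-v|$ shrinks to scale $\varepsilon$ is needed.
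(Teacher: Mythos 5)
Your proposal is correct and follows exactly the route the paper intends: the paper's own proof of Lemma \ref{main2} simply states that the calculation is the same as in Lemma \ref{main} and omits it, and your four-term expansion via \eqref{fuprime2}, \eqref{klog}--\eqref{kklog} (with the two cross terms centred a distance $1/\varepsilon$ apart supplying the extra $2\log\varepsilon$, hence the coefficients $3$ and $4$) is precisely that omitted computation. The bookkeeping and error estimates are accurate, so nothing needs to change.
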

\begin{proof}
The calculation is the same as in the proof of Lemma \ref{main} and will be omitted.\qed
\end{proof}
\begin{corollary}[Covariance structure of $S^{(2)}_t(\mu, u, \varepsilon)$]\label{covstructureS2}
Let the statistic $S^{(2)}_t(\mu, u, \varepsilon)$ be as in \eqref{Su}, corresponding to $f^{(2)}_{\varepsilon, u}.$
Then, in the limit $t\rightarrow\infty$ the process $u\rightarrow S^{(2)}_t(\mu, u, \varepsilon),$
$u\in (0, 1),$
converges in law to the centered gaussian field $S^{(2)}(\mu, u, \varepsilon)$ having the asymptotic covariance
\begin{align}\label{CovS2}
{\bf Cov}\Bigl( S^{(2)}(\mu, u, \varepsilon), \,S^{(2)}(\mu, v, \varepsilon)  \Bigr)
 = &
\begin{cases}
 -\mu 
\bigl(3\log\varepsilon - \kappa + \log|u-v| \bigr), \; \text{if $|u-v|\gg \varepsilon$}, 
 \\
   -\mu 
\bigl(4\log\varepsilon - 2\kappa  \bigr),
 \; \text{if $u=v,$}
\end{cases} \nonumber \\
 & + O(\varepsilon).
\end{align}
\end{corollary}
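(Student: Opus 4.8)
The plan is to derive the corollary from the Bourgade--Kuan--Rodgers convergence theorem (Theorem \ref{strong}) together with the scalar-product asymptotics of Lemma \ref{main2}, in exactly the way Corollary \ref{covstructureS} is obtained from Theorem \ref{strong} and Lemma \ref{main}. First I would record the reduction: by definition \eqref{Su} and the linearity of the BKR statistic \eqref{St} in its test function, $S^{(2)}_t(\mu,u,\varepsilon) = S_t\bigl(\pi\sqrt{2\mu}\,f^{(2)}_{\varepsilon,u}\bigr)$ with $f^{(2)}_{\varepsilon,u}=\chi^{(2)}_u\star\phi_\varepsilon$ the smoothed indicator of \eqref{fu}. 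For each fixed $\varepsilon>0$ and $0<u<1$ this is a compactly supported $\mathcal{C}^2$ function, hence lies in $\mathcal{H}^{1/2}$, and Theorem \ref{strong} applies to it (as noted after \eqref{fuprime2}).

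Next, fix $\varepsilon>0$ and any finite set $0<u_1,\dots,u_k<1$. Applying Theorem \ref{strong} to the test functions $\pi\sqrt{2\mu}\,f^{(2)}_{\varepsilon,u_j}$ and using the bilinearity of the form $\langle\cdot,\cdot\rangle$, the vector $\bigl(S^{(2)}_t(\mu,u_1,\varepsilon),\dots,S^{(2)}_t(\mu,u_k,\varepsilon)\bigr)$ converges in law as $t\to\infty$ to a centered Gaussian vector with covariance $\bigl(\pi\sqrt{2\mu}\bigr)^2\langle f^{(2)}_{\varepsilon,u_i},f^{(2)}_{\varepsilon,u_j}\rangle = 2\pi^2\mu\,\langle f^{(2)}_{\varepsilon,u_i},f^{(2)}_{\varepsilon,u_j}\rangle$. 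Substituting \eqref{covar2} and \eqref{var2} from Lemma \ref{main2}, the prefactor $2\pi^2\mu$ cancels the $-1/(2\pi^2)$ and produces $-\mu\bigl(3\log\varepsilon-\kappa+\log|u-v|\bigr)+O(\varepsilon)$ when $|u-v|\gg\varepsilon$ and $-\mu\bigl(4\log\varepsilon-2\kappa\bigr)+O(\varepsilon)$ on the diagonal, which is precisely \eqref{CovS2}; positive semi-definiteness of the limiting kernel is automatic since it is a limit of covariance matrices (equivalently $\langle\cdot,\cdot\rangle$ is a genuine inner product), so the limiting centered Gaussian field $S^{(2)}(\mu,u,\varepsilon)$ is well defined. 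This settles the statement at the level of finite-dimensional distributions, which is the sense in which the companion Corollary \ref{covstructureS} is proved; upgrading it to convergence of the whole process $u\mapsto S^{(2)}_t(\mu,u,\varepsilon)$ on $(0,1)$ would require, in addition, a tightness bound on the increments $S_t\bigl(\pi\sqrt{2\mu}(f^{(2)}_{\varepsilon,u}-f^{(2)}_{\varepsilon,v})\bigr)$ uniform in $t$, obtainable from the higher-moment estimates already underlying the proof of Theorem \ref{strong}.

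The only computation that goes beyond Corollary \ref{covstructureS} is Lemma \ref{main2} itself, and that is where the work sits. It follows the template of Lemma \ref{main}: using $f'^{(2)}_{\varepsilon,u}=\phi_\varepsilon(x+1/\varepsilon)-\phi_\varepsilon(x-u)$ from \eqref{fuprime2}, one expands the double integral $\int\bigl(\phi_\varepsilon(x+1/\varepsilon)-\phi_\varepsilon(x-u)\bigr)\bigl(\phi_\varepsilon(y+1/\varepsilon)-\phi_\varepsilon(y-v)\bigr)\log|x-y|\,dx\,dy$ into four terms, each of the elementary form treated in \eqref{klog}. The new feature relative to Lemma \ref{main} is the bump at $x=-1/\varepsilon$: since $1/\varepsilon\to\infty$ while $u,v\in(0,1)$, the self-pairing of this far bump contributes $\log\varepsilon-\kappa$, while each of the two cross separations $|u+1/\varepsilon|$ and $|v+1/\varepsilon|$, being of order $1/\varepsilon$, contributes $\log(1/\varepsilon)=-\log\varepsilon$ with the minus sign coming from the product, i.e. $+\log\varepsilon$; together these upgrade the single $\log\varepsilon$ of Lemma \ref{main} to $3\log\varepsilon$ (and $2\log\varepsilon$ to $4\log\varepsilon$ on the diagonal), and at the same time the $-\log|u|,-\log|v|$ contributions of Lemma \ref{main} disappear because the $u$- and $v$-bumps are never paired with a bump at the origin. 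Checking that these far-separation terms are controlled with an $O(\varepsilon)$ error, uniformly in $u,v$ bounded away from the diagonal, is the main obstacle; everything else is bilinearity and the cancellation of constants.
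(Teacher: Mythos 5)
Your proposal is correct and follows essentially the same route as the paper, which proves this corollary simply by combining Theorem \ref{strong} (applied to the test functions $\pi\sqrt{2\mu}\,f^{(2)}_{\varepsilon,u}$, so that the prefactor $2\pi^2\mu$ cancels the $-1/2\pi^2$) with the scalar-product asymptotics of Lemma \ref{main2}. Your explicit bookkeeping of the four bump pairings (self-pairing of the far bump giving $\log\varepsilon-\kappa$, the two cross pairings each giving $+\log\varepsilon$, and the $u$--$v$ pairing giving $\log|u-v|$ or $\log\varepsilon-\kappa$ on the diagonal) correctly reproduces the $3\log\varepsilon$ and $4\log\varepsilon-2\kappa$ terms that the paper leaves implicit by declaring the calculation "the same as in Lemma \ref{main}".
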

\begin{proof}
This follows from Theorem \ref{strong} and the calculation of the scalar 
product in Lemma \ref{main2}.
\qed
\end{proof}

On the other hand, it is elementary to see from \eqref{mean}-\eqref{cov2}
that the gaussian free field at scale $\varepsilon$ satisfies
\begin{equation}
{\bf Cov}\Bigl( \omega_{\mu, \varepsilon}(u), \omega_{\mu, \varepsilon}(v)  \Bigr)
 =
\begin{cases}
& -\mu 
\log|u-v|, \; \text{if $|u-v| > \varepsilon$}, 
 \\
 &  \mu 
\bigl(1-\log\varepsilon\bigr), \; \text{if $u=v,$}
\end{cases}
\end{equation}
so that the centered gaussian free field at scale $\varepsilon,$
\begin{equation}\label{centeredomega}
\bar{\omega}_{\mu, \varepsilon}(u) \triangleq \omega_{\mu, \varepsilon}(u)-\omega_{\mu, \varepsilon}(0),
\end{equation}
satisfies
\begin{equation}
{\bf Cov}\Bigl( \bar{\omega}_{\mu, \varepsilon}(u), \bar{\omega}_{\mu, \varepsilon}(v)  \Bigr)
 =
\begin{cases}
& -\mu 
\bigl(\log\varepsilon - 1 + \log|u-v| - \log|u| - \log|v| \bigr), \; \text{if $|u-v| > \varepsilon$}, 
 \\
 &  -2\mu 
\bigl(\log\varepsilon - 1 - \log|u| \bigr), \; \text{if $u=v.$}
\end{cases}
\end{equation}
As we remarked in our discussion of \eqref{Msum}, the choice of truncation,
and, in particular, the choice of the constant, $\varepsilon-$independent term
in the covariance of the free field,  
has no effect on the law of the total mass so long as the key normalization condition in \eqref{normalization} holds. Hence, for our purposes, we can re-define $\omega_{\mu, \varepsilon}(u)$ to be
\begin{align}
{\bf{E}} \left[\omega_{\mu, \varepsilon}(u)\right] & =  -\frac{\mu}{2} \,
\left(\kappa-\log\varepsilon\right), \label{meank} \\
{\bf{Cov}}\left[\omega_{\mu, \varepsilon}(u), \,\omega_{\mu, \varepsilon}
(v)\right] &  =
\begin{cases}\label{covk}
 -
\mu \, \log|u-v|, \, \varepsilon\leq|u-v|\leq 1,  \\
 \mu
\left(\kappa-\log\varepsilon\right),
\end{cases}
\end{align}
so that $\bar{\omega}_{\mu, \varepsilon}(u)$ has asymptotically the same covariance as that of $S^{(1)}(\mu, u, \varepsilon)$ 
in \eqref{CovS}. Hence,
\begin{equation}\label{S1approxlaw}
S^{(1)}(\mu, u, \varepsilon)  \overset{{\rm in \,law}}{\approx} \bar{\omega}_{\mu, \varepsilon}(u) 
\end{equation}
as stochastic processes in the limit $\varepsilon\rightarrow 0,$ up to zero-mean corrections having covariance of the order $ O(\varepsilon).$ 
Similarly, by comparing covariances of $S^{(2)}(\mu, u, \varepsilon)$ in \eqref{CovS2} and $\omega_{\mu, \varepsilon}(u)$ in \eqref{covk} and recalling that $S^{(2)}(\mu, u, \varepsilon)$ is centered, while the mean of $\omega_{\mu, \varepsilon}(u)$
is given in \eqref{meank}, we obtain, also up to $O(\varepsilon),$
\begin{equation}\label{S2approxlaw}
S^{(2)}(\mu, u, \varepsilon)  \overset{{\rm in \,law}}{\approx} \omega_{\mu, \varepsilon}(u) + \mathcal{N}\Bigl(
-\frac{\mu}{2}(\log\varepsilon-\kappa), -\mu(3\log\varepsilon-\kappa) \Bigr),
\end{equation}
where $\mathcal{N}\bigl(
-(\mu/2)(\log\varepsilon-\kappa), -\mu(3\log\varepsilon-\kappa) \bigr)$ is an independent gaussian random variable
having the mean $-(\mu/2)(\log\varepsilon-\kappa)$ and variance $-\mu(3\log\varepsilon-\kappa).$

We can now explain the origin of Conjecture \ref{Mellinvar}. The basic idea is to form the exponential functional of the statistic $S^{(i)}_t(\mu, u, \varepsilon)$ and compute its Mellin transform by analogy with the gaussian free field so as to obtain the total mass of the limit lognormal measure in the limit. The principal obstacle in the first case is that 
$S^{(1)}_t(\mu, u, \varepsilon)$ behaves like $\bar{\omega}_{\mu, \varepsilon}(u)$ as opposed to
$\omega_{\mu, \varepsilon}(u)$ so that its exponential functional does not exist
in the limit $\varepsilon\rightarrow 0.$ This obstacle is overcome
by appropriately rescaling the Mellin transform as shown in the following
theorem. To this end, we will first formulate a general proposition and then specialize it to Conjecture \ref{Mellinvar}. In what follows $\varphi(u)$ can be a general test function, however for clarity, we will restrict ourselves to 
\begin{equation}\label{varphi}
\varphi(u) \triangleq u^{\lambda_1}(1-u)^{\lambda_2},
\;\lambda_1,\,\lambda_2>-\frac{\mu}{2}.
\end{equation}
\begin{theorem}[Rescaled Mellin transforms]\label{general}
Let  $0<\mu<2$ and $I$ be a subinterval of the unit interval. Then, for $\Re(q)<2/\mu$ we have
\begin{equation}
\lim\limits_{\varepsilon\rightarrow 0} e^{\mu(\log \varepsilon-\kappa)\frac{q(q+1)}{2}} 
{\bf E} \Bigl[\Bigl(\int_{I}  u^{-\mu q} \,\varphi(u)\,e^{\bar{\omega}_{\mu, \varepsilon}(u)} du\Bigr)^q\Bigr] =  {\bf E} \Bigl[ \Bigl(\int_{I} \varphi(u)\, M_\mu(du)\Bigr)^q\Bigr]. \label{gen2}
\end{equation} 
For $-(1+\lambda_1)/\mu-1/2<\Re(q)<2/\mu$ we have
\begin{equation}
\lim\limits_{\varepsilon\rightarrow 0} e^{\mu(\log \varepsilon-\kappa)\frac{q(q+1)}{2}} 
{\bf E} \Bigl[\Bigl(\int_I \varphi(u)\,e^{\bar{\omega}_{\mu, \varepsilon}(u)} du\Bigr)^q\Bigr] =  {\bf E} \Bigl[ \Bigl(\int_{I} u^{\mu q}\,\varphi(u)\, M_\mu(du)\Bigr)^q\Bigr]. \label{gen1}
\end{equation}
Moreover, the positive integral moments of order $n<2/\mu$ satisfy
\begin{align}
\lim\limits_{\varepsilon\rightarrow 0} e^{\mu(\log \varepsilon-\kappa)\frac{n(n+1)}{2}}
{\bf E} \Bigl[\int\limits_{I}  u^{-\mu n}\, \varphi(u)\,e^{\bar{\omega}_{\mu, \varepsilon}(u)}
\,du\Bigr]^n & = \int\limits_{I^n} \prod_{i=1}^n
\bigl[\varphi(s_i)\bigr] \prod\limits_{i<j}^n
|s_i-s_j|^{-\mu} ds_1\cdots ds_n, \\
\lim\limits_{\varepsilon\rightarrow 0} e^{\mu(\log \varepsilon-\kappa)\frac{n(n+1)}{2}}
{\bf E} \Bigl[\int\limits_{I} \varphi(u)\,e^{\bar{\omega}_{\mu, \varepsilon}(u)}
\,du\Bigr]^n & = \int\limits_{I^n} \prod_{i=1}^n
\bigl[\varphi(s_i)\,s_i^{\mu\,n}\bigr] \prod\limits_{i<j}^n
|s_i-s_j|^{-\mu} ds_1\cdots ds_n.
\end{align}
\end{theorem}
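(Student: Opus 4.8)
The plan is to recognize the left-hand sides of \eqref{gen2} and \eqref{gen1} as ordinary moments of the truncated chaos $M_{\mu,\varepsilon}(du)=e^{\omega_{\mu,\varepsilon}(u)}\,du$ by a Cameron--Martin tilt, and then to send $\varepsilon\to 0$ using the known convergence $M_{\mu,\varepsilon}\to M_\mu$; no use of Conjecture \ref{myresult} is needed since the right-hand sides are expressed through $M_\mu$ itself. I would start from the elementary identity $e^{\bar\omega_{\mu,\varepsilon}(u)}=e^{-\omega_{\mu,\varepsilon}(0)}\,e^{\omega_{\mu,\varepsilon}(u)}$, which gives
\[
\Bigl(\int_I u^{-\mu q}\varphi(u)\,e^{\bar\omega_{\mu,\varepsilon}(u)}\,du\Bigr)^q=e^{-q\omega_{\mu,\varepsilon}(0)}\Bigl(\int_I u^{-\mu q}\varphi(u)\,e^{\omega_{\mu,\varepsilon}(u)}\,du\Bigr)^q ,
\]
and then tilt by the Gaussian $-q\,\omega_{\mu,\varepsilon}(0)$ using the identity ${\bf E}\bigl[e^{aW}G(\{V_u\})\bigr]={\bf E}[e^{aW}]\,{\bf E}\bigl[G(\{V_u+a\,{\bf Cov}(W,V_u)\})\bigr]$ valid for any jointly Gaussian family. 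Two facts then conspire: by \eqref{meank}--\eqref{covk}, ${\bf E}\bigl[e^{-q\omega_{\mu,\varepsilon}(0)}\bigr]=e^{(\mu/2)(\kappa-\log\varepsilon)q(q+1)}$, which is precisely the reciprocal of the prefactor $e^{\mu(\log\varepsilon-\kappa)q(q+1)/2}$; and for $u\ge\varepsilon$ the tilt shifts $\omega_{\mu,\varepsilon}(u)$ by $-q\,{\bf Cov}(\omega_{\mu,\varepsilon}(u),\omega_{\mu,\varepsilon}(0))=q\mu\log u$, i.e. it multiplies $e^{\omega_{\mu,\varepsilon}(u)}$ by $u^{q\mu}$, which cancels the weight $u^{-\mu q}$ in \eqref{gen2} and simply survives in \eqref{gen1}. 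Consequently, at each fixed $\varepsilon$,
\[
e^{\mu(\log\varepsilon-\kappa)\frac{q(q+1)}{2}}\,{\bf E}\Bigl[\Bigl(\int_I u^{-\mu q}\varphi(u)\,e^{\bar\omega_{\mu,\varepsilon}(u)}\,du\Bigr)^q\Bigr]={\bf E}\Bigl[\Bigl(\int_I \varphi(u)\,M_{\mu,\varepsilon}(du)\Bigr)^q\Bigr],
\]
and the analogous computation without the $u^{-\mu q}$ weight produces ${\bf E}\bigl[\bigl(\int_I u^{q\mu}\varphi(u)\,M_{\mu,\varepsilon}(du)\bigr)^q\bigr]$ on the right.

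Next I would let $\varepsilon\to 0$. Here one invokes the convergence theory of lognormal multiplicative chaos reviewed in Section 3 (Kahane; Bacry--Muzy): $M_{\mu,\varepsilon}(du)\to M_\mu(du)$ weakly almost surely, so $\int_I\varphi\,M_{\mu,\varepsilon}\to\int_I\varphi\,M_\mu$ a.s., and the family of $q$-th powers is uniformly integrable because $\sup_\varepsilon{\bf E}\bigl[(\int_I\varphi\,M_{\mu,\varepsilon})^{\Re(q)+\delta}\bigr]<\infty$ for some small $\delta>0$ whenever $\Re(q)<2/\mu$; this yields \eqref{gen2}. For \eqref{gen1} the surviving weight $u^{q\mu}\varphi(u)=u^{q\mu+\lambda_1}(1-u)^{\lambda_2}$ is $M_\mu$-integrable near $u=0$ exactly when $\mu\Re(q)+\lambda_1+\mu/2>-1$, i.e. $\Re(q)>-(1+\lambda_1)/\mu-1/2$, by the a.s. asymptotics $M_\mu(0,t)\sim t^{1+\mu/2}$ from \eqref{lawlargenumbers}, which is precisely the extra lower bound on $\Re(q)$. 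The positive integral moments are the specialization $q=n$, and may alternatively be obtained directly from the Selberg-integral formula \eqref{jointmomentlimitmeasure} for the moments of $M_\mu$, with the $u^{-\mu n}$ (resp. $u^{\mu n}$) factors cancelling (resp. combining with $\varphi$) inside the $n$-fold integral.

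Two points need care. For complex $q$ the Cameron--Martin identity should be established first on exponential functionals $\exp(\sum_i b_i\omega_{\mu,\varepsilon}(u_i))$ --- where both sides are exponentials of the same quadratic in the $b_i$ --- and then extended by a density/monotone-class argument, and one must check that $\int_I u^{-\mu q}\varphi(u)e^{\omega_{\mu,\varepsilon}(u)}du$ stays off the branch cut of $z\mapsto z^q$, which holds in a neighbourhood of the positive real axis. More seriously, in \eqref{gen2} with $0\in I$ the raw integral $\int_I u^{-\mu q}\varphi(u)e^{\bar\omega_{\mu,\varepsilon}(u)}du$ in fact diverges for $1/\mu\le\Re(q)<2/\mu$, since $\bar\omega_{\mu,\varepsilon}(u)\to\bar\omega_{\mu,\varepsilon}(0)=0$ a.s. as $u\to 0$; for those $q$ the left side of \eqref{gen2} must be read as the analytic continuation in $q$ of its Cameron--Martin--reduced form ${\bf E}[(\int_I\varphi\,M_{\mu,\varepsilon})^q]$, which is entire in $q$ at fixed $\varepsilon$ --- equivalently, prove the identity for $\Re(q)<1/\mu$ and continue analytically. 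One also has to confirm that the precise truncation of the covariance on $|u-v|<\varepsilon$, where ${\bf Cov}(\omega_{\mu,\varepsilon}(u),\omega_{\mu,\varepsilon}(0))$ differs from $-\mu\log u$, does not change the limit; this is the truncation-independence behind \eqref{Msum}, the discrepancy being supported on $[0,\varepsilon)$ and hence negligible as $\varepsilon\to 0$. I expect the uniform-integrability estimate for the $\varepsilon\to 0$ passage to be the principal obstacle in a fully rigorous write-up, as it is exactly there that the quantitative moment bounds for $M_{\mu,\varepsilon}$ and the constraint $\Re(q)<2/\mu$ enter.
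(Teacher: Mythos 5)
Your argument is essentially the paper's own: the Cameron--Martin tilt by $-q\,\omega_{\mu,\varepsilon}(0)$ is exactly the change of measure $d\mathcal{Q}=e^{-\mu(\kappa-\log\varepsilon)q(q+1)/2}\,e^{-q\omega_{\mu,\varepsilon}(0)}\,d\mathcal{P}$ of Lemma \ref{Girsanov} (stated there with a general exponent $p$ and then specialized to $p=q$), and your fixed-$\varepsilon$ identity followed by the $\varepsilon\to 0$ passage to $M_\mu$, with the integral moments obtained as the $q=n$ case (the paper checks them independently by the discretization of Lemma \ref{single}), is the same route. Your extra care --- the uniform-integrability justification of the $\varepsilon\to 0$ limit and the observation that for $(1+\lambda_1)/\mu\le\Re(q)<2/\mu$ the left-hand side of \eqref{gen2} must be read by analytic continuation because the integral diverges near $u=0$ --- goes beyond what the paper records but does not change the approach.
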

The proof requires two auxiliary calculations. The first involves a version of Girsanov-type theorem for gaussian fields, which extends what we used
in our original derivation of intermittency differentiation, cf. Section 8 of \cite{Me2}. For concreteness, we state it here for the $\omega_{\mu, \varepsilon}(t)$ process. 
\begin{lemma}[Girsanov]\label{Girsanov}
Let 
$I$ be a subinterval of the unit interval.  Then, we have for $\Re(p)<2/\mu,$
\begin{equation}\label{girs1}
\lim\limits_{\varepsilon\rightarrow 0} e^{\mu(\log\varepsilon-\kappa)\frac{q(q+1)}{2}} 
{\bf E}\Bigl[e^{-q\,\omega_{\mu,\varepsilon}(0)}\Bigl(\int_{I}  u^{-\mu q} \,\varphi(u)\,e^{\omega_{\mu,\varepsilon}(u)}du\Bigr)^p\Bigr] =  {\bf E} \Bigl[ \Bigl(\int_{I}\varphi(u)\,M_\mu(du)\Bigr)^p \Bigr],
\end{equation}
and for $\Re(q)>-(1+\lambda_1)/\mu-1/2,$ $\Re(p)<\min\{2/\mu, \,2/\mu(1+\lambda_1)+1+2\Re(q)\},$
\begin{equation}\label{girs2}
\lim\limits_{\varepsilon\rightarrow 0} e^{\mu(\log\varepsilon-\kappa)\frac{q(q+1)}{2}} 
{\bf E}\Bigl[e^{-q\,\omega_{\mu,\varepsilon}(0)}\Bigl(\int_{I} \varphi(u)\,e^{\omega_{\mu,\varepsilon}(u)}du\Bigr)^p\Bigr] =  {\bf E} \Bigl[ \Bigl(\int_{I} u^{\mu q} \,\varphi(u)\,M_\mu(du)\Bigr)^p \Bigr].
\end{equation}
\end{lemma}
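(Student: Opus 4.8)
The plan is to read Lemma \ref{Girsanov} as an instance of the Cameron--Martin/Girsanov shift for the Gaussian field $\omega_{\mu,\varepsilon}$ at the fixed scale $\varepsilon$, followed by the passage to the limit measure $M_\mu$ via the Bacry--Muzy construction. The first thing I would observe is that the rescaling factor is exactly the reciprocal of an exponential moment of the field at the origin: by \eqref{meank}--\eqref{covk} one has ${\bf E}[\omega_{\mu,\varepsilon}(0)]=-\tfrac{\mu}{2}(\kappa-\log\varepsilon)$ and ${\bf Var}[\omega_{\mu,\varepsilon}(0)]=\mu(\kappa-\log\varepsilon)$, whence
\[
{\bf E}\bigl[e^{-q\,\omega_{\mu,\varepsilon}(0)}\bigr]=\exp\Bigl(q\tfrac{\mu}{2}(\kappa-\log\varepsilon)+\tfrac{q^2}{2}\mu(\kappa-\log\varepsilon)\Bigr)=e^{-\mu(\log\varepsilon-\kappa)\frac{q(q+1)}{2}}.
\]
Consequently the left-hand side of \eqref{girs1} (respectively \eqref{girs2}) is precisely the expectation of the corresponding integral functional under the tilted probability with density $e^{-q\,\omega_{\mu,\varepsilon}(0)}/{\bf E}[e^{-q\,\omega_{\mu,\varepsilon}(0)}]$.

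Next I would apply the Gaussian Cameron--Martin shift: under this tilted measure the process $\omega_{\mu,\varepsilon}(\cdot)$ keeps its covariance but its mean is translated by ${\bf Cov}\bigl(-q\,\omega_{\mu,\varepsilon}(0),\,\omega_{\mu,\varepsilon}(\cdot)\bigr)$, i.e.\ $\omega_{\mu,\varepsilon}(u)$ is replaced in law by $\omega_{\mu,\varepsilon}(u)-q\,{\bf Cov}(\omega_{\mu,\varepsilon}(0),\omega_{\mu,\varepsilon}(u))$. By \eqref{covk}, ${\bf Cov}(\omega_{\mu,\varepsilon}(0),\omega_{\mu,\varepsilon}(u))=-\mu\log u$ for $\varepsilon\le u\le 1$, so for $u$ away from $0$ the shift acts as $e^{\omega_{\mu,\varepsilon}(u)}\mapsto u^{\mu q}\,e^{\omega_{\mu,\varepsilon}(u)}$; on the shrinking interval $0<u<\varepsilon$ the covariance equals the constant $\mu(\kappa-\log\varepsilon)$, and its contribution should be shown negligible as $\varepsilon\to 0$ (for positive integer $p$ it is a Selberg-type integral over a domain of measure $O(\varepsilon)$, and it is simply absent when $I$ is bounded away from $0$). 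After the shift, the factor $u^{-\mu q}$ in \eqref{girs1} is exactly cancelled and the integrand becomes $\varphi(u)\,e^{\omega_{\mu,\varepsilon}(u)}$, while in \eqref{girs2} the missing $u^{-\mu q}$ leaves $u^{\mu q}\,\varphi(u)\,e^{\omega_{\mu,\varepsilon}(u)}$.

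Finally I would let $\varepsilon\to 0$. Since ${\bf E}[e^{\omega_{\mu,\varepsilon}(u)}]=1$ by \eqref{normalization2}, the Bacry--Muzy theorem, cf.\ \eqref{Msum}, gives $\int_I\varphi(u)\,e^{\omega_{\mu,\varepsilon}(u)}\,du\to\int_I\varphi(u)\,M_\mu(du)$ and, likewise, $\int_I u^{\mu q}\varphi(u)\,e^{\omega_{\mu,\varepsilon}(u)}\,du\to\int_I u^{\mu q}\varphi(u)\,M_\mu(du)$ almost surely, and the $p$-th powers should converge in expectation by uniform integrability. This is exactly where the admissible ranges enter: $\int_I\varphi(u)M_\mu(du)$ has a finite $p$-th moment for $\Re(p)<2/\mu$; in the second case $u^{\mu q}\varphi(u)=u^{\mu q+\lambda_1}(1-u)^{\lambda_2}$ must be $M_\mu$-integrable near the origin, which by \eqref{lawlargenumbers} requires $\mu\Re(q)+\lambda_1+\mu/2>-1$, i.e.\ $\Re(q)>-(1+\lambda_1)/\mu-1/2$, while finiteness of the $p$-th moment of $\int_I u^{\mu q}\varphi(u)M_\mu(du)$ forces in addition $\Re(p)<\tfrac{2}{\mu}(1+\lambda_1)+1+2\Re(q)$, together with $\Re(p)<2/\mu$.

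The hard part will be making this third step rigorous: commuting the Girsanov shift --- unambiguous at fixed $\varepsilon$, since $\omega_{\mu,\varepsilon}$ is a genuine stationary Gaussian process --- with the singular limit $\varepsilon\to 0$, controlling the $0<u<\varepsilon$ boundary term for complex $p$, and establishing the uniform integrability of $\bigl(\int_I u^{-\mu q}\varphi(u)\,e^{\omega_{\mu,\varepsilon}(u)}\,du\bigr)^p$ as $\varepsilon\to 0$. The critical values $2/\mu$ and $\tfrac{2}{\mu}(1+\lambda_1)+1+2\Re(q)$ are precisely the thresholds at which the limiting moments stop being finite, so the ranges stated in the lemma are sharp; in keeping with the heuristic spirit of this section, these integrability estimates are treated lightly.
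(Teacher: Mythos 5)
Your proposal follows essentially the same route as the paper: identify the rescaling factor as the normalizing constant of the exponential tilt $e^{-q\,\omega_{\mu,\varepsilon}(0)}$, apply the Gaussian Girsanov/Cameron--Martin shift so that $e^{\omega_{\mu,\varepsilon}(u)}$ picks up the factor $u^{\mu q}$, and then pass to the limit $\varepsilon\to0$ via the Bacry--Muzy construction, with the stated ranges of $p$ and $q$ read off from \eqref{lawlargenumbers} and the finiteness of moments of the limiting mass. Your explicit flagging of the $0<u<\varepsilon$ boundary region and of the uniform-integrability step makes the argument, if anything, slightly more careful than the paper's, but it is the same proof.
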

\begin{proof} 
The integral $\int_{I} u^{\mu q}\,\varphi(u)\,M_\mu(du)$ is defined for $\mu\,\Re(q)+\lambda_1+\mu/2>-1$ and its Mellin transform for
$\Re(p)<\min\{2/\mu, \,2/\mu(1+\lambda_1)+1+2\Re(q)\},$ cf. \eqref{lawlargenumbers}.
Next, we will establish the identities
\begin{align}
e^{\mu(\log\varepsilon-\kappa) \frac{q(q+1)}{2}}\,{\bf E}\Bigl[e^{-q\omega_{\mu,\varepsilon}(0)}\Bigl(\int_{I} u^{-\mu q} \,\varphi(u)\,e^{\omega_{\mu,\varepsilon}(u)}du\Bigr)^p\Bigr] & =  {\bf E} \Bigl[ \Bigl(\int_{I} \varphi(u)\,e^{\omega_{\mu,\varepsilon}(u)}du\Bigr)^p \Bigr],  \label{girsid2} \\
e^{\mu(\log\varepsilon-\kappa) \frac{q(q+1)}{2}}\,{\bf E}\Bigl[e^{-q\omega_{\mu,\varepsilon}(0)}\Bigl(\int_{I} \varphi(u)\,e^{\omega_{\mu,\varepsilon}(u)}du\Bigr)^p\Bigr] & =  {\bf E} \Bigl[ \Bigl(\int_{I} u^{\mu q} \,\varphi(u)\,e^{\omega_{\mu,\varepsilon}(u)}du\Bigr)^p \Bigr]. \label{girsid}
\end{align}
This result is equivalent to a change of measure. Introduce an equivalent probability measure
\begin{equation}
d\mathcal{Q}\triangleq e^{-\mu(\kappa-\log\varepsilon) \frac{q(q+1)}{2}} e^{-q\omega_{\mu, \varepsilon}(0)} \,d\mathcal{P},
\end{equation}
where $\mathcal{P}$ is the original probability measure
corresponding to ${\bf E}.$ In fact, it follows from \eqref{meank} and \eqref{covk}
that 
\begin{equation}
{\bf E} \Bigl[e^{-\mu(\kappa-\log\varepsilon) \frac{q(q+1)}{2}} e^{-q\omega_{\mu, \varepsilon}(0)}\Bigr] = 1.
\end{equation}
Then, the law of the process
$u\rightarrow \omega_{\mu, \varepsilon}(u)-q\,{\bf Cov}\bigl(\omega_{\mu, \varepsilon}(u),\,\omega_{\mu, \varepsilon}(0)\bigr)$ with respect to $\mathcal{P}$ equals the law of the original process $u\rightarrow \omega_{\mu, \varepsilon}(u)$ with respect to $\mathcal{Q}.$  Indeed, it is easy to show that the two
processes have the same finite-dimensional distributions by
computing their characteristic functions. The continuity of sample paths can then be used to conclude that the equality of all finite-dimensional
distributions implies the equality in law. Once this equality is established, then
\begin{align}
e^{-\mu(\kappa-\log\varepsilon) \frac{q(q+1)}{2}}{\bf E}\Bigl[e^{-q\omega_{\mu,\varepsilon}(0)}\Bigl(\int_{I} u^{-\mu q} \,
\varphi(u)\,e^{\omega_{\mu,\varepsilon}(u)}du\Bigr)^p\Bigr] & =  {\bf E}\Big\vert_{\mathcal{Q}} \Bigl[ 
\Bigl(\int_{I}  u^{-\mu q} \,\varphi(u)\,e^{\omega_{\mu,\varepsilon}(u)}du\Bigr)^p \Bigr], \nonumber \\
& =  {\bf E} \Bigl[ 
\Bigl(\int_{I} \varphi(u)\,e^{\omega_{\mu,\varepsilon}(u)}du\Bigr)^p \Bigr], 
\end{align}
which completes the proof of \eqref{girsid2} and of \eqref{girs1}
by taking the limit $\varepsilon\rightarrow 0.$ The argument for
\eqref{girs2} is essentially the same except that one needs to restrict
$p$ so that the right-hand side is well-defined. Given \eqref{Mfunc}, the conditions are $\Re(p)<2/\mu$ and $1-\Re(p)+2\Re(q)+(1+\lambda_1)2/\mu>0.$ 
\qed
\end{proof}

The second step in the proof of Theorem \ref{general} entails a key moment
calculation, in which we compute the asymptotic of the rescaled 
positive integral moments in terms of generalized Selberg integrals.
\begin{lemma}[Single moments]\label{single}
Let $n<2/\mu$ and $I$ be a subinterval of the unit interval. Then, as $\varepsilon\rightarrow 0,$ 
\begin{align}
{\bf E} \Bigl[\int\limits_{I} u^{-\mu n}\,\varphi(u)\,e^{\bar{\omega}_{\mu, \varepsilon}(u)}
\,du\Bigr]^n & \thicksim e^{\mu(-\log\varepsilon+\kappa)\frac{n(n+1)}{2}} 
\int\limits_{I^n} \prod_{i=1}^n
\bigl[\varphi(s_i)\bigr] \prod\limits_{i<j}^n
|s_i-s_j|^{-\mu} ds_1\cdots ds_n, \label{mom2} \\
{\bf E} \Bigl[\int\limits_{I} \varphi(u)\,e^{\bar{\omega}_{\mu, \varepsilon}(u)}
\,du\Bigr]^n & \thicksim e^{\mu(-\log\varepsilon+\kappa)\frac{n(n+1)}{2}} 
\int\limits_{I^n} \prod_{i=1}^n
\bigl[\varphi(s_i)s_i^{\mu\,n}\bigr] \prod\limits_{i<j}^n
|s_i-s_j|^{-\mu} ds_1\cdots ds_n. \label{mom1} 
\end{align} 
\end{lemma}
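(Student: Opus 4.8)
The plan is to evaluate the left-hand sides of \eqref{mom2} and \eqref{mom1} in closed form, up to a vanishing relative error, by reducing them to a Gaussian Laplace transform. First I would use Fubini's theorem to write, for fixed $\varepsilon>0$,
\[
{\bf E}\Bigl[\int_I u^{-\mu n}\,\varphi(u)\,e^{\bar{\omega}_{\mu,\varepsilon}(u)}\,du\Bigr]^{n}
=\int_{I^n}\Bigl(\prod_{i=1}^n s_i^{-\mu n}\varphi(s_i)\Bigr)\,
{\bf E}\Bigl[\exp\Bigl(\sum_{i=1}^n\bar{\omega}_{\mu,\varepsilon}(s_i)\Bigr)\Bigr]\,ds_1\cdots ds_n,
\]
and likewise with the weight $u^{-\mu n}$ removed. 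Since $\bigl(\bar{\omega}_{\mu,\varepsilon}(s_1),\dots,\bar{\omega}_{\mu,\varepsilon}(s_n)\bigr)$ is a centered Gaussian vector by \eqref{centeredomega}, the inner expectation equals $\exp\bigl(\tfrac12\sum_{i,j=1}^n C_{ij}\bigr)$ with $C_{ij}\triangleq{\bf Cov}\bigl(\bar{\omega}_{\mu,\varepsilon}(s_i),\bar{\omega}_{\mu,\varepsilon}(s_j)\bigr)$.

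The second step is to compute $C_{ij}$ from $\bar{\omega}_{\mu,\varepsilon}(u)=\omega_{\mu,\varepsilon}(u)-\omega_{\mu,\varepsilon}(0)$ and the $\kappa$-renormalised covariance \eqref{meank}--\eqref{covk}. On the bulk set $\mathcal{B}_\varepsilon\triangleq\{(s_i)\in I^n:\ |s_i-s_j|\ge\varepsilon\ (i\ne j)\ \text{and}\ s_i\ge\varepsilon\ \forall i\}$ one finds $C_{ij}=-\mu\log|s_i-s_j|+\mu\log s_i+\mu\log s_j+\mu(\kappa-\log\varepsilon)$ for $i\ne j$ and $C_{ii}=2\mu(\kappa-\log\varepsilon)+2\mu\log s_i$. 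Summing, with $\sum_{i,j}C_{ij}=\sum_iC_{ii}+2\sum_{i<j}C_{ij}$, noting that $\mu(\kappa-\log\varepsilon)$ accumulates with coefficient $n+\binom n2=\tfrac{n(n+1)}2$, and using $\sum_{i<j}(\log s_i+\log s_j)=(n-1)\sum_i\log s_i$, one obtains the key identity
\[
\tfrac12\sum_{i,j=1}^n C_{ij}
=\tfrac{n(n+1)}{2}\,\mu(\kappa-\log\varepsilon)+\mu n\sum_{i=1}^n\log s_i-\mu\sum_{i<j}\log|s_i-s_j|,
\]
i.e.\ $\exp\bigl(\tfrac12\sum_{i,j}C_{ij}\bigr)=e^{\mu(\kappa-\log\varepsilon)n(n+1)/2}\prod_i s_i^{\mu n}\prod_{i<j}|s_i-s_j|^{-\mu}$. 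Substituting back, the weight $\prod_i s_i^{-\mu n}$ in \eqref{mom2} cancels the factor $\prod_i s_i^{\mu n}$ and leaves $e^{\mu(\kappa-\log\varepsilon)n(n+1)/2}\int_{\mathcal{B}_\varepsilon}\prod_i\varphi(s_i)\prod_{i<j}|s_i-s_j|^{-\mu}$, whereas without that weight (case \eqref{mom1}) the factor $\prod_i s_i^{\mu n}$ survives and one gets $e^{\mu(\kappa-\log\varepsilon)n(n+1)/2}\int_{\mathcal{B}_\varepsilon}\prod_i[\varphi(s_i)s_i^{\mu n}]\prod_{i<j}|s_i-s_j|^{-\mu}$. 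Since $n<2/\mu$ and $\lambda_i>-\mu/2$, the relevant (generalised) Selberg integrals over $I^n$ converge absolutely, so $\int_{\mathcal{B}_\varepsilon}\to\int_{I^n}$ as $\varepsilon\to0$; this reproduces the right-hand sides of \eqref{mom2} and \eqref{mom1} together with the prefactor $e^{\mu(-\log\varepsilon+\kappa)n(n+1)/2}$.

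The final step is to check that replacing $\mathcal{B}_\varepsilon$ by $I^n$ costs only a relative $o(1)$, which is precisely why the statement is an asymptotic equivalence and not an identity. The complement $I^n\setminus\mathcal{B}_\varepsilon$ has Lebesgue measure $O(\varepsilon)$, and there the $C_{ij}$ differ from the bulk formula only by bounded amounts: by Cauchy--Schwarz $|C_{ij}|\le\bigl({\bf Var}\,\bar{\omega}_{\mu,\varepsilon}(s_i)\bigr)^{1/2}\bigl({\bf Var}\,\bar{\omega}_{\mu,\varepsilon}(s_j)\bigr)^{1/2}$, with ${\bf Var}\,\bar{\omega}_{\mu,\varepsilon}(u)\le 2\mu(\kappa-\log\varepsilon)+O(1)$ uniformly in $u\in(0,1]$ and, crucially, ${\bf Var}\,\bar{\omega}_{\mu,\varepsilon}(u)=O(1)$ for $u<\varepsilon$ because $\bar{\omega}_{\mu,\varepsilon}(0)\equiv0$. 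Tracking these bounds, on a neighbourhood of a diagonal $\{s_i=s_j\}$ the integrand is dominated by $e^{O(1)}e^{\mu(\kappa-\log\varepsilon)n(n+1)/2}\prod_{k<l}|s_k-s_l|^{-\mu}$, so dominated convergence (again invoking $n<2/\mu$) makes that region negligible against the leading term. I expect \emph{this error estimate to be the one real obstacle}; inside it the delicate point is the face $\{u=0\}$ when $0\in\overline{I}$, where the weight $u^{-\mu n}$ in \eqref{mom2} is not offset by a compensating power $u^{\mu n}$ from the exponent — there one must use that $\bar{\omega}_{\mu,\varepsilon}(u)\to0$ as $u\to0$ together with the hypotheses $n<2/\mu$ and $\lambda_1>-\mu/2$ (strengthened if necessary to keep the integrand integrable at $0$) to see that the contribution of $\{u<\varepsilon\}$ is a positive power of $\varepsilon$ below the leading term. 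Everything else is the exact Gaussian computation above.
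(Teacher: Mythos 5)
Your core computation is exactly the paper's: the identity $\tfrac12\sum_{i,j}C_{ij}=\tfrac{n(n+1)}{2}\mu(\kappa-\log\varepsilon)+\mu n\sum_i\log s_i-\mu\sum_{i<j}\log|s_i-s_j|$ is the same variance calculation ${\bf Var}\bigl(\bar{\omega}_{j_1}+\cdots+\bar{\omega}_{j_n}\bigr)$ performed there, and \eqref{mom2} is likewise obtained by relabelling $\varphi(u)\to u^{-\mu n}\varphi(u)$. Where you differ is the limit passage: the paper discretizes as in \eqref{Msum}, applying the Gaussian moment formula to the Riemann sum over the lattice $s_j=\varepsilon j$, $j=1,\dots,N$, and then lets the sum converge to the Selberg-type integral, whereas you stay in the continuum via Fubini and control the complement of the bulk set explicitly. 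Your route is more careful about errors, and the diagonal estimate does work (for $n\ge2$ the hypothesis $n<2/\mu$ forces $\mu<2/n\le1$, so the strips $|s_i-s_j|<\varepsilon$ contribute $O(\varepsilon^{1-\mu})=o(1)$; for $n=1$ there are no such terms). The point you flag as the ``one real obstacle'' — the face $u=0$ in \eqref{mom2} — is in fact a genuine issue for the continuum formulation, not merely a technicality: by \eqref{meank}--\eqref{covk} one has ${\bf Var}\,\bar{\omega}_{\mu,\varepsilon}(u)=O(1)$ for $u<\varepsilon$, so the weight $u^{-\mu n}$ is uncompensated there, and if $0\in\overline{I}$ and $\mu n\ge 1+\lambda_1$ (e.g.\ $\varphi\equiv1$ with $1\le\mu n<2$) the contribution of $\{u<\varepsilon\}$ is not a positive power of $\varepsilon$ but infinite, so the left-hand side of \eqref{mom2}, read as a genuine Lebesgue integral, is $+\infty$ for every fixed $\varepsilon$. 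The paper never meets this because its lattice starts at $s_1=\varepsilon$, so the region $(0,\varepsilon)$ is silently cut out — consistent with the discretized definition of the mass in \eqref{Msum}. So either adopt that discretized (or cut-off) reading of the left-hand side, or add the restriction $\mu n<1+\lambda_1$ when $0\in\overline{I}$; with that caveat your argument is complete, and no caveat is needed for \eqref{mom1}, where the bulk integrand already carries $s_i^{\mu n+\lambda_1}$ near $s_i=0$ and the boundary region contributes only $O(\varepsilon^{1+\lambda_1})$.
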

\begin{proof}
We will give the proof of \eqref{mom1}.
The idea is to discretize as in \eqref{Msum}. 
Let $N=|I|/\varepsilon,$ $\bar{\omega}_j\triangleq \bar{\omega}_{\mu, \varepsilon}(s_j),$
$s_j = \varepsilon j,$ $j=1\cdots N.$ As $\bar{\omega_1},\cdots, \bar{\omega}_N$
are jointly gaussian with zero mean, we have
\begin{equation}\label{intermedcomp}
{\bf{E}} \bigl[\varepsilon\sum_{j=1}^N \varphi(s_j)\,
e^{\bar{\omega}_j}\bigr]^n = \varepsilon^n \sum_{j_1\ldots j_n=1}^N
\varphi(s_{j_1})\cdots \varphi(s_{j_n}) e^{\frac{1}{2}{\bf Var}
\left(\bar{\omega}_{j_1}+\ldots+\bar{\omega}_{j_n}\right)}.
\end{equation}
Now, using \eqref{meank} and \eqref{covk},
\begin{align}
{\bf Var}
\left(\bar{\omega}_{j_1}+\ldots+\bar{\omega}_{j_n}\right) & = \sum\limits_{k=1}^n {\bf Var} \bar{\omega}_{j_k} + 2\sum\limits_{k<l=1}^n {\bf Cov}\bigl (
\bar{\omega}_{j_k}, \bar{\omega}_{j_l}  \bigr) =  2\mu\sum\limits_{k=1}^n\bigl(-\log\varepsilon+\kappa + \log s_{j_k}\bigr) + \nonumber \\
& + 
2\mu \sum\limits_{k<l=1}^n \bigl(-\log\varepsilon+\kappa-\log|s_{j_k}-s_{j_l}|+
\log|s_{j_k}|+\log|s_{j_l}|\bigr), \nonumber \\
& = \mu(-\log\varepsilon+\kappa)n(n+1) + 2\mu n \sum\limits_{k=1}^n\log s_{j_k} - 2\mu\sum\limits_{k<l=1}^n \log|s_{j_k}-s_{j_l}|.
\end{align}
It follows from \eqref{intermedcomp} that
\begin{equation} {\bf{E}}
\bigl[\varepsilon\sum_{j=1}^N \varphi(s_j) e^{\bar{\omega}_j}\bigr]^n
 \thicksim  e^{\mu(-\log\varepsilon+\kappa)\frac{n(n+1)}{2}}\,\varepsilon^n\sum_{j_1\ldots j_n=1}^N \prod\limits_{k=1}^n \bigl[\varphi(s_{j_k})
s_{j_k}^{\mu n}\bigr] 
\prod_{k<l=1}^n |s_{j_k}-s_{j_l}|^{-\mu}.
\end{equation}
In the limit we have $\varepsilon \sum_{j_k=1}^N \rightarrow
\int_I ds_k,$ hence the result. The proof of \eqref{mom2}
follows by re-labeling $\varphi(u)\rightarrow u^{-\mu n} \varphi(u).$
\qed
\end{proof}
We note that the same type of argument gives a formula for the joint moments.
\begin{lemma}[Joint moments]\label{joint}
Let $N=n+m,$ $N<2/\mu,$ 
and $I_1,I_2$ be subintervals of $[0, 1].$ Then, as $\varepsilon\rightarrow 0,$ 
\begin{align}
{\bf E} \Bigl[\Bigl(\int\limits_{I_1} \varphi_1(u)\,e^{\bar{\omega}_{\mu, \varepsilon}(u)}
\,du\Bigr)^n \Bigl(\int\limits_{I_2} \varphi_2(u)\,e^{\bar{\omega}_{\mu, \varepsilon}(u)}
\,du\Bigr)^m\Bigr] & \thicksim e^{\mu(-\log\varepsilon+\kappa)\frac{N(N+1)}{2}} 
\int\limits_{I_1^n\times I_2^m} \prod_{i=1}^n
\bigl[\varphi_1(s_i)s_i^{\mu\,N}\bigr] \nonumber \\ & \times \prod_{i=n+1}^N
\bigl[\varphi_2(s_i)s_i^{\mu\,N}\bigr]\prod\limits_{i<j}^N
|s_i-s_j|^{-\mu} ds_1\cdots ds_N.\label{jointmomentsformula}
\end{align} 
\end{lemma}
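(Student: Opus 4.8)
The plan is to mimic, almost verbatim, the discretization argument in the proof of Lemma~\ref{single}; the only genuinely new ingredient is the bookkeeping of which grid points lie in $I_1$ and which in $I_2$. The decisive observation is that $\bar{\omega}_{\mu,\varepsilon}$, defined by \eqref{centeredomega} with the re-normalized $\omega_{\mu,\varepsilon}$ of \eqref{meank}--\eqref{covk}, is a single Gaussian field on $[0,1]$ with zero mean and a covariance that depends only on the positions of the sample points, not on which subinterval they come from. Consequently the variance of any finite linear combination of its values is computed by exactly the formula obtained in Lemma~\ref{single}, whether the points fall in $I_1$, in $I_2$, or in the overlap $I_1\cap I_2$.

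Concretely, I would set $N_1\triangleq|I_1|/\varepsilon$, $N_2\triangleq|I_2|/\varepsilon$, write $s_j\triangleq\varepsilon j$ for the grid points, and, as in \eqref{Msum}, replace the two integrals by the Riemann sums $\varepsilon\sum_j\varphi_1(s_j)e^{\bar{\omega}_{\mu,\varepsilon}(s_j)}$ over $I_1$ and $\varepsilon\sum_j\varphi_2(s_j)e^{\bar{\omega}_{\mu,\varepsilon}(s_j)}$ over $I_2$. Expanding the $n$th and $m$th powers and using that for jointly Gaussian zero-mean $\bar{\omega}_{j_1},\ldots,\bar{\omega}_{j_N}$ one has ${\bf E}\bigl[e^{\bar{\omega}_{j_1}+\cdots+\bar{\omega}_{j_N}}\bigr]=\exp\bigl(\tfrac12{\bf Var}(\bar{\omega}_{j_1}+\cdots+\bar{\omega}_{j_N})\bigr)$, the discretized expectation becomes
\[
\varepsilon^{N}\sum_{j_1,\ldots,j_n\in I_1}\ \sum_{j_{n+1},\ldots,j_N\in I_2}\ \prod_{k=1}^{n}\varphi_1(s_{j_k})\prod_{k=n+1}^{N}\varphi_2(s_{j_k})\,\exp\Bigl(\tfrac12{\bf Var}\Bigl(\sum_{k=1}^{N}\bar{\omega}_{j_k}\Bigr)\Bigr).
\]
Evaluating the variance by the same computation as in Lemma~\ref{single}, using \eqref{meank}--\eqref{covk}, gives $\mu(-\log\varepsilon+\kappa)N(N+1)+2\mu N\sum_{k=1}^{N}\log s_{j_k}-2\mu\sum_{k<l}^{N}\log|s_{j_k}-s_{j_l}|$; exponentiating produces exactly the prefactor $e^{\mu(-\log\varepsilon+\kappa)N(N+1)/2}$, the weights $\prod_k s_{j_k}^{\mu N}$, and the factor $\prod_{k<l}|s_{j_k}-s_{j_l}|^{-\mu}$ on the right of \eqref{jointmomentsformula}.

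It then remains to let $\varepsilon\to0$. The off-diagonal part of the multi-sum is a Riemann sum that converges to the generalized Selberg integral $\int_{I_1^n\times I_2^m}\prod_{i=1}^{n}[\varphi_1(s_i)s_i^{\mu N}]\prod_{i=n+1}^{N}[\varphi_2(s_i)s_i^{\mu N}]\prod_{i<j}^{N}|s_i-s_j|^{-\mu}\,ds_1\cdots ds_N$, which is finite precisely because $N<2/\mu$. The step I expect to require the most care --- and which, as in Lemma~\ref{single}, is where the real content sits --- is verifying that the terms with coinciding or near-neighbour indices (mutual distance $<\varepsilon$) contribute a vanishing fraction in the limit, so that the truncated form of the covariance in \eqref{covk} for $|s_{j_k}-s_{j_l}|<\varepsilon$ plays no role; this is the same insensitivity to the choice of truncation already discussed after \eqref{Msum}. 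One finishes by noting that, after taking expectations, the Riemann sums differ from the integrals $\int_{I_i}\varphi_i(u)e^{\bar{\omega}_{\mu,\varepsilon}(u)}du$ by a negligible amount, which yields \eqref{jointmomentsformula}.
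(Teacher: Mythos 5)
Your proposal is correct and follows essentially the same route as the paper: the paper proves Lemma \ref{joint} by noting that "the same type of argument" as in Lemma \ref{single} applies, \emph{i.e.} exactly the discretization of \eqref{Msum}, the Gaussian variance computation from \eqref{meank}--\eqref{covk} (now with $N=n+m$ points, which is what produces the uniform weights $s_i^{\mu N}$ and the prefactor $e^{\mu(-\log\varepsilon+\kappa)N(N+1)/2}$), and the passage to the Riemann-sum limit. Your added remark on the negligibility of near-diagonal terms is consistent with the paper's own (heuristic) treatment of truncation insensitivity following \eqref{Msum}.
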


We can now give a proof of Theorem \ref{general} and explain the origin of Conjecture \ref{Mellinvar}.
\begin{proof} {\bf of Theorem \ref{general}.}
Let $\Re(p)<2/\mu.$ 
By Lemma \ref{Girsanov},
\begin{equation}\label{qid}
\lim\limits_{\varepsilon\rightarrow 0} e^{\mu(\log\varepsilon-\kappa) \frac{q(q+1)}{2}} 
{\bf E}\Bigl[e^{-q\omega_{\mu,\varepsilon}(0)}\Bigl(\int_{I} u^{-\mu q}\,\varphi(u)\, e^{\omega_{\mu,\varepsilon}(u)}du\Bigr)^p\Bigr] = {\bf E} \Bigl[ \Bigl(\int_{I} \varphi(u)\, M_\mu(du)\Bigr)^p\Bigr].
\end{equation}
In particular, \eqref{qid} holds for $q=p,$ if $\Re(q)<2/\mu.$
On the other hand,
the left-hand side of \eqref{qid} can be reduced when $q=p$ as follows.
Substituting the definition of $\bar{\omega}_{\mu,\varepsilon}(u),$ cf. \eqref{centeredomega}, we obtain
\begin{align}
\lim\limits_{\varepsilon\rightarrow 0} e^{\mu(\log\varepsilon-\kappa) \frac{q(q+1)}{2}} &
{\bf E}\Bigl[e^{-q\omega_{\mu,\varepsilon}(0)}\Bigl(\int_{I} u^{-\mu q}\,\varphi(u)\,e^{\omega_{\mu,\varepsilon}(u)}du\Bigr)^q\Bigr], \nonumber \\
& = \lim\limits_{\varepsilon\rightarrow 0}  e^{\mu(\log\varepsilon-\kappa) \frac{q(q+1)}{2}} {\bf E} \Bigl[\Bigl(\int_{I} u^{-\mu q}\,\varphi(u)\,e^{\bar{\omega}_{ \mu,\varepsilon}(u)}du\Bigr)^q\Bigr]. 
\end{align}
The argument for \eqref{gen1} is essentially the same except that we use 
\eqref{girs2} instead of \eqref{qid}. Given $\Re(q)>-(1+\lambda_1)/\mu-1/2,$ $\Re(p)<\min\{2/\mu, \,2/\mu(1+\lambda_1)+1+2\Re(q)\},$ 
if $q=p$ the condition $\Re(p)<\min\{2/\mu, \,2/\mu(1+\lambda_1)+1+2\Re(q)\}$ is equivalent to $\Re(q)>-(1+\lambda_1)2/\mu-1$ and is so satisfied.  
\qed
\end{proof}
\begin{proof} {\bf of Conjecture \ref{Mellinvar}.} Let $\varphi(u) =1$ and $I=[0,\,1]$ in Theorem \ref{general}. Recalling the asymptotic equality of laws of $\bar{\omega}_{ \mu,\varepsilon}(u)$ and $S^{(1)}_t(\mu, u, \varepsilon)$ at finite $\varepsilon>0$ in the limit $t\rightarrow \infty$ that we established in \eqref{S1approxlaw} above, we formally 
interchange the order of $t$ limit and $u$ integration, take the $t$ limit by Corollary \ref{covstructureS}, then replace  $S^{(1)}(\mu, u, \varepsilon)$ with $\bar{\omega}_{ \mu,\varepsilon}(u)$ by \eqref{S1approxlaw}, 
and, finally take the limit $\varepsilon\rightarrow 0$ by Theorem \ref{general}. The right-hand sides of these equations 
are known by Conjecture \ref{myresult} and described in \eqref{Mfunc}, with $\lambda_1=\lambda_2=0,$ and  
$\lambda_1=\mu q,$ $\lambda_2=0,$ respectively, cf. Theorem \ref{BSM} above, hence \eqref{M2} and \eqref{M1}. For example, the argument for \eqref{M2} is as follows.
\begin{align}
&\lim\limits_{\varepsilon\rightarrow 0} e^{\mu(\log \varepsilon-\kappa)\frac{q(q+1)}{2}} \Bigl[\lim\limits_{t\rightarrow\infty} 
{\bf E} \Bigl[\Bigl(\int_0^1 u^{-\mu q} e^{S^{(1)}_t(\mu,u,\varepsilon)} du\Bigr)^q\Bigr]\Bigr], \nonumber \\ 
&= \lim\limits_{\varepsilon\rightarrow 0} e^{\mu(\log \varepsilon-\kappa)\frac{q(q+1)}{2}} \Bigl[
{\bf E} \Bigl[\Bigl(\int_0^1 u^{-\mu q} e^{S^{(1)}(\mu,u,\varepsilon)} du\Bigr)^q\Bigr]\Bigr], \nonumber \\ 
& = \lim\limits_{\varepsilon\rightarrow 0} e^{\mu(\log \varepsilon-\kappa)\frac{q(q+1)}{2}} \Bigl[
{\bf E} \Bigl[\Bigl(\int_0^1 u^{-\mu q} e^{\bar{\omega}_{\mu,\varepsilon}(u)} du\Bigr)^q\Bigr]\Bigr], \nonumber \\ 
& = {\bf E} \Bigl[ \Bigl(\int_0^1 M_\mu(du)\Bigr)^q\Bigr].
\end{align}

The derivation of the weak conjecture for $S^{(2)}_t(\mu, u, \varepsilon)$ follows the same steps but is simpler as it is based directly on the asymptotic equality of laws of $\omega_{\mu,\varepsilon}(u)$ and $S^{(2)}_t(\mu, u, \varepsilon)$ at finite 
$\varepsilon>0$ in the limit $t\rightarrow \infty,$ cf. \eqref{S2approxlaw} above. By taking the exponential functional
of $S^{(2)}_t(\mu, u, \varepsilon)$ as in \eqref{M2transfstrong} and using \eqref{S2approxlaw}, we obtain the
Mellin transform of the total mass of the limit lognormal measure times the Mellin transform of $\exp\bigl(\mathcal{N}(
-(\mu/2)(\log\varepsilon-\kappa), -\mu(3\log\varepsilon-\kappa))\bigr).$ The latter accounts for the scaling factor
in \eqref{M2transfstrong}, and the result then follows from Conjecture \ref{myresult} with $\lambda_1=\lambda_2=0.$ 
Similarly, \eqref{M1transfstrong} follows in the same way from Conjecture \ref{myresult} with 
$\lambda_1=\mu q,$ $\lambda_2=0.$
\qed
\end{proof}

We next proceed to the derivation of Conjecture \ref{Mellin}. The basic idea is
to let $\varepsilon(t)$ approach zero ``slowly'' compared to the growth of $t$ to infinity so that the statistic $S^{(i)}_t(\mu, u)$
in \eqref{Sut} behaves as the centered gaussian free field for $i=1$ or the gaussian free field plus an independent gaussian for $i=2$ at the scale $\varepsilon(t).$ The main technical challenge of quantifying the required
rate of decay of $\varepsilon(t)$ is that neither Theorem
\ref{strong} nor \ref{divergvar} applies to the statistic $S^{(i)}_t(\mu, u).$
Theorem \ref{strong} does not apply as the variance of $S^{(i)}_t(\mu, u)$ is divergent in the limit $t\rightarrow \infty .$ Theorem \ref{divergvar} does not apply as the asymptotic of the variance $\int_{-\log t/\lambda(t)}^{\log t/\lambda(t)} |w| |\widehat{\chi_u}(w)|^2\,dw\propto \log (\log t/\lambda(t))$ in \eqref{limitvar} is different from the asymptotic of our variance
$\int\limits_{-\infty}^{\infty} |w| |\widehat{(\chi_u\star\phi_\varepsilon)}(w)|^2\,dw\propto -\log \varepsilon,$
cf. Lemmas \ref{main} and \ref{main2} above.
Instead, we need a slight modification of Theorem \ref{divergvar}. Lemma \ref{modified} shows
that for a sufficiently slowly decaying $\varepsilon(t)$ one can obtain a limiting gaussian field having $\varepsilon(t)$-dependent asymptotic covariance. 
Lemma \ref{limit} shows further that the limiting covariance can be approximated by the scalar product in Theorem \ref{strong}. 
Let $S_t(f)$ be as in \eqref{St}. For simplicity,
we restrict ourselves here to finite linear combinations of indicator functions
$\sum c_k \chi_{u_k},$ $0<u_k<1.$
\begin{lemma}[Modified convergence]\label{modified}
Let $\lambda(t)$ satisfy \eqref{lt} and $\varepsilon(t)$ satisfy \eqref{vart} for $i=1$ or \eqref{vart2} for $i=2,$ respectively.
Let $\chi^{(1)}_u(x)=\chi_{[0,\,u]}(x)$ and $\chi^{(2)}_u(x)=\chi_{[-1/\varepsilon(t),\,u]}(x).$ Define
\begin{equation}\label{mylimitvar}
\sigma_t^{(i)}{}^2 \triangleq \int\limits_{-\log t/\lambda(t)}^{\log t/\lambda(t)} |w| \Big|\sum c_k \widehat{ (\chi^{(i)}_{u_k}\star\phi_{\varepsilon(t)})}(w)\Big|^2\,dw,
\end{equation}
then, as $t\rightarrow \infty,$ 
\begin{equation}\label{limitofSt}
S_t\Bigl(\sum c_k \chi^{(i)}_{u_k}\star\phi_{\varepsilon(t)}\Bigr)  \overset{{\rm in \,law}}{=} \sigma_t^{(i)} \,Y^{(i)}_t + o(1), \; 
Y^{(i)}_t  \overset{{\rm in \,law}}{\rightarrow} \mathcal{N}(0, 1).
\end{equation}
\end{lemma}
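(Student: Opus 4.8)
The plan is to adapt the proof of Theorem~\ref{divergvar} from \cite{BK} to the $t$-dependent test function $f_t \triangleq \sum_k c_k\,(\chi^{(i)}_{u_k}\star\phi_{\varepsilon(t)})$, making every error estimate there uniform in the rate at which $f_t$ concentrates. Since $f\mapsto S_t(f)$ is linear it suffices to treat $S_t(f_t)$ as a single real random variable, and the route to \eqref{limitofSt} is the method of cumulants: show that $\mathbf{Var}\,S_t(f_t)$ is asymptotic to $\sigma_t^{(i)}{}^2$ of \eqref{mylimitvar}, that the cumulants of $S_t(f_t)$ of order $\ge 3$ are of smaller order than the corresponding power of $\sigma_t^{(i)}$, and that the arithmetic remainder terms in the Bourgade--Kuan expansion of $S_t$ contribute $o(1)$ additively; the last point is what produces the $+o(1)$ in \eqref{limitofSt}.

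First I would record the sizes of the norms of $f_t$ that enter the Bourgade--Kuan error bounds. For $i=1$ the function $f^{(1)}_{\varepsilon(t),u}$ is supported in a fixed interval with $\|f^{(1)}_{\varepsilon(t),u}\|_1=O(1)$, and by \eqref{fuprime1} its derivative is a difference of two bumps of height $O(1/\varepsilon(t))$ and width $O(\varepsilon(t))$, so $\|f'^{(1)}_{\varepsilon(t),u}\|_1=O(1)$ and the total variation of $f^{(1)}_{\varepsilon(t),u}$ is $O(1)$. For $i=2$ the only change is $\|f^{(2)}_{\varepsilon(t),u}\|_1=O(1/\varepsilon(t))$, which is exactly why the extra hypothesis $1/\lambda(t)\ll\varepsilon(t)$ in \eqref{vart2} is needed: it forces $\|f_t\|_1/\lambda(t)\to 0$, so that the $O(1/\lambda(t))$--type remainders (cf.\ the discussion around \eqref{resultS}) still vanish. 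Next I would evaluate $\sigma_t^{(i)}{}^2$. Since $\widehat{f_t}(w)=2\pi\,\widehat{\phi}(\varepsilon(t)w)\sum_k c_k\widehat{\chi^{(i)}_{u_k}}(w)$ is concentrated on $|w|\lesssim 1/\varepsilon(t)$ with $|w|\,|\widehat{\chi^{(i)}_{u_k}}(w)|^2$ behaving on average like $1/|w|$ up to the scale $1/\varepsilon(t)$, and since \eqref{vart}--\eqref{vart2} give $1/\varepsilon(t)\ll\log t/\lambda(t)$, the cutoff at $\log t/\lambda(t)$ removes only a super-polynomially small tail; hence $\sigma_t^{(i)}{}^2$ coincides, up to $O(\varepsilon(t))$, with the scalar product $\langle f_t,f_t\rangle$ computed in Lemmas~\ref{main} and \ref{main2}, and in particular $\sigma_t^{(i)}{}^2 \sim c\,|\log\varepsilon(t)|\to\infty$.

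With these inputs the Bourgade--Kuan machinery goes through as in the fixed-$f$ case: Selberg's formula for $\zeta'/\zeta$ together with the Guinand--Weil explicit formula writes $S_t(f_t)$ as a sum over prime powers weighted by values of $\widehat{f_t}$, plus remainders bounded in terms of $\|f_t\|_1$, $\|f_t'\|_1$ and the rapid decay of $\widehat{f_t}$. The prime-power sum has variance asymptotic to $\sigma_t^{(i)}{}^2$, and, because the bounded arithmetic factors do not alter orders of magnitude, its normalized cumulant of order $k\ge 3$ is $o\!\big((\sigma_t^{(i)})^{k}\big)/(\sigma_t^{(i)})^k\to 0$; a convenient uniform bound is $O\!\big((\sigma_t^{(i)}{}^2)^{1-k/2}\big)=O\!\big(|\log\varepsilon(t)|^{\,1-k/2}\big)$. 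The method of moments then yields $S_t(f_t)/\sigma_t^{(i)}\overset{\mathrm{law}}{\to}\mathcal N(0,1)$, and collecting the remainders, all $o(1)$ by the norm bounds above and \eqref{vart}--\eqref{vart2}, gives \eqref{limitofSt}.

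The main obstacle I anticipate is precisely this uniformity. Theorem~\ref{divergvar} is stated for a fixed test function, so one has to re-examine each step of its proof---the truncation of the explicit formula, the contribution of zeros near the real axis, and the passage from prime sums to integrals---and verify that the implied constants, once divided by $\sigma_t^{(i)}$, still tend to zero when the test function is allowed to concentrate at rate $\varepsilon(t)$. This is bookkeeping rather than a new idea, and the ``slow decay'' conditions \eqref{vart} and \eqref{vart2} are tailored to make it work: they simultaneously force $\sigma_t^{(i)}\to\infty$, so that a central limit theorem holds at all, and keep $\|f_t\|_1/\lambda(t)$ together with the remaining error ratios tending to zero, so that the limit is not destroyed.
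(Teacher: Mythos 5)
Your proposal is correct and follows essentially the same route as the paper's own argument: the appendix proof likewise adapts the Bourgade--Kuan proof of Theorem~\ref{divergvar} verbatim, tracking the $\varepsilon(t)$-dependence of $\|f_t\|_1$, $\|f_t'\|_1$, $\|f_t''\|_1$ and the Fourier bounds, reducing $S_t(f_t)$ via the approximate explicit formula to a prime sum whose asymptotic normality is taken from BK's Lemmas 4--5 and Proposition 6 (which is the moment/cumulant verification you describe), with the $i=2$ case handled exactly by your observation that $\|f_t\|_1/\lambda(t)=o(1)$, i.e.\ the condition $\varepsilon(t)\lambda(t)\gg 1$ in \eqref{vart2}, replaces the uniform $L^1$ bound.
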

The proof is sketched in the appendix.
Let $f^{(i)}_{\varepsilon, u}(x) \triangleq (\chi^{(i)}_u\star\phi_\varepsilon)(x)$ be as in Lemmas \ref{main} and \ref{main2}. It follows by linearity that the covariance has the asymptotic form as $t\rightarrow \infty$
\begin{equation}\label{mylimitcovar}
{\bf Cov}\Bigl(S_t(f^{(i)}_{\varepsilon(t), u}), \, S_t(f^{(i)}_{\varepsilon(t), v})\Bigr) = \Re\int\limits_{-\log t/\lambda(t)}^{\log t/\lambda(t)} |w| \widehat{f^{(i)}_{\varepsilon(t), u}}(w) \overline{\widehat{f^{(i)}_{\varepsilon(t), v}}(w)}\,dw + o(1).
\end{equation}
It remains to show that this asymptotic 
is the same as that of the covariance in \eqref{covar} and \eqref{var} for $i=1$ and 
\eqref{covar2} and \eqref{var2} for $i=2.$ This is shown in the next lemma.
\begin{lemma}[Limit covariance]\label{limit}
Let $\lambda(t)$ satisfy \eqref{lt} and $\varepsilon(t)$ satisfy \eqref{vart} for $i=1$ or \eqref{vart2} for $i=2,$ respectively,
and the scalar product be as in \eqref{scalarf}. In the limit $t\rightarrow\infty,$
\begin{equation}
\Re\int\limits_{-\log t/\lambda(t)}^{\log t/\lambda(t)} |w| \widehat{f^{(i)}_{\varepsilon(t), u}}(w) \overline{\widehat{f^{(i)}_{\varepsilon(t), v}}(w)}\,dw = \langle f^{(i)}_{\varepsilon(t), u},\,f^{(i)}_{\varepsilon(t), v}\rangle
+ o(1).
\end{equation}
\end{lemma}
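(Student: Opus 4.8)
The plan is to note first that, since the scalar product of \eqref{scalarf} is an integral over all of $\mathbb{R}$, the difference between the two sides of the asserted identity is precisely the tail
\[
\Re\!\!\int\limits_{|w|>T}\!\!|w|\,\widehat{f^{(i)}_{\varepsilon(t),u}}(w)\,\overline{\widehat{f^{(i)}_{\varepsilon(t),v}}(w)}\,dw,\qquad T\triangleq\frac{\log t}{\lambda(t)},
\]
and the whole task is to show that this vanishes as $t\rightarrow\infty$. I would then factor the Fourier transform through the convolution $f^{(i)}_{\varepsilon,u}=\chi^{(i)}_u\star\phi_\varepsilon$: with the normalization of \eqref{scalarf}, $\widehat{f^{(i)}_{\varepsilon,u}}(w)=c\,\widehat{\chi^{(i)}_u}(w)\,\widehat{\phi}(\varepsilon w)$ for a fixed constant $c$, using $\widehat{\phi_\varepsilon}(w)=\widehat{\phi}(\varepsilon w)$.

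Next I would invoke the elementary bound $|\widehat{\chi^{(i)}_u}(w)|\le C/|w|$ for $|w|\ge 1$ with $C$ independent of $t$: for $i=1$ this is immediate since $\widehat{\chi_{[0,u]}}(w)=(1-e^{-iwu})/(2\pi iw)$, and for $i=2$, although the interval $[-1/\varepsilon(t),u]$ varies with $t$, the transform $\widehat{\chi_{[-1/\varepsilon(t),u]}}(w)=(e^{iw/\varepsilon(t)}-e^{-iwu})/(2\pi iw)$ still has numerator of modulus at most $2$, so the bound is uniform in $t$. Since $\phi\in\mathcal{C}^\infty_c$, its transform $\widehat{\phi}$ is Schwartz, so $|\widehat{\phi}|^2$ is integrable and $\int_1^\infty v^{-1}|\widehat{\phi}(v)|^2\,dv<\infty$. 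As $T\rightarrow\infty$ by \eqref{lt}, for all large $t$ the tail integral runs only over $|w|\ge 1$, where the integrand is dominated by $C^2|w|^{-1}|\widehat{\phi}(\varepsilon(t)w)|^2$; substituting $v=\varepsilon(t)w$ gives
\[
\Bigl|\Re\!\!\int\limits_{|w|>T}\!\!|w|\,\widehat{f^{(i)}_{\varepsilon(t),u}}(w)\,\overline{\widehat{f^{(i)}_{\varepsilon(t),v}}(w)}\,dw\Bigr|\;\le\;2C^2\!\!\int\limits_{\varepsilon(t)T}^{\infty}\frac{|\widehat{\phi}(v)|^2}{v}\,dv .
\]

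Finally I would observe that $\varepsilon(t)\,T=\varepsilon(t)\log t/\lambda(t)\rightarrow\infty$, which is exactly the left-hand asymptotic relation $\lambda(t)/\log t\ll\varepsilon(t)$ in \eqref{vart} for $i=1$ and in \eqref{vart2} for $i=2$; hence the right-hand side above, being the tail of a convergent integral whose lower limit tends to $+\infty$, is $o(1)$, which proves the lemma. The step I expect to require the most care is verifying that the bound on $\widehat{\chi^{(i)}_u}$ holds uniformly in $t$ when $i=2$, since there the test function itself depends on $t$; but as noted the $t$-dependence enters only through a unimodular phase in the numerator, so no input beyond \eqref{lt} and \eqref{vart2} is needed.
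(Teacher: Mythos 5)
Your argument is correct, but it takes a genuinely different route from the paper. You stay entirely on the Fourier side: you observe that the discrepancy is exactly the tail $\Re\int_{|w|>\log t/\lambda(t)}|w|\widehat{f}\,\overline{\widehat{g}}\,dw$, factor $\widehat{f^{(i)}_{\varepsilon,u}}(w)=2\pi\,\widehat{\chi^{(i)}_u}(w)\,\widehat{\phi}(\varepsilon w)$ through the convolution, use the uniform bound $|\widehat{\chi^{(i)}_u}(w)|\le C/|w|$ (valid for $i=2$ as well, since the $t$-dependence sits in a unimodular phase), and then exploit the Schwartz decay of $\widehat{\phi}$ to reduce everything to the tail $\int_{\varepsilon(t)\log t/\lambda(t)}^{\infty}v^{-1}|\widehat{\phi}(v)|^{2}\,dv$, which vanishes because $\varepsilon(t)\log t/\lambda(t)\rightarrow\infty$ by \eqref{vart}, resp. \eqref{vart2}. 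The paper instead works in real space: it passes to the representation \eqref{scalar} via the derivatives $f'_\varepsilon=\phi_\varepsilon(\cdot)-\phi_\varepsilon(\cdot-u)$, uses the Frullani integral $\int_0^\infty w^{-1}\bigl(\cos w-\cos(w(y-x))\bigr)dw=\log|y-x|$ to identify the full scalar product, and then bounds the remainder $\int_{\alpha}^{\infty}w^{-1}\bigl[\iint f'_\varepsilon g'_\varepsilon\cos(w(y-x))\,dx\,dy\bigr]dw$ by a single integration by parts, obtaining the explicit rate $O\bigl(\lambda(t)/\varepsilon(t)\log t\bigr)$ in \eqref{mainestimate}. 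What each approach buys: the paper's quantitative remainder is reused later (in the remark on \eqref{theassumption} and in the Section 2 discussion of the non-commuting limits), so if you adopt your argument you should note that it in fact yields an even stronger, rapidly decaying bound in $\varepsilon(t)\log t/\lambda(t)$ (since $\widehat{\phi}$ is Schwartz), from which the $O\bigl(\lambda(t)/\varepsilon(t)\log t\bigr)$ statement follows a fortiori; your route is also shorter, avoids the Frullani identity, and treats the $t$-dependent interval in the $i=2$ case more transparently than the paper's ``same as $i=1$'' remark.
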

\begin{proof} {\bf of Lemma \ref{limit}.}
Given the equality between the representations of the scalar product in \eqref{scalarf} and \eqref{scalar}, we need to estimate
$\Re\int_{|w| >\log t/\lambda(t)}|w| \widehat{f^{(i)}_{\varepsilon(t), u}}(w) \overline{\widehat{f^{(i)}_{\varepsilon(t), v}}(w)}\,dw.$ Denote $\alpha \triangleq
\log t/\lambda(t),$ $f_\varepsilon(x)\triangleq f^{(i)}_{\varepsilon, u}(x),$ and $g_\varepsilon(x) \triangleq
f^{(i)}_{\varepsilon, v}(x).$ Then, by substituting the definition of the Fourier transform and integrating by parts,
\begin{equation}
\Re\int\limits_{-\alpha}^\alpha |w| \hat{f}_\varepsilon(w)\overline{\hat{g}_\varepsilon(w)}\,dw  = 
-\frac{1}{2\pi^2} \int\limits_{0}^\alpha \frac{dw}{w}  \Bigl[\iint f'_\varepsilon(x)\,g'_\varepsilon(y)  
\bigl(\cos(wy)-\cos(w(y-x))\bigr) dxdy\Bigr].
\end{equation}
The $\cos(wy)$ term can be replaced with $\cos(w)$ when integrating from 0 to
$\alpha$ and dropped when integrating from $\alpha$ to infinity 
because $\int f'_\varepsilon(x)\,dx=0.$ 
Using the Frullani integral in the form
\begin{equation}
\int\limits_{0}^\infty \frac{dw}{w} 
\bigl(\cos(w)-\cos(w(y-x))\bigr) = \log|y-x|,
\end{equation}
we wish to show that the remainder term satisfies the
estimate in the limit
$\alpha\rightarrow\infty$ and $\varepsilon\rightarrow 0,$
\begin{equation}\label{mainestimate}
\int\limits_{\alpha}^\infty \frac{dw}{w} \Bigl[
\iint f'_\varepsilon(x)\,g'_\varepsilon(y) 
\cos(w(y-x))dxdy\Bigr]  = O(1/\alpha\,\varepsilon).
\end{equation}
Let $i=1$ for concreteness. Recalling $f'_\varepsilon(x)=\phi_\varepsilon(x)-\phi_\varepsilon(x-u),$
$g'_\varepsilon(y)=\phi_\varepsilon(y)-\phi_\varepsilon(y-v),$ and the assumption that $\phi(x)$ is compactly supported so that
the only values of $x$ and $y$ that contribute to these integrals are concentrated in intervals of size $\varepsilon,$ it is sufficient to
show
\begin{align}
\int\limits_{\alpha}^\infty \frac{dw}{w} \Bigl[
\int \phi_\varepsilon(x)\,
\cos(w\,x)dx\Bigr]  = & \int\limits_{\alpha\varepsilon}^\infty \frac{dw}{w} \Bigl[\int \phi(x)\,
\cos(w\,x)dx\Bigr] \nonumber \\
= & O(1/\alpha\,\varepsilon),
\end{align}
which follows by integrating the inner integral by parts.
\qed
\end{proof}

\begin{remark}
One concludes that the statistic $S^{(1)}_t(\mu, u)$ and the centered gaussian
free field $\bar{\omega}_{\mu, \varepsilon(t)}(u)$ have asymptotically the same law,
as do the statistic $S^{(2)}_t(\mu, u)$ and the gaussian free field plus an independent gaussian,
as in \eqref{S1approxlaw} and \eqref{S2approxlaw} with the scale $\varepsilon(t).$ The difference between 
$S^{(1)}_t(\mu, u)$ and $S^{(2)}_t(\mu, u)$ on the one hand and the corresponding free fields on the other 
is that these statistics are only gaussian in the limit $t \rightarrow \infty,$ whereas the free fields are gaussian 
for $\varepsilon>0.$ We will assume that deviations of these statistics from their gaussian limits can be ignored 
for sufficiently large $t$ for the purpose of carrying out calculations. 
We can quantify this assumption in terms of the behavior of $Y_t^{(i)}$ near its gaussian limit in \eqref{limitofSt},
by conjecturing the asymptotic of $S^{(i)}_t(\mu, u)$ to be
\begin{equation}\label{theassumption}
S^{(i)}_t(\mu, u) \overset{{\rm in \,law}}{=} \sigma^{(i)}_t \mathcal{N}(0,\,1)+o(1),
\end{equation}
which means that $Y^{(i)}_t$ converges to $\mathcal{N}(0,\,1)$ faster (in the sense of variance) than 
$\sigma^{(i)}_t{}^2\propto -\log\varepsilon (t)$ goes to infinity. We can heuristically estimate the rate
of convergence of $Y^{(i)}_t$ to $\mathcal{N}(0,\,1)$ by the relative error in the formula for the
variance of our statistics. It follows from \eqref{mainestimate} in the proof of Lemma \ref{limit}
that the relative error is 
$O\bigl(\lambda(t)/\log t\,\varepsilon(t)\bigr)/|\log\varepsilon (t)|$
so that the rate of convergence $\ll 1/|\log\varepsilon (t)|$ by \eqref{vart} as required for \eqref{theassumption} to be true. 
Obviously, the assumption in \eqref{theassumption} is a major gap between the weak and strong conjectures. 
\end{remark}
\begin{proof} {\bf of Conjecture \ref{Mellin}.} 
The argument is the same as that for Conjecture \ref{Mellinvar} above except that we use Lemma \ref{modified} instead of Theorem \ref{strong} and
Lemma \ref{limit} instead of Lemmas \ref{main} and \ref{main2}, and, in addition, assume
that $S^{(i)}_t(\mu, u)$ is near-gaussian for large but finite $t$ as remarked above in \eqref{theassumption}.
\qed
\end{proof}

The proofs of the corollaries are given below. They are the same for
either the single or double limit on the left-hand side. For concreteness, they are stated for the single limit as in Section 2.
\begin{proof} {\bf of Corollary \ref{posmoments}.}
We computed the positive moments and verified \eqref{selberg2} 
in Lemma \ref{single}. On the other hand, using
\eqref{M2}, \eqref{mom2plus}
follows from the known formula for the positive integral moments of the Selberg integral distribution, cf. \eqref{posmomentformula}. The argument for \eqref{selberg1} and \eqref{mom1plus} is the same.
\qed
\end{proof}
\begin{proof} {\bf of Corollary \ref{negmoments}.}
This again follows from Conjecture \ref{Mellin} and the known formula for the negative integral moments of the Selberg integral distribution, cf. \eqref{negmomentformula}.
\qed
\end{proof}
\begin{proof} {\bf of Corollary \ref{jointintegralmom}.}
This is a special case of Lemma \ref{joint}.
\qed
\end{proof}
\begin{proof} {\bf of Corollary \ref{asymptotic}.}
\eqref{asymptotic2} is a special case of \eqref{asymptoticgeneral}
corresponding to $\lambda_1=\lambda_2=0.$ \eqref{asymptotic1} 
is a special case of the following asymptotic expansion in the limit $\mu\rightarrow 0,$
\begin{gather}
\mathfrak{M}(q\,|\,\mu,\mu\,z,0) \thicksim
\exp\Bigl(\sum\limits_{r=0}^\infty
\Bigl(\frac{\mu}{2}\Bigr)^{r+1}
\frac{1}{r+1}\Bigl[\zeta(r+1)\bigl[\frac{B_{r+2}(q+1)
+B_{r+2}(q)}{r+2} +\nonumber \\
+ \frac{B_{r+2}(q-2z)-B_{r+2}(-2z)-2B_{r+2}}{r+2} -q\bigr]  
+\bigl(\zeta(r+1)-1\bigr)\bigl[\frac{B_{r+2}(q-1-2z)-B_{r+2}(2q-1-2z)}{r+2}\bigr]\Bigr]\Bigr)
.\label{asymptoticz}
\end{gather}
This expansion coincides with the intermittency expansion of ${\bf E}\Bigl[\bigl(
\int_0^1 s^{\mu\,z} \,M_\mu(ds)\bigr)^q\Bigr]$ as one can verify by the methods
of the previous section. Another way of deriving \eqref{asymptoticz} is to
follow the argument that we used in \cite{MeIMRN} to derive \eqref{asymptoticgeneral} from
\eqref{Mfunc}. Then, \eqref{asymptotic1} corresponds to $z=q$ in \eqref{asymptoticz}.
\qed
\end{proof}
\begin{proof} {\bf of Corollary \ref{covstructure}.}
We will give the argument for the second statistic. We have by \eqref{S2approxlaw},
\begin{align}
{\bf Cov}\Bigl(\log \int\limits_{s_1}^{s_1+\Delta} e^{S^{(2)}_t(\mu,u)} du, \, \log \int\limits_{s_2}^{s_2+\Delta} e^{S^{(2)}_t(\mu,u)} du\Bigr) & \approx
{\bf Cov}\Bigl(\log \int\limits_{s_1}^{s_1+\Delta} e^{\omega_{\mu, \varepsilon(t)}(u)} du, \, \log \int\limits_{s_2}^{s_2+\Delta} e^{\omega_{\mu, \varepsilon(t)}(u)} du\Bigr)- \nonumber \\
& -\mu(3\log\varepsilon(t)-\kappa),
\end{align}
and the result follows by \eqref{covstrucmass} and the stationarity property of the limit lognormal measure.
The argument for the first statistic is similar but is more involved as it requires a generalization of  \eqref{covstrucmass}
for the centered gaussian free field, which follows from \eqref{jointmomentsformula} in the same way as \eqref{covstrucmass}
follows from \eqref{jointmomentlimitmeasure}. The details are straightforward and will be omitted.
\qed
\end{proof}
\begin{proof} {\bf of Corollary \ref{noncentral}.}
Given Conjecture \ref{Mellin}, these results follow from
Theorems \ref{BSM} and \ref{B2}.
\qed
\end{proof}
\begin{proof} {\bf of Corollary \ref{multifrac}.} 
Given the multifractal law of the limit lognormal measure, cf. \eqref{multifractallaw}, it is sufficient to
show the identity for any $0<s<1$
\begin{equation}
\lim\limits_{t\rightarrow\infty} e^{\mu(\log \varepsilon(t)-\kappa)\frac{q(q+1)}{2}} 
{\bf E} \Bigl[\Bigl(\int_0^s u^{-\mu q} e^{S^{(1)}_t(\mu,u)} du\Bigr)^q\Bigr]  = {\bf E} \Bigl[ \Bigl(\int_{0}^s M_\mu(du)\Bigr)^q\Bigr]. 
\end{equation}
which is a special case of \eqref{gen2} corresponding to $\varphi(u)=1$ and $I=[0, s]$ (by formally replacing  
$\bar{\omega}_{ \mu,\varepsilon}(u)$ with $S^{(1)}_t(\mu, u)$ as in the derivation
of Conjecture \ref{Mellin} above). 
\qed
\end{proof}
\begin{proof} {\bf of Corollary \ref{Multi}.} 
Given the multiscaling law of the limit lognormal measure, \eqref{multilaw2} follows from \eqref{multi}. To prove \eqref{multilaw1}, we need to generalize
\eqref{multifractallaw} to the following identity in law,
\begin{equation}\label{multifracq}
\int_0^s u^{\mu q}\,M_{\mu}(du) \overset{{\rm in \,law}}{=} s^{1+\mu q}\,e^{\Omega_s}\,\int_0^1 u^{\mu q}\,M_{\mu}(du),
\end{equation}
where $\Omega_s$ is as in \eqref{OmegaE} and \eqref{OmegaV},
which implies the multiscaling law
\begin{align}
{\bf E}\Bigl[\Bigl(\int_0^s u^{\mu q}\,M_{\mu}(du)\Bigr)^q\Bigr] & \propto 
{\bf E}\Bigl[\Bigl(s^{1+\mu q}\, e^{\Omega_s}\Bigr)^q\Bigr], \nonumber\\
& \propto s^{q+\frac{\mu}{2}(q^2+q)} 
\end{align}
as a function of $s<1.$ Finally, \eqref{multifracq} is a simple corollary 
of \eqref{invariance}. 
\qed
\end{proof}
\begin{remark}
It should be clear from Theorem \ref{BSM} and Theorem \ref{general} 
that one can obtain general explicit formulas for $\varphi(u)=u^{\lambda_1} (1-u)^{\lambda_2}.$ The reason that we restricted ourselves to $\lambda_1=\lambda_2=0$ in Conjectures \ref{Mellinvar} and \ref{Mellin} is simplicity.
The reader who is interested in the general case can easily find the desired formulas in Section 3. 
\end{remark}

\section{Conclusions}
\noindent We have formulated two versions of a precise conjecture on limits of rescaled Mellin-type transforms of the exponential functional of the Bourgade-Kuan-Rodgers statistic in the mesoscopic regime. The conjecture is based on our construction of  particular Bourgade-Kuan-Rodgers statistics of Riemann zeroes that converge to modifications of the centered gaussian or gaussian free fields. The statistics are defined by smoothing the indicator function of certain bounded or unbounded subintervals of the real line. The smoothing is effected by a rescaled bump function. In the weak version of the conjecture the asymptotic scale of the bump function $\varepsilon$ is fixed so that the resulting statistics $S^{(i)}_t(\mu, u, \varepsilon),$ $i=1,2,$
satisfy the Bourgade-Kuan-Rodgers theorem. In the strong version, this scale $\varepsilon(t)$ is chosen to be mesoscopic,
$\lambda(t)/\log t\ll\varepsilon(t)\ll1$ for $i=1$ and both $\lambda(t)/\log t\ll\varepsilon(t)\ll1$ and $\varepsilon(t)\gg 1/\lambda(t)$ for
$i=2,$  so that the statistics $S^{(i)}_t(\mu, u)$  satisfy an extension of the Bourgade-Kuan-Rodgers theorem that we formulated in the paper. We have computed the double limit over the scale $\varepsilon$ and $t$ in the weak case and the single limit over $t$ in the strong case of rescaled Mellin-type transforms of the exponential functional of both statistics as if the statistics were the centered gaussian free field or the gaussian free field plus an independent gaussian random variable, respectively. The exponential functional of the gaussian free field is an important object in mathematical physics known as the limit lognormal stochastic measure (or lognormal multiplicative chaos). By using a Girsanov-type result, we have found an appropriate rescaling factor to compute two Mellin-type transforms of the exponential functional of the centered field in terms of the Mellin transform of the exponential functional of the free field itself, \emph{i.e.} the Mellin transform of the total mass of the limit lognormal measure, resulting
in the conjecture for the first statistic. The conjecture for the second statistic follows directly from its convergence to
the gaussian free field plus an independent gaussian, the latter being responsible for rescaling. In both cases, the rescaling factors are determined by the asymptotic scale and the choice of the bump function that effect the smoothing. Finally, our conjectural knowledge of the distribution of the total mass of the measure has allowed us to calculate a number of quantities that are associated with the statistics exactly.

The principal difference between the weak and strong conjectures is their
informational content. Conjecture \ref{Mellinvar} alone can be thought of as a number theoretic re-formulation of Conjecture \ref{myresult} on the equality of  the Selberg integral distribution and the law of the total mass of the limit lognormal measure. It associates the Selberg integral distribution with the zeroes but does not contain any information about their distribution that is not already contained in the Bourgade-Kuan-Rodgers theorem. Indeed, the order of the $u$ integral and the $t$ limit can be interchanged at a finite $\varepsilon>0,$ resulting in the Mellin transform of the exponential functional of a gaussian field, which converges to the centered gaussian free field when $i=1$ or the sum of the gaussian free field plus an independent gaussian random variable when $i=2$ as $\varepsilon\rightarrow 0,$ so that the weak conjecture only requires the $t\rightarrow \infty$ limit of the $S^{(i)}_t(\mu,u,\varepsilon)$ statistic. 
On the other hand, Conjecture \ref{Mellin} combines the $\varepsilon \rightarrow 0$ and $t\rightarrow \infty$ limits into a single limit so that the order of the $u$ integral and the resulting limit can no longer be interchanged because the statistic $S^{(i)}_t(\mu,u),$ unlike $S^{(i)}_t(\mu,u,\varepsilon),$ converges to the centered gaussian free field when $i=1$ or the gaussian free field plus an independent gaussian when $i=2$ at our mesoscopic scale $\varepsilon(t)$ and so becomes singular as $t\rightarrow \infty.$ The strong conjecture assumes that deviations of $S^{(i)}_t(\mu,u)$ from its gaussian limit are negligible 
at large but finite $t,$ thereby providing some new information about the statistical distribution of the zeroes at finite $t.$ In particular, as the strong conjecture
fits into the framework of mod-gaussian convergence, the normality zone and precise deviations of tails of our exponential functionals can be computed from
the general theory and explicit knowledge of our limiting functions.  

We have provided a self-contained review of some of the key properties of the limit lognormal measure and the distribution of its total mass to make our work
accessible to a wider audience. In particular, we have covered the invariances of the gaussian free field, the multifractal law of the limit measure, the derivation of the law of its total mass by exact renormalization, and several characterizations of the Selberg integral distribution, which is believed to describe  
the law of the total mass. The Selberg integral distribution is a highly non-trivial, log-infinitely divisible probability distribution having the property that its positive integral moments are given by the Selberg integral of the same dimension as the order of the moment. We have reviewed both its analytic and probabilistic structures that are relevant to out calculations. 

We have provided a number of calculations that support our conjecture. 
Our calculations are universal as they apply to any asymptotically gaussian linear statistic having the covariance structure 
that is given by the Bourgade-Kuan-Rodgers formula. In particular, they apply to the GUE statistics of Fyodorov \emph{et. al.} \cite{FKS} and
provide a theoretical explanation, \emph{via} convergence to the gaussian free field, for why our results for the Bourgade-Kuan-Rodgers and GUE statistics are the same.
Our arguments are however not mathematically rigorous as we do not know how our statistics behave near their gaussian limits. Our assumption that their behavior in the limit can be used to do calculations near the limit in the strong case is the principal mathematical gap between the weak and strong conjectures that renders our use of the free fields in place of the statistics heuristic. We have quantified that the variances of 
our statistics should converge to their asymptotic limits
faster than $1/|\log\varepsilon(t)|$ for this assumption to be valid and explained heuristically why we expect this to be true.

In broad terms, on the one hand, our conjecture relates a limit of a statistic of Riemann zeroes with the Selberg integral and, more generally, the Selberg integral probability distribution and so associates a non-trivial, log-infinitely divisible distribution with the zeroes. On the other hand,
our conjecture implies that the limit lognormal measure can be modeled in terms
of the zeroes. As this measure appears naturally in various contexts that involve multifractality, we can speculate that there is a number theoretic interpretation of multifractal phenomena. In particular, a proof of (even the weak) the conjecture might lead to a number theoretic proof of the conjectured equality of the law of the total mass of the limit lognormal measure and the Selberg integral distribution.

We have interpreted our statistics as fluctuations of the smoothed error term in the zero counting function so that
our conjecture gives rescaled Mellin transforms of the exponential functional of these fluctuations. Aside from verification or disproof
of our conjecture, it would be quite interesting to see what properties of the error term follow from the conjecture and to
compute the rate of convergence of our statistics to their gaussian limits. We believe that our conjecture is only valid for
$1\ll\lambda(t)\ll \log t$ and breaks down for $\lambda(t)\thicksim 1$ due to the expected presence of yet-to-be-determined
arithmetic corrections. Finally, the appearance of the Selberg integral distribution 
in the work of Fyodorov and Keating \cite{YK} and in our paper begs the question of formulating a general
statement about logarithmic correlations in the statistical value distribution of $\log\zeta,$ which we believe will clarify 
the relationship between their conjecture and ours.

\section*{Acknowledgements}
The author wants to thank Jeffrey Kuan for a helpful correspondence relating to ref. \cite{BK}.
The author also wishes to express gratitude to Nickolas Simm for bringing ref. \cite{FKS} to our attention, pointing out
the analogy between the Bourgade-Kuan-Rodgers and GUE statistics, and simplifying our proof of Eq. (16). 
The author is grateful to the referees for many helpful suggestions.

\appendix
\section{Appendix}
\numberwithin{equation}{section} \setcounter{equation}{0} \noindent
In this section we will give a proof\footnote{This proof is due to Nickolas Simm. We originally established this result as a corollary of Eq. (17) in \cite{BK}. Upon 
seeing our derivation, Nickolas Simm found a much simpler proof, which is given here.} of \eqref{resultS} and a sketch of the proof of Lemma \ref{modified}.
The starting point is the second equation following Eq. (16) in \cite{BK},
\begin{equation}\label{secBK}
\sum\limits_\gamma \Im\Bigl(\frac{1}{\gamma(t)-(x+i y)}\Bigr) -\frac{1}{2}\frac{\log t}{\lambda(t)} = \frac{1}{\lambda(t)}\Re \frac{\zeta'}{\zeta}\Bigl(
\frac{1}{2}+\frac{y}{\lambda(t)} + i(\omega t+\frac{x}{\lambda(t)}\Bigr) + \frac{1}{\lambda(t)}\,O\Bigl(|\log(\omega+\frac{x}{t\lambda(t)}|\Bigr).
\end{equation}
Recalling the identity
\begin{equation}
\lim\limits_{y\rightarrow 0} \int f(x) \Im\Bigl(\frac{1}{\gamma-(x+iy)}\Bigr)\,dx = \pi \,f(\gamma),
\end{equation}
we multiply \eqref{secBK} by $f(x)/\pi,$ integrate with respect to $x,$ and let $y\rightarrow 0.$ We then obtain by \eqref{St}
\begin{align}
S_t(f) & = \frac{1}{\pi \lambda_t} \int\limits_\mathcal{\mathbb{R}} f(x)\ \,\Re \frac{\partial}{\partial s} \log\zeta(s)\vert_{s=\frac{1}{2}+
i\bigl(\omega t + \frac{x}{\lambda_t}\bigr)}\,dx + O\bigl(1/\lambda(t)\bigr), \\
& = \frac{1}{\pi} \int\limits_\mathcal{\mathbb{R}} f(x) \,\frac{\partial}{\partial x}\Im \Bigl[ \log\zeta\Bigl(\frac{1}{2}+
i\bigl(\omega t + \frac{x}{\lambda_t}\bigr)\Bigr)\Bigr]\,dx + O\bigl(1/\lambda(t)\bigr), \\
& = -\int\limits_\mathcal{\mathbb{R}} f'(x)\ \,S\Bigl(
\omega t + \frac{x}{\lambda_t}\Bigr)\,dx + O\bigl(1/\lambda(t)\bigr)
.
\end{align} 
It should be noted that $\Re(\zeta'/\zeta)(s)$ is integrable along the critical line due to Hadamard's factorization, which shows that the singular part
of $(\zeta'/\zeta)(s)$ near the zeroes is of the form $1/(s-\gamma)$ and so is purely imaginary along the critical line, hence vanishing upon taking
the real part. It is clear from \eqref{secBK} that the error term is indeed $O\bigl(1/\lambda(t)\bigr)$ provided $f(x)$ is compactly supported and the support is
independent of $t.$ If the support depends on $t$ as in \eqref{Sut}, provided its size$\ll t\lambda(t),$ the error is $O\bigl(||f||_1/\lambda(t)\bigr)$
as indicated in footnote 7.

The proof of Lemma \ref{modified} follows verbatim the proof of Theorem \ref{divergvar} given in
\cite{BK}. We will only indicate the main steps here and refer the reader to
\cite{BK} for all details, including definitions of special number theoretic functions that are needed in the proof.
\begin{proof} {\bf of Lemma \ref{modified}.} Let $\chi_u(x)=\chi_{[0,\,u]}(x),$ \emph{i.e.}
$i=1.$
Let $\{c_k\}$ and $\{u_k\}$ be fixed and denote 
\begin{equation}\label{ft}
f_t(x) \triangleq \Bigl(\sum c_k \chi_{u_k}\star\phi_{\varepsilon(t)}\Bigr)(x).
\end{equation}
Then, $f_t$ and $f_t'$ are bounded in $L^1$ uniformly in $t,$ 
and $f_t''$ satisfies
\begin{equation}
||f_t''||_1 = O(1/\varepsilon(t)).
\end{equation}
The Fourier transform of $f_t$ 
satisfies uniformly in $t$ as $w\rightarrow \infty$
\begin{equation}\label{fourierbounds}
w\,|\widehat{f_t}(w)|^2, \bigl(w\,|\widehat{f_t}(w)|^2\bigr)' = O(1/w).
\end{equation}
Then, by the approximate explicit formula of Bourgade and Kuan, cf. Proposition 3 in \cite{BK}, 
\begin{equation}\label{explicit}
S_t(f_t) = \frac{1}{\lambda(t)} \sum\limits_{n\geq 1} \frac{\Lambda_{\sqrt{t}}(n)}{\sqrt{n}}
\Bigl(\widehat{f_t}\Bigl(\frac{\log n}{\lambda(t)}\Bigr) n^{i\omega t} +
\text{cc}\Bigr) + \text{error term},
\end{equation}
where $\Lambda_{u}(n)$ denotes Selberg's smoothed von Mangoldt function.
In our case, as in the case of Theorem \ref{divergvar}, the error term is of the 
order $O(\lambda(t) /\log t \,\varepsilon(t)).$
Let $\sigma_t$ be as in \eqref{mylimitvar} and denote 
\begin{equation}
Y_t \triangleq \frac{\sqrt{2}}{\sigma_t\lambda(t)} \sum\limits_{\text{primes}\; p} \frac{\Lambda_{\sqrt{t}}(p)}{\sqrt{p}}
\widehat{f_t}\Bigl(\frac{\log p}{\lambda(t)}\Bigr) p^{i\omega t}.
\end{equation}
Then, by \eqref{explicit} and Lemma 4 of \cite{BK}, 
\begin{equation}\label{Stft}
S_t(f_t) = \frac{\sigma_t}{\sqrt{2}} \Bigl(Y_t+\overline{Y_t}\Bigr) + O\Bigl(\frac{\lambda(t)}{\log t\,\varepsilon(t)}\Bigr) + o(1)
\end{equation} 
in the limit $t\rightarrow \infty.$
Finally, by Lemma 5 and Proposition 6 of \cite{BK}, $Y_t$ converges to the standard complex normal variable
\begin{equation}
Y_t\rightarrow \frac{\mathcal{N}(0, 1)+ i\mathcal{N}(0, 1)}{\sqrt{2}},
\end{equation}
The result for $i=1$ follows.

Now, let $\chi_u(x)=\chi_{[-1/\varepsilon(t),\,u]}(x)$ and $f_t$ be as in \eqref{ft}. The argument given above still goes through 
but requires somewhat more delicate estimates. We have for $n=0, 1, 2,$
\begin{align}
||f^{(n)}_t||_1 & = O(1/\varepsilon(t)), \label{l1} \\
||x\,\log x\,f^{(n)}_t||_1 & = O(\log(1/\varepsilon(t))/\varepsilon^2_t),
\end{align}
so that \eqref{explicit} still holds with the error term of the 
order $O(\lambda(t) /\log t \,\varepsilon(t)).$ To verify \eqref{Stft} we need Lemma 4 of \cite{BK}, which requires
that $||f_t||_1$ be uniformly bounded, whereas we have \eqref{l1} instead. However, a careful reading of the proof of
Lemma 4 indicates that it still holds provided
\begin{equation}
\frac{||f_t||_1}{\lambda(t)}=o(1),
\end{equation}
which in our case translates into
\begin{equation}
\frac{1}{\varepsilon(t)\,\lambda(t)}=o(1).
\end{equation}
Thus, \eqref{Stft} holds given the condition in \eqref{vart2}. Similarly, Lemma 5 of \cite{BK} requires 
the bounds in \eqref{fourierbounds}, whereas we have instead
\begin{align}
w|\widehat{f_t}(w)|^2 & = O(1/w), \\
\bigl(w|\widehat{f_t}(w)|^2\bigr)' & = O(1/w\,\varepsilon(t)). \label{four2}
\end{align}
Once again, a careful reading of the proof of Lemma 5 indicates that the bound in \eqref{four2} is sufficient provided
$1/\varepsilon(t)\,\lambda(t)=o(1)$ as in \eqref{vart2}. The rest of the argument goes though verbatim.
\qed 
\end{proof}

\end{document}